\documentclass{article}

\usepackage[english]{babel}
\usepackage[a4paper,top=2cm,bottom=2cm,left=3cm,right=3cm,marginparwidth=1.75cm]{geometry}
\usepackage{amsmath,amssymb,amsthm,mathtools}
\usepackage{graphicx}
\usepackage{tikz}
\usetikzlibrary{arrows.meta}
\usepackage{indentfirst}
\usepackage{needspace}
\usepackage{setspace}
\usepackage{titlesec}
\usepackage[colorlinks=true,allcolors=blue]{hyperref}

\titlelabel{\thetitle.~}

\makeatletter
\renewenvironment{proof}[1][\proofname]{\par
  \pushQED{\qed}%
  \normalfont \topsep6\p@\@plus6\p@\relax
  \trivlist
  \item[\hskip\labelsep
        \bfseries
    #1\@addpunct{.}]\ignorespaces
}{%
  \popQED\endtrivlist\@endpefalse
}
\makeatother

\newtheorem{theorem}{Theorem}[section]
\newtheorem{prop}[theorem]{Proposition}
\newtheorem{lemma}[theorem]{Lemma}
\newtheorem{definition}[theorem]{Definition}
\newtheorem{corollary}[theorem]{Corollary}
\newtheorem{remark}[theorem]{Remark}
\newtheorem{example}[theorem]{Example}
\newtheorem{question}[theorem]{Question}
\newtheorem*{searchstrategy}{Search Strategy}

\numberwithin{equation}{section}

\DeclareMathOperator{\mDer}{\mathrm Der}
\DeclareMathOperator{\mAut}{\mathrm Aut}
\newcommand{\mG}{\mathcal{G}}
\DeclareMathOperator{\id}{\mathrm id}
\DeclareMathOperator{\mm}{\mathfrak{m}}
\DeclareMathOperator{\mF}{\mathrm F}
\DeclareMathOperator{\mJ}{\mathrm J}
\DeclareMathOperator{\rad}{Rad}

\hypersetup{pdftitle={Finite determinacy and rigidity of quiver potentials},pdfsubject={Finite determinacy of quiver potentials and rigidity}}

\makeatletter
\newcommand{\subjclass}[2][1991]{%
	\let\@oldtitle\@title%
	\gdef\@title{\@oldtitle\footnotetext{#1 \emph{Mathematics Subject Classification.} #2}}%
}
\newcommand{\keywords}[1]{%
	\let\@@oldtitle\@title%
	\gdef\@title{\@@oldtitle\footnotetext{\emph{Key words and phrases.} #1.}}%
}
\makeatother

\title{Finite determinacy and rigidity of quiver potentials}

\author{Zheng Hua, Guisong Zhou}
\date{}

\newcommand{\Addresses}{{%
		\bigskip
		\footnotesize
		
		Zheng Hua, \textsc{Department of Mathematics, The University of Hong Kong, Hong Kong.}\par
		\textit{E-mail address}: 
        \texttt{zheng.hua.zju@gmail.com}
		
		\medskip
		
		Guisong Zhou, \textsc{School of Mathematics and Statistics, Ningbo University, Ningbo, China}\par
		\textit{E-mail address}: \texttt{zhouguisong@nbu.edu.cn}
}}

\subjclass[2020]{Primary 14A22; Secondary 16S38, 16E40, 58K40}
\keywords{quiver potentials; finite determinacy; Jacobian algebras; rigid potentials; weighted-homogeneous potentials}

\begin{document}
\maketitle
\begin{abstract}
We prove that a quiver potential over a field of characteristic zero
is finitely determined if and only if the trace space of its Jacobian
algebra is finite dimensional. In particular, Jacobi-finite potentials
are finitely determined. We further prove that rigid potentials are
finitely determined and, over an algebraically closed field, unique
up to right equivalence on a fixed quiver. For finitely determined  weighted-homogeneous potentials, we give an explicit bound on the
jet order sufficient for finite determinacy in terms of the arrow
weights, the weight of the potential, and the highest nonzero weight
of the trace space of its Jacobian algebra. We also establish new invariance results for weighted-homogeneous potentials under right equivalence.
\end{abstract}

\section{Introduction}
This paper continues our study of quiver potentials in
\cite{HZ21,HZ23}, focusing on finite determinacy and rigidity over any fields of characteristic zero.
A quiver potential is a formal linear combination of cyclic paths,
and right equivalence allows changes of arrow coordinates in the
complete path algebra. Such a potential may have infinitely many
nonzero terms, while its finite jets retain only the terms of bounded
path length. A natural question is therefore whether a sufficiently
high jet already determines the potential up to right equivalence.
This is the problem of \emph{finite determinacy}.

The problem is familiar in commutative singularity theory. For quiver
potentials, however, right equivalence acts on cyclic classes, since
potentials are considered modulo commutators. Our main result shows
that finite determinacy is characterized by the finite dimensionality
of the trace space of the Jacobian algebra, rather than by that of
the Jacobian algebra itself. We also study rigid potentials and
explicit jet orders sufficient for finite determinacy of weighted-homogeneous potentials.

Throughout, $\Bbbk$ is a field of characteristic zero and $Q$
is a finite quiver. For an algebra $A$, write
$\operatorname{Tr}(A)=A/[A,A]$ for its trace space and
$\pi_A:A\to\operatorname{Tr}(A)$ for the canonical projection,
where $[A,A]$ denotes the linear span of commutators.
Let $\mF=\Bbbk\langle\langle Q\rangle\rangle$ be the complete
path algebra, with arrow ideal $\mm$ and vertex subalgebra
$\ell=\Bbbk Q_0$. We abbreviate $\pi_{\mF}$ to $\pi$.
With its $\mm$-adic topology, $\mF$ is a complete topological
algebra, and $[\mF,\mF]$ is closed by
Corollary~\ref{cor:closed-commutators}.
We consider potentials $\Phi\in\pi(\mm^2)$. The
\emph{Jacobian algebra} of $\Phi$ is the quotient algebra
$\Lambda(\Phi)=\mF/\mJ(\Phi)$, equipped with the quotient
topology. Here, $\mJ(\Phi)$ is the \emph{Jacobian ideal} of $\Phi$,
which is the closure of the two-sided ideal generated by the cyclic
derivatives of $\Phi$. We call $\Phi$ \emph{Jacobi-finite} (resp.\ \emph{trace-finite}) if $\Lambda(\Phi)$ (resp.\ $\operatorname{Tr}(\Lambda(\Phi))$) is finite dimensional over $\Bbbk$. Two potentials
are \emph{right equivalent} if one is carried to the other by
an $\ell$-unital automorphism $\alpha$ of $\mF$, acting on the
trace space by $\pi(f)\mapsto\pi(\alpha(f))$. A potential
$\Phi$ is \emph{finitely determined} if it is $N$-determined for
some $N\geq0$, namely, if every potential $\Psi$ satisfying
$\Psi-\Phi\in\pi(\mm^{N+1})$ is right equivalent to $\Phi$.

\begin{theorem}[Theorem~\ref{thm:trace-criterion}]\label{thm:main-criterion}
A potential $\Phi\in\pi(\mm^2)$ is finitely determined if and only if
it is trace-finite.
\end{theorem}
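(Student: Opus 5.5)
The plan is to follow the classical Mather-type scheme of singularity theory, with the local algebra replaced by the trace space. The infinitesimal orbit of $\Phi$ under right equivalence will be identified with $\pi(\mJ(\Phi)\cdot\mm)$-type subspaces of $\operatorname{Tr}(\mF)$. An $\ell$-derivation $\delta$ of $\mF$ with $\delta(a)=h_a$ acts by
\[
\delta(\Phi)=\pi\Bigl(\sum_{a\in Q_1} h_a\,\partial_a\Phi\Bigr),
\]
where $\partial_a$ is the cyclic derivative. Hence the tangent space to the orbit is $\pi(\mm\,\mJ_0)$, where $\mJ_0$ is the closed span of the products $h\,\partial_a\Phi$ with $h\in\mm$ (up to path-compatibility with the vertex idempotents). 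Since $\pi(x\,y)=\pi(y\,x)$, the closure of $\pi(\mF\,\partial\Phi\,\mF)$ equals $\pi(\mJ(\Phi))$. Consequently
\[
\operatorname{Tr}(\Lambda(\Phi))\cong \operatorname{Tr}(\mF)/\pi(\mJ(\Phi)),
\]
using that $[\mF,\mF]$ is closed (Corollary~\ref{cor:closed-commutators}). This last identification is the bridge: trace-finiteness says exactly that the closure of the "tangent space to the orbit" has finite codimension in the space of potentials.

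\emph{Trace-finite $\Rightarrow$ finitely determined.} First I will show that finite codimension of $\pi(\mJ(\Phi))$ in $\operatorname{Tr}(\mF)$ forces $\pi(\mm^{k})\subseteq\pi(\mJ(\Phi))$ for some $k$. I plan to do this with a Nakayama/Artin–Rees argument on the filtration induced by $\mm$: the images of $\pi(\mm^k)$ in the finite-dimensional quotient must stabilize, and completeness then gives containment. I will then bootstrap to $\pi(\mm^{N+1})\subseteq\pi(\mm\,\mJ(\Phi)\,\text{-part with } h\in\mm^2)$ for $N$ large. Multiplying the generators by further arrows costs only a fixed shift, and the identity $\pi(u\,\partial_a\Phi\,v)=\pi(vu\,\partial_a\Phi)$ lets me write every trace element as a combination of the $\pi(h_a\partial_a\Phi)$.

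Next comes a homotopy (Moser) argument. Put $\Phi_t=\Phi+t(\Psi-\Phi)$ for $\Psi-\Phi\in\pi(\mm^{N+1})$. Because $\partial\Phi_t\equiv\partial\Phi$ modulo high powers of $\mm$, the containment persists uniformly in $t$. I will therefore solve $\Psi-\Phi=\pi(\sum_a h_a(t)\,\partial_a\Phi_t)$ with $h_a(t)\in\mm^2$, and integrate the resulting vector field. Since integrating a time-dependent flow in formal power series is delicate, I would instead replace it by a successive-approximation scheme: at step $n$, find an automorphism $\alpha_n=\id+\delta_n$ with $\delta_n(\mm)\subseteq\mm^{n+1}$ that kills the discrepancy modulo $\pi(\mm^{n+2})$, and take the convergent infinite composition. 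In characteristic zero, $\exp$ of a pro-nilpotent derivation makes each step an honest automorphism.

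\emph{Finitely determined $\Rightarrow$ trace-finite.} Suppose $\Phi$ is $N$-determined. I expect to show that the orbit of $\Phi$ contains $\Phi+\pi(\mm^{N+1})$. Passing to jets, this says that the orbit map of the finite-dimensional unipotent-extended group $\mG_k$ (automorphisms mod $\mm^{k}$) onto the affine space of $k$-jets has an open orbit containing a full translate of $\pi(\mm^{N+1})/\pi(\mm^k)$. A dimension count on the orbit map's differential then gives $\pi(\mm^{N+1})\subseteq\pi(\mJ(\Phi))+\pi(\mm^k)$ for all $k$, and closedness yields $\pi(\mm^{N+1})\subseteq\pi(\mJ(\Phi))$. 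Since $\operatorname{Tr}(\mF)/\pi(\mm^{N+1})$ is finite dimensional, this gives trace-finiteness. To avoid algebraic-group tools over a non-closed field, I would base-change to $\bar\Bbbk$, using that the dimensions are unchanged, and apply generic smoothness of the orbit map in characteristic zero.

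The main obstacle is the first step of the forward direction: passing from finite codimension of $\pi(\mJ(\Phi))$ to containment of a whole $\pi(\mm^{k})$ in the \emph{restricted} tangent space, where the coefficients $h_a$ lie in $\mm^2$. The difficulty is that there is no module structure on $\operatorname{Tr}$. I will first try to use the $Z(\mF)$-like action given by multiplying by cycles at vertices, together with the cyclic rotation identity, to mimic Nakayama's lemma.
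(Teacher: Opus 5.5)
\textbf{There is a genuine gap.} Your necessity direction is essentially the paper's argument: orbits of the jet groups on finite jet spaces, the orbit tangent space $q_n(\pi(\mm\mJ(\Phi)))$, and Zariski density of the $\Bbbk$-points of the affine translate, which already makes base change to $\bar{\Bbbk}$ unnecessary. But both directions rest on a fact you never establish: $\pi(\mm^r\mJ(\Phi))$ is closed in $\operatorname{Tr}(\mF)$ for every $r\geq0$.

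You need this fact twice. First, to pass from $\pi(\mm^{N+1})\subseteq\pi(\mJ(\Phi))+\pi(\mm^{k})$ for all $k$ to $\pi(\mm^{N+1})\subseteq\pi(\mJ(\Phi))$. Second, to pass from finite codimension of $\pi(\mJ(\Phi))$ to containment of some $\pi(\mm^k)$. Your identification of the closure of $\pi(\mF\,\partial\Phi\,\mF)$ with $\pi(\mJ(\Phi))$ silently assumes it. Completeness does not supply it, since a continuous linear image of a complete space need not be closed. What does supply it is linear compactness: $\pi(\mm^r\mJ(\Phi))=\Phi_\#(\mDer^r_\ell(\mF))$, where $\mDer^r_\ell(\mF)\cong\prod_a e_{t(a)}\mm^r e_{s(a)}$ is pseudo-compact, and continuous linear maps between PC spaces have closed images (Lemma~\ref{lem:cycimage}, Proposition~\ref{prop:pc}).

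The same idea resolves the obstacle you flag in the sufficiency direction. Your proposed Nakayama substitute cannot work: left multiplication by a cycle does not preserve $[\mF,\mF]$, so it induces no action on $\operatorname{Tr}(\mF)$. For instance, with two loops, $\pi([y,xy])=0$ but $\pi(x[y,xy])=\pi(xyxy)-\pi(x^2y^2)\neq0$.

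Instead, note that $\pi(\mJ(\Phi))/\pi(\mm^p\mJ(\Phi))$ is a quotient of the finite-dimensional space $\mDer_\ell(\mF)/\mDer^p_\ell(\mF)$. So if $\Phi$ is trace-finite, each $\pi(\mm^p\mJ(\Phi))$ with $p\geq1$ is closed of finite codimension in $\pi(\mm^2)$. Hence it is open (Proposition~\ref{prop:pc-open}), and so contains some $\pi(\mm^{n_p})$. The case $p=2$ is the inclusion $\pi(\mm^{N+1})\subseteq\pi(\mm^2\mJ(\Phi))$ you wanted.

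The full threshold sequence $(n_p)$ is what your successive approximation is missing. Convergence of the infinite composition (Lemma~\ref{lem:products}) requires correcting derivations whose depths tend to infinity. The depth cannot be tied to the order of the discrepancy as you propose: if $\delta_n(\mm)\subseteq\mm^{n+1}$, then $\delta_n(\Phi)\in\pi(\mm^{n+2})$, which cannot cancel a discrepancy of length $n+1$. With the thresholds in hand, solve $\delta_i(\Phi)=\Theta^{(i)}$ with $\delta_i\in\mDer^{p_i}_\ell(\mF)$, where $p_i=\max\{p\geq2:n_p\leq\operatorname{ord}\Theta^{(i)}\}$. Then apply $\exp(-\delta_i)$ as in Lemma~\ref{lem:correction} and Proposition~\ref{thm:one-inclusion}; no Moser homotopy is needed.
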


In commutative singularity theory, a formal power series
$f\in\Bbbk[[x_1,\ldots,x_m]]$ of order at least two is finitely
right determined precisely when its Milnor algebra, the quotient
by the ideal of partial derivatives, is finite dimensional
\cite[Theorem~3.1.9]{GLS25}. The trace space of a commutative
algebra is simply the algebra itself, so
Theorem~\ref{thm:main-criterion} is a noncommutative analogue of
this classical criterion. Its distinction from the usual
Jacobi-finiteness assumption is important: a Jacobian algebra can
be much larger than its quotient by commutators. Our criterion
measures only the cyclic information needed for finite
determinacy.

The two implications in Theorem~\ref{thm:main-criterion} are proved
by different methods. For necessity, $N$-determinacy implies that, in
each finite jet space of order $n\geq N$, the affine space of
perturbations of $\Phi^{[n]}$ with the same $N$-jet lies in the
closure of its right-equivalence orbit. The tangent-space formula
for this orbit, together with the closedness of
$\pi(\mm\mJ(\Phi))$ in $\operatorname{Tr}(\mF)$, then yields
$\pi(\mm^{N+1})\subseteq\pi(\mm\mJ(\Phi))$.
By Proposition~\ref{prop:trace-criteria}, this implies that
$\Phi$ is trace-finite.
The Zariski density of $\Bbbk$-rational points in affine spaces
makes this argument valid over any field of characteristic zero,
without assuming algebraic closedness. For sufficiency, we consider
the cyclic tangent map
$\Phi_\#: \mDer_\ell(\mF)\to\operatorname{Tr}(\mF)$,
$\delta\mapsto\delta(\Phi)$, induced by the action of derivations
on cyclic classes. This is a continuous linear map between
pseudo-compact vector spaces, and its image is closed. The open
mapping theorem then allows us to eliminate higher-order terms
successively under a single inclusion
(Proposition~\ref{thm:one-inclusion}).

The criterion also has several consequences.
Since right equivalence preserves the Jacobian algebra, finite
determinacy depends only on its topological isomorphism class;
it is also preserved and reflected by arbitrary extensions of the
ground field (Corollary~\ref{cor:scalar-extension}). In
particular, Jacobi-finite potentials are finitely determined
(Corollary~\ref{cor:jacobi-fd}). This recovers over every
characteristic-zero field the result obtained by Hua and Zhou
over $\mathbb C$ using a Moser-type argument
\cite[Theorem~3.16]{HZ23}.

A particularly natural application concerns \emph{rigid}
potentials. A potential $\Phi\in\pi(\mm^2)$ is rigid if
$\pi(\mm)=\pi(\mJ(\Phi))$, or equivalently if the trace space
of $\Lambda(\Phi)$ is spanned by the vertex classes. This is the rigidity condition of Derksen, Weyman and Zelevinsky
\cite[Definition~6.10]{DWZ08}. In particular, every rigid potential is trace-finite.
We obtain the following result.

\begin{theorem}[Theorem~\ref{thm:rigid-fd}]\label{thm:main-rigid}
\begin{enumerate}
\item Every rigid potential on $Q$ is finitely determined.
\item If $\Bbbk$ is algebraically closed, any two rigid potentials on
$Q$ are right equivalent.
\end{enumerate}
\end{theorem}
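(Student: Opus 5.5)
Part (1) follows directly from Theorem~\ref{thm:main-criterion}. If $\Phi$ is rigid, then $\pi(\mm)=\pi(\mJ(\Phi))$. Since $\mF=\ell\oplus\mm$, the trace space $\operatorname{Tr}(\Lambda(\Phi))=\operatorname{Tr}(\mF)/\pi(\mJ(\Phi))$ is spanned by the images of the vertex idempotents. It therefore has dimension at most $|Q_0|$, so $\Phi$ is trace-finite and hence finitely determined. The one point to check is that $\operatorname{Tr}(\Lambda(\Phi))$ really equals $\pi(\mF)/\pi(\mJ(\Phi))$. This holds because $[\Lambda,\Lambda]$ is the image of $[\mF,\mF]$ and $[\mF,\mF]$ is closed (Corollary~\ref{cor:closed-commutators}).

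For part (2), I would run a Zariski-density argument in a finite jet space. Let $\Phi,\Psi$ be rigid, and choose $N$ so that both are $N$-determined; this is possible by (1), taking the larger of the two orders. It then suffices to show that the jets $\Phi^{[N]}$ and $\Psi^{[N]}$ are right equivalent by an automorphism of the finite-dimensional algebraic group $G_N$ of $N$-jets of $\ell$-unital automorphisms. Such an automorphism lifts to $\mF$ and carries $\Phi$ to a potential with the same $N$-jet as $\Psi$, and $N$-determinacy of $\Psi$ then finishes. So consider the affine space $V_N=\pi(\mm^2)/\pi(\mm^{N+1})$ with its $G_N$-action.

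The key step is to show that the orbit of the jet of any rigid potential is Zariski open in $V_N$. The tangent space to the orbit at $\Phi^{[N]}$ is the image of the cyclic tangent map $\Phi_\#$ restricted to derivations preserving $\mm$, modulo $\pi(\mm^{N+1})$. By the tangent-space formula used earlier, this is $\pi(\mm\mJ(\Phi))+\pi(\mm^{N+1})$, modulo $\pi(\mm^{N+1})$. I must show this equals all of $\pi(\mm^2)/\pi(\mm^{N+1})$, that is, $\pi(\mm^2)\subseteq\pi(\mm\mJ(\Phi))+\pi(\mm^{N+1})$. Rigidity gives $\pi(\mm)=\pi(\mJ(\Phi))$, and $\mJ(\Phi)\subseteq \mm$ since $\Phi\in\pi(\mm^2)$. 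I expect this upgrade from $\mJ$ to $\mm\mJ$ in degree $\ge 2$ to be the main obstacle.

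To handle it, I would compare degrees. An element of $\pi(\mm^2)$ can be written as $\pi(j)$ with $j\in\mJ(\Phi)$. Write $j=\sum u\,\partial_a\Phi\,v$; cyclically, $\pi(j)=\sum\pi(vu\,\partial_a\Phi)$. The terms whose coefficient $vu$ lies in $\mm$ are already in $\pi(\mm\mJ)$. The remaining terms are $\ell$-linear combinations $\sum c_a\pi(e\,\partial_a\Phi)$ with $c_a$ scalars at vertices, i.e.\ $\pi(\partial_a\Phi)$ for cycles $a$ (loops). These satisfy $\pi(a\,\partial_a\Phi)$-type Euler relations only in the homogeneous case, so instead I would argue directly: a class $\sum c_a\pi(\partial_a\Phi)$ with $c_a\in\Bbbk$ has a linear part $\sum c_a\pi(\partial_a\Phi_2)$ coming from the quadratic part $\Phi_2$. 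For the original element to lie in $\pi(\mm^2)$, this linear part must vanish modulo the degree-$\geq2$ contributions. I would then use induction on degree together with the openness/closedness results (Proposition~\ref{thm:one-inclusion} style successive elimination) to absorb these terms into $\pi(\mm\mJ)$. If this fails in general, the fallback is to replace $G_N$-orbits of $\ell$-unital automorphisms by the larger orbits including the linear part, and to use only that the tangent space equals $\pi(\mJ)$-image, which is everything by rigidity.

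Once every rigid orbit is a nonempty Zariski-open subset of the irreducible variety $V_N$ (over algebraically closed $\Bbbk$), any two such orbits intersect. Hence $\Phi^{[N]}$ and $\Psi^{[N]}$ lie in the same orbit, and the conclusion follows as described above.
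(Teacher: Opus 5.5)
The overall plan matches the paper: part (1); then, for part (2), the orbit tangent space $q_N(\pi(\mm\mJ(\Phi)))$, openness of the orbit in the irreducible space $\pi(\mm^2)/\pi(\mm^{N+1})$, intersection of two open orbits over algebraically closed $\Bbbk$, lifting, and $N$-determinacy.

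\textbf{The gap.} It is the step you flag yourself: you never prove $\pi(\mm^2)\subseteq\pi(\mm\mJ(\Phi))$ for rigid $\Phi$. You correctly reduce to an element $\sum_c\kappa_c\,\pi(\partial_c\Phi)$, with $c$ running over loops and $\kappa_c\in\Bbbk$. This element lies in $\pi(\mm^2)$, so its linear part vanishes. You then propose to absorb it into $\pi(\mm\mJ(\Phi))$ by induction on degree. No such absorption holds in general. For $\Phi=\pi(x^3)$ on the one-loop quiver, $\pi(\partial_x\Phi)=3\pi(x^2)$ lies in $\pi(\mm^2)\cap\pi(\mJ(\Phi))$ but not in $\pi(\mm\mJ(\Phi))=\pi(x^3\Bbbk[[x]])$. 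What is needed is that rigidity forces every $\kappa_c=0$. Your argument uses rigidity only once, to get $\pi(\mm^2)\subseteq\pi(\mJ(\Phi))$.

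\textbf{The fallback fails.} The extra tangent directions in $\pi(\mJ(\Phi))$ come from derivations sending a loop to a vertex idempotent. These do not integrate to right equivalences, because every $\ell$-unital endomorphism of $\mF$ preserves $\mm$ (Lemma~\ref{lem:inverse}). They do not even descend to derivations of $\mF^{[N]}$, since $x^{N+1}\mapsto(N+1)x^N$. So the orbit tangent space really is $q_N(\pi(\mm\mJ(\Phi)))$, as in Lemma~\ref{lem:jet-orbit}.

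\textbf{The missing idea.} It is a dimension count on linear parts.
\begin{enumerate}
\item The space $\pi(\mm)/\pi(\mm^2)$ has the loop classes as a basis, since nonloop arrows have zero cyclic class. Its dimension is therefore the number of loops.
\item The quotient $\mDer_\ell(\mF)/\mDer_\ell^1(\mF)$ records only constant values on loops. So Lemma~\ref{lem:cycimage} gives $\pi(\mJ(\Phi))=\pi(\mm\mJ(\Phi))+\operatorname{span}_\Bbbk\{\pi(\partial_c\Phi)\}_{c\text{ loop}}$, with $\pi(\mm\mJ(\Phi))\subseteq\pi(\mm^2)$.
\item Rigidity therefore says that the images of the $\pi(\partial_c\Phi)$ span $\pi(\mm)/\pi(\mm^2)$. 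There are exactly as many of them as loops, so they are linearly independent there.
\item Hence every $\kappa_c$ in your decomposition vanishes, and $\pi(\mm^2)=\pi(\mm\mJ(\Phi))$.
\end{enumerate}
With this inserted, the rest of your argument goes through.

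The paper obtains the same equality (Lemma~\ref{prop:rigid-tangent}) differently, via the splitting Lemma~\ref{lem:quadratic-splitting}. After a right equivalence, every loop is some $x_j$ with $\partial_{x_j}\Phi=2\lambda_jx_j$. The constant coefficients then vanish by linear independence of the loop classes.
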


Example~\ref{ex:real-signatures} shows that the algebraically
closed hypothesis cannot be dropped in general.
The uniqueness statement is especially striking because it
concerns all rigid potentials on a fixed quiver, not merely
those obtained from a particular construction. Rigid potentials
occur in substantial families: an acyclic quiver has a rigid zero
potential, and rigidity is preserved by quiver-with-potential
mutation, so the same holds for every quiver mutation equivalent
to an acyclic one \cite[Corollary~6.11]{DWZ08}.
Labardini-Fragoso also proved rigidity for the potentials
associated with ideal triangulations of marked surfaces with
nonempty boundary \cite{La09}. On the other hand, some quivers
admit no rigid potential; an example is the double cyclic
triangle of \cite[Example~8.6]{DWZ08}.
In the theory of cluster algebras, the choice of a potential is necessary for categorification. Compared to nondegenerate potentials—which always exist but are highly non-unique—rigid potentials are only expected to exist in certain cases, but they provide a canonical choice by the above theorem.

\begin{question}\label{q:jacobi-trace}
\begin{enumerate}
\item Is every rigid potential Jacobi-finite?
\item Does there exist a trace-finite potential
that is not Jacobi-finite?
\end{enumerate}
\end{question}

A negative answer to part~(1) would provide an example
for part~(2), and hence a finitely determined potential that is
not Jacobi-finite.

The qualitative finite-determinacy criterion does not by
itself specify how many terms of a potential are sufficient.
For weighted-homogeneous potentials, the grading gives additional
information that can be used to obtain an explicit bound. Assume $Q_1\ne\varnothing$. We assign positive integer
weights $\zeta=(\zeta_a)_{a\in Q_1}$ to the arrows and set
$\underline\zeta=\min_{a\in Q_1}\zeta_a$.
A potential is $\zeta$-homogeneous if its nonzero cyclic
terms have a common $\zeta$-weight. We write $d_\zeta(\Phi)$
for this weight, with the convention $d_\zeta(0)=-\infty$.
\enlargethispage{\baselineskip}% keep the decomposition sentence together
For such a potential $\Phi$, its Jacobian algebra and trace space have natural decompositions
$\Lambda(\Phi)=\prod_{d\geq0}\Lambda(\Phi)_d$ and
$\operatorname{Tr}(\Lambda(\Phi))=\prod_{d\geq0}\operatorname{Tr}(\Lambda(\Phi))_d$,
respectively, inherited from the completed $\zeta$-weight grading on $\mF$.
Their Hilbert series are then defined by
\[
H_{\zeta,\Phi}(t)=\sum_{d\geq0}\dim_\Bbbk\Lambda(\Phi)_d\,t^d,
\qquad
T_{\zeta,\Phi}(t)=\sum_{d\geq0}\dim_\Bbbk\operatorname{Tr}(\Lambda(\Phi))_d\,t^d.
\]
Neither series requires a finiteness assumption on $\Phi$. For trace-finite
$\Phi$, set
\[
 a_\zeta^{\mathrm{tr}}(\Phi)
 =\deg T_{\zeta,\Phi}(t)
 =\max\{d\geq0:\operatorname{Tr}(\Lambda(\Phi))_d\ne0\}.
\]

\begin{theorem}[Theorem~\ref{thm:weighted-general}]\label{thm:main-weighted}
Let $\Phi\in\pi(\mm^2)$ be $\zeta$-homogeneous and trace-finite.
Then $\Phi$ is $N$-determined for
\[
 N=\left\lfloor\frac{\max\{d_\zeta(\Phi),a_\zeta^{\mathrm{tr}}(\Phi)\}}
                       {\underline\zeta}\right\rfloor.
\]
\end{theorem}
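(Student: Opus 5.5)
The plan is to reduce to the single-inclusion criterion, Proposition~\ref{thm:one-inclusion}, which we expect to say roughly: if $\pi(\mm^{N+1})\subseteq\pi(\mm\,\mJ(\Phi))$ (or an inclusion of this shape involving $\mm^2\mJ(\Phi)$, depending on its exact form), then $\Phi$ is $N$-determined. So the whole task is to show that, with $N$ as in the statement, every cyclic class of path length $\geq N+1$ lies in $\pi(\mm\mJ(\Phi))$, or in whatever ideal the proposition requires. We work with the completed $\zeta$-weight grading. A path of length $\ell\geq N+1$ has $\zeta$-weight at least $\ell\underline\zeta\geq (N+1)\underline\zeta>\max\{d_\zeta(\Phi),a^{\mathrm{tr}}_\zeta(\Phi)\}$ by the choice of $N$ as a floor. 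Hence it suffices to prove that every homogeneous cyclic class of weight $d>D:=\max\{d_\zeta(\Phi),a^{\mathrm{tr}}_\zeta(\Phi)\}$ lies in $\pi(\mm\mJ(\Phi))$. Closedness of $\pi(\mm\mJ(\Phi))$, established earlier for the necessity direction, then passes this to the infinite products that make up $\pi(\mm^{N+1})$.

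The key steps are as follows. First, since $d>a^{\mathrm{tr}}_\zeta$, we have $\operatorname{Tr}(\Lambda(\Phi))_d=0$, so $\pi(\mF_d)\subseteq\pi(\mJ(\Phi))_d$. Homogeneity of $\Phi$ makes $\mJ(\Phi)$ a graded closed ideal, generated by the cyclic derivatives $\partial_a\Phi$ of weight $d_\zeta(\Phi)-\zeta_a$. Second, upgrade from $\mJ$ to $\mm\mJ$ via the Euler relation. For weighted-homogeneous $\Phi$ the Euler derivation $E(a)=\zeta_a a$ gives
\[
d_\zeta(\Phi)\,\Phi=\pi\Bigl(\sum_a\zeta_a\,a\,\partial_a\Phi\Bigr),
\]
so $\Phi\in\pi(\mm\mJ(\Phi))$. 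An element of $\pi(\mJ)_d$ is a sum of classes $\pi(u\,\partial_a\Phi\,v)=\pi(vu\,\partial_a\Phi)$. If $vu\in\mm$, the term already lies in $\pi(\mm\mJ)$. Otherwise $vu$ is a vertex idempotent, and the term has weight $d_\zeta(\Phi)-\zeta_a<d_\zeta(\Phi)<d$, which is impossible in weight $d$. Therefore $\pi(\mJ(\Phi))_d\subseteq\pi(\mm\mJ(\Phi))$ for all $d>d_\zeta(\Phi)$, so only $d>D$ is needed.

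If instead Proposition~\ref{thm:one-inclusion} demands the stronger inclusion into $\pi(\mm^2\mJ(\Phi))$, or the image of the cyclic tangent map $\Phi_\#$ restricted to derivations raising order, we would repeat the same weight count. Writing $\pi(w\,\partial_a\Phi)$ with $w$ homogeneous of weight $d-d_\zeta(\Phi)+\zeta_a$, the factor $w$ is a derivation coefficient $\delta(a)=w$ with $\delta(\Phi)=\pi(w\partial_a\Phi)$. Its weight exceeds $\zeta_a$ exactly when $d>d_\zeta(\Phi)$, so $\delta$ strictly raises weight, and this is what the successive-elimination argument needs.

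The main obstacle is matching the exact hypothesis of Proposition~\ref{thm:one-inclusion}, which controls how the tangent space (involving $\mm\mJ$ versus image of $\Phi_\#$ on weight-increasing derivations) relates to the filtration by path length rather than $\zeta$-weight. The inequality $\ell\underline\zeta\le$ weight goes only one way, so we must be careful that the converted derivations land in the correct $\mm$-adic filtration level. We would first try to show directly that they increase $\mm$-order, using $\zeta$-weight bounds on $w$. If that fails, we would rerun the proof of Proposition~\ref{thm:one-inclusion} with the $\zeta$-weight filtration in place of the $\mm$-adic one, since the two define the same topology on $\mF$.
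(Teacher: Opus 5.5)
\textbf{The primary route fails; the fallback is the proof.}

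Your weight count correctly shows that $\pi(\mm^{N+1})\subseteq\pi(\mm\mJ(\Phi))$. That, however, is only the \emph{necessary} condition of Proposition~\ref{prop:fd-necessary}. It is not the hypothesis of Proposition~\ref{thm:one-inclusion}, which requires $\pi(\mm^{N+1})\subseteq\pi(\mm^{r+1}\mJ(\Phi))$ with $r\geq1$. The reason lies in Lemma~\ref{lem:correction}: the correcting derivation must lie in $\mDer_\ell^p(\mF)$ with $p\geq2$. This is needed both for $\exp(-\delta)$ to converge $\mm$-adically and for the ordinary order of the error to increase.

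Your first plan is to show from $\zeta$-weight bounds that the coefficients $w$ lie in $\mm^2$, so that the derivations raise $\mm$-order. Weight bounds cannot deliver this. The coefficient $w$ has weight $d-d_\zeta(\Phi)+\zeta_a$, which can equal the weight of an arrow $b$ parallel to $a$ with $\zeta_b>\zeta_a$. Then $w$ may have a nonzero linear part, and $\delta\in\mDer_\ell^1(\mF)\setminus\mDer_\ell^2(\mF)$. So the direct reduction to Proposition~\ref{thm:one-inclusion} with the stated $N$ is not justified.

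Your fallback is therefore not optional: it is the proof, and it is exactly what the paper does. It needs no threshold sequence or open mapping theorem. Put $M=d_\zeta(\Phi)$ and let $\Theta_d$ be the lowest-weight component of the error, with $d>D$. Your weight count gives $\Theta_d=\delta(\Phi)$ with $\delta$ homogeneous and raising $\zeta$-weight by $d-M>0$. The remaining steps are:
\begin{enumerate}
\item $\exp(-\delta)$ converges because the weight filtration defines the $\mm$-adic topology. It is an $\ell$-unital automorphism, even though its linear part need not be the identity.
\item The identity in Lemma~\ref{lem:correction} holds verbatim. It shows that $\exp(-\delta)(\Phi+\Theta_d)-\Phi$ has weights at least $2d-M>d$, while $\exp(-\delta)$ does not lower the weights of the remaining error terms.
\item The lowest weights $d_n$ of the successive errors therefore strictly increase, and $\operatorname{ord}(\alpha_n(a)-a)\geq\lceil(\zeta_a+d_n-M)/\bar\zeta\rceil\to\infty$.
\item Lemma~\ref{lem:products}, with depth bound $0$, then yields the limiting automorphism.
\end{enumerate}
Your derivation of the weight-$d$ inclusion, by projecting the coefficients $u_a$ to weight $d-M+\zeta_a>0$, is a slightly more direct form of the relevant half of Lemma~\ref{lem:trace-weight-comparison}. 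The Euler identity you invoke plays no role.
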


The bound in Theorem~\ref{thm:main-weighted} is sharp for trace-finite
potentials homogeneous in ordinary path length: with all arrow
weights equal to one, the smallest determinacy order is
$\max\{d_\zeta(\Phi),a_\zeta^{\mathrm{tr}}(\Phi)\}$
(Corollary~\ref{cor:homogeneous-exact}). Moreover, for each fixed
$\zeta$-weight $M\geq2$, the values $a_\zeta^{\mathrm{tr}}(\Phi)$
are uniformly bounded among Jacobi-finite $\zeta$-homogeneous
potentials of weight $M$ (Proposition~\ref{prop:uniform-weight-bound}).

The graded situation also raises a question of a different
kind: which of these quantities depend only on the
right-equivalence class, when the arrow weights are kept fixed?
For weighted-homogeneous isolated singularities in the
commutative setting, both the weight and the Hilbert series
of the Milnor algebra are invariant. In fact, the weight
of the defining series and the highest nonzero weight of its Milnor
algebra determine one another explicitly (Remark~\ref{rem:commutative-weighted}).

For quiver potentials the answer is more subtle.
Theorem~\ref{thm:graded-linear-part} gives invariance when the
isomorphism induced by a right equivalence between the spaces of
indecomposables of the Jacobian algebras preserves $\zeta$-weights. This condition holds automatically when parallel
arrows have equal weights (Remark~\ref{rem:parallel-arrows-equal-weights});
Proposition~\ref{prop:graded-invariants}
shows that it also holds for one-vertex quivers when the
abelianizations of the Jacobian algebras are finite dimensional.
Thus the $\zeta$-weight and the Jacobian and trace Hilbert
series are invariant in both cases. For trace-finite potentials, so is the degree of the trace Hilbert series.
Without these restrictions, the graded quantities above need not be
right-equivalence invariants: Example~\ref{ex:two-weighted-degrees}
shows this for the $\zeta$-weight and the Jacobian Hilbert series,
whereas the counterexample in Appendix~\ref{app:trace-counterexample} shows that the
trace Hilbert series and its degree may differ even
when the $\zeta$-weight and the Jacobian Hilbert series agree.

This paper is organized as follows. Section~\ref{sec:quivers} collects the conventions and preliminary
results on complete path algebras, pseudo-compact vector spaces,
and algebraic-group orbits. Section~\ref{sec:infinitesimal}
identifies the images of cyclic tangent maps and characterizes
finite-dimensional trace spaces. Section~\ref{sec:fd} proves the
main finite-determinacy criterion. Section~\ref{sec:rigid} treats
rigidity and uniqueness. Section~\ref{sec:weighted} establishes
determinacy bounds for weighted-homogeneous potentials, and
Section~\ref{sec:graded-invariants} studies graded invariants under
right equivalence. Appendix~\ref{app:trace-counterexample} gives a
counterexample to graded trace invariance. Throughout, paths are
composed from right to left; we use $\Phi,\Psi,\Theta$ for
potentials and $\alpha,\beta$ for automorphisms.

\section{Preliminaries}\label{sec:quivers}
We begin by fixing the algebraic and topological conventions for
quiver potentials and right equivalence. The complete path algebra
and the Jacobian algebras of potentials, together with their trace
spaces, are the basic objects throughout the paper. We then recall
the pseudo-compact linear-algebra facts needed to control images of
continuous maps, and finally collect the finite-dimensional
algebraic-group results used to study orbits of truncated potentials.

\subsection{Complete path algebras and quiver potentials}
Let $Q=(Q_0,Q_1,s,t)$ be a finite quiver. Write $Q_*$ for the set of paths on $Q$,
$|u|$ for the length of a path $u$, and $e_i$ for the trivial path at
vertex $i$. Set
\[
 \mF:=\Bbbk\langle\langle Q\rangle\rangle,\qquad
 \ell=\Bbbk Q_0,\qquad \mm=(Q_1).
\]
Elements of $\mF$ are formal sums of paths. For a nonzero $f\in\mF$,
let $\operatorname{ord}(f)$ be the minimum length of a path occurring
with nonzero coefficient in $f$, and set $\operatorname{ord}(0)=+\infty$.
We compose paths from right to left: if $u:j\to i$ and $v:h\to j$, then $uv$ first follows
$v$ and then $u$; otherwise $uv=0$. In particular,
$a=e_{t(a)}ae_{s(a)}$ for every arrow $a$.

The field $\Bbbk$ is discrete, and $\mF$ has the $\mm$-adic topology.  Note that  $\rad(\mF)=\mm$, where $\rad$ denotes the Jacobson radical. Indeed, geometric-series inverses show that $\mm\subseteq\rad(\mF)$;
the semisimple quotient $\mF/\mm=\ell$ gives the reverse inclusion.
The ideal $\mm^n$ consists of formal sums of paths of length at least
$n$. Here $\mm^n$ is the algebraic ideal power: since $Q$ is
finite, every such formal sum can be grouped as
$\sum_{|p|=n}f_p p$, a finite sum over paths of length $n$. In addition,
\[
 \overline{L}=\bigcap_{n\geq0}(L+\mm^n)
\]
for every linear subspace $L\subseteq\mF$, where $\overline{L}$ denotes the closure of $L$ in $\mF$.

An \emph{$\ell$-unital algebra} is an algebra $A$
with a specified algebra homomorphism $\eta:\ell\to A$.
An \emph{$\ell$-augmented algebra} is an algebra equipped with
algebra homomorphisms $\ell\xrightarrow{\eta} A\xrightarrow{\varepsilon}\ell$
such that $\varepsilon\circ\eta=\id_\ell$. Its
\emph{augmentation ideal} is $\ker\varepsilon$.
The morphisms between $\ell$-unital (resp. $\ell$-augmented) algebras are algebra homomorphisms commuting with the specified structure maps.

The inclusion $\eta:\ell\hookrightarrow\mF$ and the
length-zero projection $\varepsilon:\mF\to\ell$ make $\mF$
an $\ell$-augmented algebra with augmentation ideal $\mm$.
Write $\mAut_\ell(\mF)$ for its $\ell$-unital algebra
automorphisms.
\begin{lemma}\label{lem:inverse}
An $\ell$-unital algebra endomorphism of $\mF$ is
automatically an $\ell$-augmented algebra endomorphism, and it
is an isomorphism if and only if its induced map on
$\mm/\mm^2$ is invertible.
\end{lemma}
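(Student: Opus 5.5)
The first claim reduces to showing that $\varphi(\mm)\subseteq\mm$ for any $\ell$-unital endomorphism $\varphi$ of $\mF$; once this holds, $\varepsilon\circ\varphi$ and $\varepsilon$ agree on $\ell$ and on $\mm$, hence everywhere. Take an arrow $a$ with $i=t(a)$, $j=s(a)$. Since $\varphi$ fixes the vertex idempotents, $\varphi(a)=e_i\varphi(a)e_j$. If $i\neq j$, then $e_i\mF e_j\subseteq\mm$ because $e_i\ell e_j=0$. If $i=j$ we use the Jacobson radical instead. By the remark above, $\mm=\rad(\mF)$, so $1-fa$ is invertible for every $f\in\mF$. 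Applying $\varphi$, $1-\varphi(f)\varphi(a)$ is invertible for all $f$ in the image. Passing to $\varepsilon$, the element $1-\varepsilon(\varphi(f))\varepsilon(\varphi(a))$ is invertible in $\ell$. Write $\varepsilon(\varphi(a))=\lambda e_i$ and choose $f=\lambda^{-1}e_i$ if $\lambda\neq0$; this gives the non-invertible element $1-e_i$, a contradiction. Hence $\lambda=0$. Since $\mm$ is the closure of the ideal generated by arrows and $\varphi$ is continuous, we get $\varphi(\mm)\subseteq\mm$. Continuity itself needs a brief check: $\varphi(a)\in\mm$ for all arrows gives $\varphi(\mm^n)\subseteq\mm^n$, using that $\mm^n$ is finitely generated by paths of length $n$. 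The subtle point is that $\varphi$ is only assumed to be an algebraic endomorphism. I would argue via the finite grouping $\sum_{|p|=n}f_p p$, which gives $\varphi(\mm^n)\subseteq\mm^n$ directly from multiplicativity once the arrows land in $\mm$. For this, $\varphi(\mm)\subseteq\mm$ must be proved using only the images of arrows, and the radical argument above does exactly that, applied to any $x\in\mm$.

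For the isomorphism criterion, write $\bar\varphi$ for the induced $\ell$-bimodule map on $\mm/\mm^2$. The direction ``isomorphism $\Rightarrow$ invertible'' is functorial: $\varphi^{-1}$ is also $\ell$-unital, so it preserves $\mm$ and $\mm^2$ and induces the inverse of $\bar\varphi$. For the converse, $\varphi$ preserves the filtration $\mm^n$, and the associated graded map $\operatorname{gr}\varphi$ on $\bigoplus_n\mm^n/\mm^{n+1}$ is the tensor algebra map $T_\ell(\bar\varphi)$. This holds because $\mm^n/\mm^{n+1}\cong(\mm/\mm^2)^{\otimes_\ell n}$, with paths of length $n$ as a basis, and $\varphi$ is multiplicative. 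Hence $\operatorname{gr}\varphi$ is bijective in every degree.

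The remaining step is the standard completeness argument. Injectivity: if $\varphi(f)=0$ with $f\neq0$ of order $n$, then the leading term of $\varphi(f)$ in $\mm^n/\mm^{n+1}$ is nonzero, which is a contradiction. Surjectivity: given $g$, solve successively for $f_n\in\mm^n$ with $g-\varphi(f_0+\dots+f_n)\in\mm^{n+1}$, using bijectivity of $\operatorname{gr}\varphi$. The series $\sum f_n$ converges, since $\mm$-adic sums of terms of increasing order are formal sums. Its image equals $g$ because $\varphi(\mm^n)\subseteq\mm^n$ makes $\varphi$ continuous and $\overline{\{0\}}=\bigcap\mm^n=0$. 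The main obstacle is the first step, establishing $\varphi(\mm)\subseteq\mm$ for loops without assuming continuity, which the radical argument handles.
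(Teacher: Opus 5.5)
Your proof is correct. For the first claim it is essentially the paper's argument. The paper notes that each $\varepsilon_i\circ\alpha$ is a surjective algebra map $\mF\to\Bbbk$, so its kernel is a maximal left ideal containing $\rad(\mF)=\mm$, which gives $\alpha(\mm)\subseteq\bigcap_i\ker\varepsilon_i=\mm$. Your version via units is the same idea: $\varepsilon\circ\varphi$ sends units of $\mF$ to units of $\ell$, while $1-e_i$ is not a unit.

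As you observe at the end, the unit argument applies directly to any $x\in\mm$: pick a vertex $i$ with nonzero coefficient $\lambda_i$ in $\varepsilon(\varphi(x))$ and take $f=\lambda_i^{-1}e_i$. So you can delete the arrow-by-arrow case split. You should also delete the sentence appealing to continuity of $\varphi$, which is circular at that stage. In fact $\mm=\sum_a\mF a$ algebraically, so no closure is involved, and $\varphi(\mm^n)\subseteq\mm^n$ then follows from the finite grouping exactly as in the paper.

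The genuine difference is in the converse of the isomorphism criterion. The paper simply cites \cite[Lemma~2.12]{HZ23}. You instead prove it by identifying $\operatorname{gr}_n\varphi$ with $\bar\varphi^{\otimes_\ell n}$ on $\mm^n/\mm^{n+1}$. This is valid because $\bar\varphi$ is an $\ell$-bimodule map, hence so is its linear inverse. You then run the standard successive-approximation argument, using completeness and $\bigcap_n\mm^n=0$ for surjectivity and the leading-term argument for injectivity. Your route makes the lemma self-contained at the cost of a paragraph, while the paper's citation keeps the proof short.
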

\begin{proof}
Let $\alpha$ be an $\ell$-unital algebra endomorphism of $\mF$.
For each vertex $i$, let $\varepsilon_i:\mF\to\Bbbk$ take the
coefficient of $e_i$. The composite $\varepsilon_i\alpha$ is a
surjective algebra homomorphism, so its kernel is a maximal left
ideal. Hence $\mm=\rad(\mF)\subseteq\ker(\varepsilon_i\alpha)$
for every $i$. Since $\bigcap_i\ker\varepsilon_i=\mm$, we obtain
$\alpha(\mm)\subseteq\mm$. As $\alpha$ fixes $\ell$ and
$\mF=\ell\oplus\mm$, this gives
$\varepsilon\circ\alpha=\varepsilon$. Thus $\alpha$ preserves
the augmentation. Moreover,
$\alpha(\mm^n)\subseteq\mm^n$ for every $n$, so $\alpha$ is
continuous.
Clearly, if $\alpha$ is invertible then so is its induced map on
$\mm/\mm^2$. The converse implication is by
\cite[Lemma~2.12]{HZ23}.
\end{proof}

By Lemma~\ref{lem:inverse}, an $\ell$-unital algebra endomorphism
of $\mF$ is continuous and hence uniquely determined by its arrow
images. Conversely, arbitrary choices in
$e_{t(a)}\mm e_{s(a)}$ for $a\in Q_1$ extend uniquely to such an
endomorphism.
For $r\geq0$, put
\[
 \mAut_\ell^r(\mF)=
 \{\alpha\in\mAut_\ell(\mF):
       \alpha(a)-a\in\mm^{r+1}\text{ for every }a\in Q_1\}.
\]
Its elements are called unitriangular of depth at least $r$. Clearly, $\mAut_\ell^0(\mF)=\mAut_\ell(\mF)$.

\begin{lemma}[{\cite[Lemma~8.18(i)]{GLS16}}]\label{lem:products}
Let $d\geq0$ be an integer, and let $(k_r)_{r\geq0}$ be a sequence of integers
such that $k_r\geq d$ and $k_r\to\infty$. Let
$\alpha_r\in\mAut_\ell^{k_r}(\mF)$ for any $r\geq0$. Then the products
$\beta_r=\alpha_r\cdots\alpha_0$ converge uniquely to an automorphism
$\alpha\in\mAut_\ell^d(\mF)$ in the following sense: for every $N\geq0$, there exists $r_N$ such that
\[
 \beta_r(f)\equiv\alpha(f)\pmod{\mm^{N+1}}
 \qquad(f\in\mF,\ r\geq r_N).
\]
\end{lemma}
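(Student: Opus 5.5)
We prove Lemma~\ref{lem:products} by showing that the arrow images stabilize modulo each power of $\mm$, defining $\alpha$ as the limit on arrows, and then checking that $\alpha$ lies in $\mAut_\ell^d(\mF)$ and approximates the $\beta_r$ uniformly in $f$.

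\emph{Step 1: basic estimate.} If $\gamma\in\mAut_\ell^k(\mF)$ with $k\geq0$, then
\[
\gamma(f)-f\in\mm^{n+k}\qquad\text{for all } f\in\mm^n .
\]
For a path $p=a_n\cdots a_1$ we have $\gamma(p)=\prod_i(a_i+\epsilon_i)$ with $\epsilon_i\in\mm^{k+1}$. Expanding, every term other than $p$ contains at least one factor $\epsilon_i$, so it has order at least $(n-1)+(k+1)=n+k$. For a general $f\in\mm^n$, write $f=\sum_{|p|=n}f_pp$ as a finite sum. Then
\[
\gamma(f)-f=\sum_p\bigl(\gamma(f_p)\gamma(p)-f_pp\bigr)=\sum_p\Bigl(\bigl(\gamma(f_p)-f_p\bigr)\gamma(p)+f_p\bigl(\gamma(p)-p\bigr)\Bigr).
\]
The second summand lies in $\mm^{n+k}$ by the path case. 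For the first summand, note that $\gamma(g)-g\in\mm^{k+1}$ whenever $g\in\mm$, and $\gamma(g)-g=0$ whenever $g\in\ell$. Since $\gamma(p)\in\mm^n$, the first summand lies in $\mm^{n+k+1}$ when $f_p\in\mm$ and vanishes otherwise. Continuity (Lemma~\ref{lem:inverse}) makes the formal-sum manipulations legitimate. In particular, $\gamma(f)-f\in\mm^{k+1}$ for all $f\in\mm$, and $\gamma$ preserves every $\mm^n$.

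\emph{Step 2: Cauchy property.} Let $r\geq1$ and $f\in\mF$, and write $g=\beta_{r-1}(f)$. Then
\[
\beta_r(f)-\beta_{r-1}(f)=\alpha_r(g)-g .
\]
Decompose $g=\varepsilon(g)+(g-\varepsilon(g))$ with $g-\varepsilon(g)\in\mm$. The $\ell$-part is fixed by $\alpha_r$, so Step 1 with $n=1$ gives $\alpha_r(g)-g\in\mm^{k_r+1}$. This holds uniformly in $f$. Since $k_r\to\infty$, for each $N$ there is $r_N$ with $k_r\geq N$ for all $r>r_N$. Hence $\beta_r(f)\equiv\beta_{r_N}(f)\pmod{\mm^{N+1}}$ for all $r\geq r_N$ and all $f$.

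\emph{Step 3: construction of $\alpha$ and uniform approximation.} For each arrow $a$, the sequence $\beta_r(a)$ is Cauchy in the $\mm$-adic topology. Since $\mF$ is complete, it converges to some $\alpha(a)\in e_{t(a)}\mm e_{s(a)}$. Extend these arrow images uniquely to an $\ell$-unital endomorphism $\alpha$ of $\mF$, using the remark after Lemma~\ref{lem:inverse}.

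Each $\beta_r$ lies in $\mAut^d_\ell(\mF)$, because $\mAut^d_\ell(\mF)$ is closed under composition: for $\gamma,\delta\in\mAut^d_\ell(\mF)$, Step 1 gives $\gamma(\delta(a))-\delta(a)\in\mm^{d+1}$. Passing to the limit, $\alpha(a)-a\in\mm^{d+1}$ for every arrow $a$. Thus $\alpha$ induces the identity on $\mm/\mm^2$ when $d\geq1$. When $d=0$, the induced map on $\mm/\mm^2$ agrees with that of $\beta_r$ for large $r$, which is invertible. In either case Lemma~\ref{lem:inverse} shows $\alpha$ is an automorphism, so $\alpha\in\mAut_\ell^d(\mF)$.

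The uniform congruence $\beta_r(f)\equiv\alpha(f)\pmod{\mm^{N+1}}$ is the main point, since convergence is known a priori only on arrows. Two $\ell$-unital endomorphisms $\varphi,\psi$ whose arrow images agree modulo $\mm^{N+1}$ satisfy $\varphi(f)\equiv\psi(f)\pmod{\mm^{N+1}}$ for every $f$. Indeed, both descend to algebra maps $\mF/\mm^{N+1}\to\mF/\mm^{N+1}$. The quotient $\mF/\mm^{N+1}$ is generated by $\ell$ and the arrows, and terms of $f$ of length $>N$ map into $\mm^{N+1}$ under any $\ell$-unital endomorphism by Lemma~\ref{lem:inverse}. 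Applying this to $\beta_r$ and $\alpha$ for $r\geq r_N$ gives the claim.

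\emph{Step 4: uniqueness.} Suppose $\alpha'$ also satisfies the approximation property. Then $\alpha(a)\equiv\alpha'(a)\pmod{\mm^{N+1}}$ for all arrows $a$ and all $N$. Since $\bigcap_N\mm^{N+1}=0$, the arrow images of $\alpha$ and $\alpha'$ coincide, and hence $\alpha=\alpha'$.
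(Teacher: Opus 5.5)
The paper gives no proof of this lemma; it imports it from \cite[Lemma~8.18(i)]{GLS16}, so there is nothing in the paper to compare against. Your argument is a correct, self-contained proof along standard lines: the depth estimate, the uniform Cauchy property of the $\beta_r$, the limit defined on arrows, invertibility via Lemma~\ref{lem:inverse}, and the observation that $\ell$-unital endomorphisms agreeing on arrows modulo $\mm^{N+1}$ agree everywhere modulo $\mm^{N+1}$.

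\textbf{Step~1 is circular as written.} To bound the first summand you use $\gamma(g)-g\in\mm^{k+1}$ for $g\in\mm$, but that is the case $n=1$ of the estimate you are proving. The repair is easy and makes the $f_p p$ decomposition unnecessary:
\begin{itemize}
\item write $f\in\mm^n$ as an $\mm$-adically convergent sum $\sum_q c_q q$ of scalar multiples of paths with $|q|\geq n$;
\item by continuity of $\gamma$, $\gamma(f)-f=\sum_q c_q(\gamma(q)-q)$;
\item each term lies in $\mm^{|q|+k}\subseteq\mm^{n+k}$ by your path case, and $\mm^{n+k}$ is closed.
\end{itemize}
Relatedly, ``vanishes otherwise'' should instead say that $\gamma(f_p)-f_p\in\mm^{k+1}$ for every $f_p\in\mF$, by splitting $f_p$ along $\mF=\ell\oplus\mm$.

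\textbf{Step~3 needs one more sentence.} To apply your ``two endomorphisms'' argument to $\beta_r$ and $\alpha$, state explicitly that $\alpha(a)\equiv\beta_r(a)\pmod{\mm^{N+1}}$ for all $r\geq r_N$. This follows from Step~2 because $\mm^{N+1}$ is closed. With that, the uniform congruence and the uniqueness in Step~4 follow as you describe.
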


\begin{definition}\label{def:trace-space}
For an algebra $A$, its \emph{trace space} and canonical projection are
\[
 \operatorname{Tr}(A)=A/[A,A],
 \qquad \pi_A:A\longrightarrow\operatorname{Tr}(A),
\]
where $[A,A]$ denotes the linear span of commutators.
\end{definition}

For the complete path algebra $\mF$, we abbreviate $\pi_{\mF}$
to $\pi$. By Corollary~\ref{cor:closed-commutators} below,
$[\mF,\mF]$ is closed in $\mF$. The subspaces $\pi(\mm^n)$
form a neighbourhood basis of zero in $\operatorname{Tr}(\mF)$.
The cyclic classes of oriented cycles form a topological basis
of $\operatorname{Tr}(\mF)$. Consequently,
$\operatorname{Tr}(\mF)$ is a product of finite-dimensional spaces
indexed by path length. For nonzero elements $\Phi$ of $\operatorname{Tr}(\mF)$,
$\operatorname{ord}(\Phi)$ likewise denotes the least length of a nonzero component.

Each $\alpha\in\mAut_\ell(\mF)$ induces a linear
automorphism of $\operatorname{Tr}(\mF)$, giving a natural
\emph{left} action
\[
 \alpha\cdot\pi(f)=\pi(\alpha(f))
 \qquad(f\in\mF).
\]
This action preserves $\pi(\mm^n)$ for every $n\geq0$.
For a potential $\Phi$, we also write $\alpha(\Phi)$ for
$\alpha\cdot\Phi$, viewing $\alpha$ as the induced automorphism
of the trace space.

\begin{definition}
\label{def:right-equivalence}
An element of $\operatorname{Tr}(\mF)$ is called a \emph{potential}.
Two potentials $\Phi,\Psi$ are \emph{right equivalent} if
$\Phi=\alpha\cdot\Psi$ for some $\alpha\in\mAut_\ell(\mF)$.
\end{definition}

For every arrow $a$, the cyclic derivative
$\partial_a:\operatorname{Tr}(\mF)\to\mF$ is the continuous linear map given by
\[
 \partial_a(\pi(u))=\sum_{u=vaw}wv,\qquad u\in Q_*.
\]
Cyclic rotation leaves this formula unchanged, so it is well defined.
For the $n$-loop quiver, this cyclic derivative was first introduced by
Rota, Sagan and Stein~\cite{RSS80}.

\begin{definition}
For a potential $\Phi$, its \emph{Jacobian ideal} is the closure of
the two-sided ideal generated by the cyclic derivatives
$\partial_a(\Phi)$, $a\in Q_1$. Thus
\[
 \mJ(\Phi)=\overline{(\partial_a(\Phi)\mid a\in Q_1)}.
\]
The \emph{Jacobian algebra} of $\Phi$ is the quotient algebra
$\Lambda(\Phi)=\mF/\mJ(\Phi)$, equipped with the quotient topology.
The potential is \emph{Jacobi-finite} if $\Lambda(\Phi)$ is
finite dimensional over $\Bbbk$. It is \emph{trace-finite} if the trace space
$\operatorname{Tr}(\Lambda(\Phi))$ is finite dimensional over $\Bbbk$.
\end{definition}

\begin{lemma}[{\cite[Proposition~3.7]{DWZ08}}]\label{lem:jac-invariance}
For every potential $\Phi$ and every $\alpha\in\mAut_\ell(\mF)$,
\[
 \alpha(\mJ(\Phi))=\mJ(\alpha(\Phi)).
\]
Consequently, $\alpha$ induces an isomorphism
$\Lambda(\Phi)\cong\Lambda(\alpha(\Phi))$ as
$\ell$-unital algebras.
\end{lemma}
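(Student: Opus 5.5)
The plan is to show that for every arrow $a$ and every automorphism $\alpha$, the element $\partial_a(\alpha(\Phi))$ lies in the closed ideal $\alpha(\mJ(\Phi))$, and conversely. I would work first with $\Phi$ represented by some $f\in\mm^2$, so that $\alpha(\Phi)=\pi(\alpha(f))$.

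The key tool is a noncommutative chain rule for cyclic derivatives. Write $\alpha(b)=\sum_{p}c_{b,p}\,p$ for each arrow $b$. For a single cycle $u=b_1\cdots b_n$, one has $\pi(\alpha(u))=\pi(\alpha(b_1)\cdots\alpha(b_n))$. Differentiating cyclically with respect to $a$, each occurrence of $a$ sits inside some factor $\alpha(b_k)$. The resulting term is a two-sided combination
\[
 \partial_a\bigl(\pi(\alpha(u))\bigr)=\sum_{k}\sum_{\alpha(b_k)\ni vaw}w\,\alpha(b_{k+1}\cdots b_nb_1\cdots b_{k-1})\,v ,
\]
which is of the form $\sum_b \sum_j w_j\,\alpha(\partial_b(\pi(u)))\,v_j$. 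Here the $w_j,v_j$ run over the splittings $\alpha(b)=\sum v a w$ and depend only on $\alpha(b)$ and $a$, not on $u$.

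Summing over the cycles appearing in $f$ is legitimate by continuity of $\alpha$ and $\partial_a$. Since $Q_1$ is finite and $\alpha(b)$ is a convergent sum, the coefficients $w_j,v_j$ form sequences tending to zero in the $\mm$-adic topology. This gives
\[
 \partial_a(\alpha(\Phi))=\sum_{b\in Q_1}\sum_j w_j\,\alpha(\partial_b\Phi)\,v_j .
\]
The right-hand side is a convergent sum of elements of the ideal generated by the $\alpha(\partial_b\Phi)$. That ideal is contained in $\alpha(\mJ(\Phi))$.

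Two facts are needed next. First, $\alpha(\mJ(\Phi))$ is a closed two-sided ideal. It is an ideal because $\alpha$ is surjective, and it is closed because $\alpha$ is a homeomorphism: by Lemma~\ref{lem:inverse}, $\alpha$ and $\alpha^{-1}$ both preserve every $\mm^n$. Second, $\alpha$ maps the algebraic ideal generated by the $\partial_b\Phi$ onto the one generated by the $\alpha(\partial_b\Phi)$, hence maps closures onto closures. Together these give $\mJ(\alpha(\Phi))\subseteq\alpha(\mJ(\Phi))$.

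Applying the same inclusion to $\alpha^{-1}$ and the potential $\alpha(\Phi)$ gives $\mJ(\Phi)\subseteq\alpha^{-1}(\mJ(\alpha(\Phi)))$, which is the reverse inclusion. So $\alpha(\mJ(\Phi))=\mJ(\alpha(\Phi))$, and $\alpha$ descends to an $\ell$-unital isomorphism $\Lambda(\Phi)\to\Lambda(\alpha(\Phi))$. This map is a homeomorphism for the quotient topologies.

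The main obstacle is making the chain rule rigorous in the complete setting. One must check independence of the representative $f$ modulo $[\mF,\mF]$; this is automatic because $\partial_a$ is defined on $\operatorname{Tr}(\mF)$. One must also check convergence of the double sum. I would handle this by first proving the identity modulo $\mm^{N}$ for every $N$, where only finitely many terms matter. I would then pass to the limit using $\overline{L}=\bigcap_n(L+\mm^n)$.
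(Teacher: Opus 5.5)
Your proposal is correct and is essentially the standard cyclic chain-rule argument behind \cite[Proposition~3.7]{DWZ08}, which the paper cites rather than reproves. The steps match: you express $\partial_a(\alpha(\Phi))$ as a convergent two-sided combination of the $\alpha(\partial_b\Phi)$, use Lemma~\ref{lem:inverse} to see that $\alpha(\mJ(\Phi))$ is a closed ideal, and obtain the reverse inclusion from $\alpha^{-1}$.

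Two small touch-ups. First, your claim that the $w_j$ and $v_j$ individually tend to zero is not literally true, since either one can be a trivial path. What convergence actually needs is $\operatorname{ord}(w_j)+\operatorname{ord}(v_j)\to\infty$, together with the fact that there are only finitely many splittings of each total length. Second, the restriction to $f\in\mm^2$ is unnecessary, because the lemma is stated for every potential. The chain rule holds for cycles of every length, and vertex classes are fixed by $\alpha$ and have zero cyclic derivatives.
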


If $\Phi\in\pi(\mm^2)$, then $\mJ(\Phi)\subseteq\mm$.
Hence $\Lambda(\Phi)$ inherits an $\ell$-augmentation with
augmentation ideal $\mm/\mJ(\Phi)$. In this case, the
isomorphism induced by $\alpha$ also preserves the augmentation,
by Lemma~\ref{lem:inverse}. It is also a homeomorphism, since
$\alpha$ and its inverse are continuous.

\subsection{Pseudo-compact vector spaces}\label{sec:prelim}
Throughout this paper, topological vector spaces have linear
topologies over the discrete field $\Bbbk$: the neighbourhoods
of zero have a basis consisting of linear subspaces.
A topological vector space is called \emph{pseudo-compact}
(PC) if it is Hausdorff and complete and has a neighbourhood basis
of zero consisting of open subspaces of finite codimension
\cite[Chapter~IV, \S~3]{Ga62}; see also \cite{Va15}. It is called
\emph{linearly compact} if every family of closed affine subspaces
with the finite intersection property has nonempty intersection
\cite[\S~1]{Ze53}.

\begin{lemma}\label{lem:fip}
A Hausdorff topological vector space is
PC if and only if it is linearly compact.
\end{lemma}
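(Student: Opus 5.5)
The plan is to prove the two implications separately. Both rest on the standard fact that, in a linearly topologized Hausdorff space, the open subspaces of finite codimension correspond exactly to finite-dimensional discrete quotients. Throughout, let $V$ be a Hausdorff topological vector space with a linear topology.

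\emph{PC implies linearly compact.} Let $\{x_i+W_i\}_{i\in I}$ be a family of closed affine subspaces with the finite intersection property. Enlarging the family by all finite intersections, we may assume it is directed downward. For each open subspace $U$ of finite codimension, consider the images $\bar x_i+\bar W_i$ in the finite-dimensional quotient $V/U$. These images still have the finite intersection property. In a finite-dimensional space, a downward-directed family of affine subspaces stabilizes, because dimensions cannot drop forever. Hence the images have a nonempty common intersection $C_U$, which is itself an affine subspace of $V/U$. If $U'\subseteq U$, the projection $V/U'\to V/U$ maps $C_{U'}$ into $C_U$. Taking $U$ to run over a neighbourhood basis, this gives an inverse system of nonempty finite-dimensional affine spaces.

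The main obstacle is to choose compatible points in this inverse system, because the transition maps need not be surjective onto $C_U$. To get around this, I will replace each $C_U$ by the stable image $\bigcap_{U'\subseteq U}\operatorname{im}(C_{U'}\to C_U)$. This is a decreasing intersection of nonempty affine subspaces of a finite-dimensional space, so it stabilizes and is nonempty. The transition maps between these stable images are then surjective, which lets us choose a compatible thread. Completeness of $V$, together with $V\cong\varprojlim V/U$ for Hausdorff complete $V$, produces a point $x\in V$ whose class lies in $C_U$ for every $U$. Thus $x\in\bigcap_U(x_i+W_i+U)$ for every $i$. This set equals $x_i+W_i$ because $x_i+W_i$ is closed.

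\emph{Linearly compact implies PC.} We need three properties.

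Completeness: let $(x_\lambda)$ be a Cauchy net. For each open subspace $U$, the net is eventually constant modulo $U$, which determines a coset $c_U+U$. These cosets are closed, since open subspaces are closed, and they have the finite intersection property, since they are compatible under intersections of the $U$'s. Linear compactness gives a point in their intersection, and the net converges to it.

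Finite codimension: let $U$ be an open subspace. Then $V/U$ is discrete, and it is linearly compact because linear compactness passes to quotients by taking preimages of affine subspaces. A discrete linearly compact space must be finite dimensional. Indeed, if it had an infinite basis $(e_n)$, let $H_n$ be the span of $e_m$ for $m\ge n$ together with a complement of $\operatorname{span}(e_m)$. Then the family $\{e_0+\cdots+e_{n-1}+H_n\}_n$ has the finite intersection property but empty intersection.

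So every open subspace has finite codimension, which, together with Hausdorffness and completeness, shows that $V$ is PC.
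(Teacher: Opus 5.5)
The direction linearly compact $\Rightarrow$ PC is essentially the paper's argument. Quotients by open subspaces inherit linear compactness via preimages, the nested cosets rule out infinite dimension, and completeness comes from intersecting compatible closed cosets; your Cauchy-net phrasing is equivalent to the paper's surjectivity of $V\to\varprojlim_U V/U$. For PC $\Rightarrow$ linearly compact the paper simply cites Zelinsky's Proposition~4. Your direct argument is fine up to the stable images with surjective transition maps $\phi_{U'U}\colon V/U'\to V/U$. The final claim, that surjectivity ``lets us choose a compatible thread,'' is a genuine gap. Inductive choice works only when the neighbourhood basis has a countable cofinal chain, but a general PC space need not be first countable (e.g.\ $\Bbbk^I$ with $I$ uncountable). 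Over an uncountable directed set, an inverse system of nonempty sets with surjective transition maps can have empty limit. An example is Waterhouse's: injections of finite subsets of an uncountable set into $\mathbb N$, with restriction maps. So the affine structure must be used once more, and this is exactly the nontrivial content of the result the paper cites.

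A standard repair uses Zorn's lemma. Choose a minimal family $(A_U)$ of nonempty affine subspaces $A_U\subseteq C_U$ with $\phi_{U'U}(A_{U'})\subseteq A_U$ for $U'\subseteq U$. Chains have lower bounds, given by componentwise intersections, because descending chains of affine subspaces of a finite-dimensional space stabilize. Your stable-image construction applied to a minimal family returns the same family, so its transition maps are surjective. Now fix $U_0$ and $y\in A_{U_0}$. Replace $A_U$ by $A_U\cap\phi_{UU_0}^{-1}(y)$ for $U\subseteq U_0$, leaving the other $A_U$ unchanged. This is again an admissible family, nonempty by surjectivity, so minimality forces $A_{U_0}=\{y\}$. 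Hence every $A_U$ is a singleton, and these singletons form the thread. With this step added, or under a countable basis assumption, your proof is complete. The countable case already covers every space the paper actually uses, such as $\mF$, $\operatorname{Tr}(\mF)$, $\mDer_\ell(\mF)$ and their closed subquotients.
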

\begin{proof}
If $V$ is PC, then $V\cong\varprojlim_U V/U$, where $U$ ranges
over the open subspaces and each $V/U$ is finite dimensional and
discrete. Hence $V$ is linearly compact by
\cite[Proposition~4]{Ze53}.

Conversely, let $V$ be linearly compact. Let $U$ be an open linear
subspace of $V$, and let $p:V\to V/U$ be the canonical projection.
For any family $(A_i)_{i\in I}$ of closed affine subspaces of $V/U$
with the finite intersection property, the family
$(p^{-1}(A_i))_{i\in I}$ consists of closed affine subspaces of $V$
and has the finite intersection property. By the linear compactness
of $V$, we have $\bigcap_{i\in I}p^{-1}(A_i)\ne\varnothing$,
so $\bigcap_{i\in I}A_i\ne\varnothing$. Hence $V/U$ is
linearly compact. If $V/U$ were
infinite dimensional, choose linearly independent vectors
$v_1,v_2,\ldots$ in it. The affine subspaces
\[
 (v_1+\cdots+v_n)+\operatorname{span}_{\Bbbk}
                  \{v_{n+1},v_{n+2},\ldots\},\qquad n\geq1,
\]
would have the finite intersection property but empty total
intersection. Thus every open $U$ has finite codimension.
Since the topology is linear, $V$ has a neighbourhood basis of
zero consisting of open subspaces of finite codimension.
Moreover, the compatible cosets representing any element of
$\varprojlim_U V/U$, where $U$ runs over the open subspaces of $V$,
form a family of closed affine subspaces
with the finite intersection property. Their intersection is
nonempty, so the canonical map $V\to\varprojlim_U V/U$ is
surjective. Hence $V$ is complete and therefore PC.
\end{proof}

\begin{prop}\label{prop:pc-subquot}
A closed subspace of a PC vector space is
PC. So is the quotient by that subspace.
\end{prop}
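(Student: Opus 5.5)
We verify the two defining conditions directly and use Lemma~\ref{lem:fip} for completeness in the subspace case. Let $V$ be PC and $W\subseteq V$ a closed subspace, and give $W$ the subspace topology and $V/W$ the quotient topology. Both topologies are linear: the traces $U\cap W$ and the images $(U+W)/W$ of open subspaces $U\subseteq V$ form neighbourhood bases of zero consisting of subspaces.

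\emph{Subspace.} First, $W$ is Hausdorff, since it is a subspace of a Hausdorff space. Next, for an open subspace $U$ of finite codimension in $V$, the subspace $U\cap W$ has finite codimension in $W$, because $W/(U\cap W)$ embeds into $V/U$. For completeness, take a family of closed affine subspaces of $W$ with the finite intersection property. Each is closed in $W$, and $W$ is closed in $V$, so each is a closed affine subspace of $V$. The family still has the finite intersection property, so by linear compactness of $V$ (Lemma~\ref{lem:fip}) its intersection is nonempty, and this intersection lies in $W$. Thus $W$ is linearly compact and Hausdorff, hence PC by Lemma~\ref{lem:fip}.

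\emph{Quotient.} The quotient $V/W$ is Hausdorff because $W$ is closed: $\{0\}$ is closed in $V/W$ since its preimage $W$ is closed. The subspaces $(U+W)/W$ form a neighbourhood basis of zero, and each has finite codimension, being a quotient-of-quotient of the finite-dimensional space $V/U$. For completeness I again use Lemma~\ref{lem:fip}. Let $(A_i)$ be closed affine subspaces of $V/W$ with the finite intersection property, and let $p\colon V\to V/W$ be the projection. The preimages $p^{-1}(A_i)$ are closed affine subspaces of $V$, since $p$ is continuous. They have the finite intersection property, because $p$ is surjective and so $p^{-1}(A_{i_1}\cap\cdots\cap A_{i_n})=\bigcap_k p^{-1}(A_{i_k})$ is nonempty whenever $A_{i_1}\cap\cdots\cap A_{i_n}$ is. Linear compactness of $V$ then gives a point in $\bigcap_i p^{-1}(A_i)$, and its image lies in $\bigcap_i A_i$. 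Hence $V/W$ is linearly compact, and therefore PC.

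The only point needing care is which topology on $W$ and $V/W$ is meant. With the subspace and quotient topologies, the argument above goes through without using completeness of $V$ directly. Everything is routed through the equivalence in Lemma~\ref{lem:fip}, which avoids a Cauchy-net argument in the quotient. That Cauchy-net argument would otherwise be the main obstacle, since lifting Cauchy nets through a quotient map is not automatic.
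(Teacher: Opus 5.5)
Your proof is correct and takes the same approach as the paper for the quotient: you pull closed affine subspaces back along the projection $V\to V/W$, get linear compactness, and conclude by Lemma~\ref{lem:fip}. The only difference is in the subspace case, which the paper calls clear (via completeness and the trace topology) and which you instead route through linear compactness as well; both arguments are valid.
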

\begin{proof}
The first statement is clear. For a closed subspace $L\subseteq V$,
the quotient $V/L$ is Hausdorff. Taking inverse images of closed
 affine subspaces under the canonical projection $V\to V/L$ shows
that $V/L$ is linearly compact. Hence $V/L$ is PC by
Lemma~\ref{lem:fip}.
\end{proof}

\begin{prop}\label{prop:pc-open}
A linear subspace $L$ of a PC vector space $V$ is open if and only
if it is closed and has finite codimension.
\end{prop}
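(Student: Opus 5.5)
The proof splits into the two implications.

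For the forward direction, let $L$ be an open subspace of the PC space $V$.

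Closedness: the complement $V\setminus L$ is a union of cosets $v+L$, each of which is open because translation is a homeomorphism. Hence $L$ is closed.

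Finite codimension: since $V$ is PC, it has a neighbourhood basis of zero consisting of open subspaces of finite codimension. As $L$ is a neighbourhood of zero, it contains one such subspace $U$. Then $V/L$ is a quotient of the finite-dimensional space $V/U$, so $L$ has finite codimension.

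For the converse, let $L$ be closed of finite codimension. The plan is to find an open subspace $U$ with $U\subseteq L$. Then $L=\bigcup_{x\in L}(x+U)$ is open. To produce $U$, choose a complement $W$ of $L$, which is finite dimensional, and pick a basis $w_1,\dots,w_k$ of $W$.

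Step 1. Each $w_i$ lies outside the closed set $L$. So there is an open subspace $U_i$ with $(w_i+U_i)\cap L=\varnothing$.

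Step 2. Put $U_0=U_1\cap\dots\cap U_k$, which is open. One would like $U_0\subseteq L$, but disjointness from finitely many translates does not give this directly. I would instead argue by linear compactness, using Lemma~\ref{lem:fip} and Proposition~\ref{prop:pc-subquot}.

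Step 3. The quotient $V/L$ is PC by Proposition~\ref{prop:pc-subquot}, and it is finite dimensional. A Hausdorff linear topology on a finite-dimensional space is discrete. Indeed, open subspaces form a basis of zero, and a strictly descending chain of subspaces must stop after finitely many steps. So the intersection of finitely many of them equals $\bigcap(\text{all open subspaces})=0$ by the Hausdorff property. Thus $\{0\}$ is open in $V/L$.

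Step 4. The projection $p\colon V\to V/L$ is continuous by the definition of the quotient topology. Hence $L=p^{-1}(0)$ is open.

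This Step 3/Step 4 route makes Steps 1 and 2 unnecessary. The only delicate point is the discreteness claim in Step 3, namely that an intersection of open subspaces of a finite-dimensional space is attained by finitely many of them. This holds by the descending chain condition on subspaces of a finite-dimensional space.

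Note that completeness is not really needed beyond knowing that $V/L$ is Hausdorff, which follows from $L$ being closed.
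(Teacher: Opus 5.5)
Your proof is correct and matches the paper's: the forward direction uses open cosets for closedness and a contained basic open subspace of finite codimension, and the converse uses that $V/L$ is finite dimensional and Hausdorff, hence discrete, so $L=p^{-1}(0)$ is open. You can delete the abandoned Steps 1--2; your descending-chain justification of discreteness supplies exactly the detail the paper leaves implicit.
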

\begin{proof}
An open subspace is closed because its complement is a union of open
cosets. It has finite codimension because it contains a basic open
subspace of finite codimension.
Conversely, if a subspace $L$ is closed and has finite codimension, then $V/L$
is finite dimensional and Hausdorff, hence discrete in its linear
topology. Thus $L$ is open.
\end{proof}

\begin{remark}
Finite codimension alone implies neither openness nor closedness.
For example, let $V=\Bbbk^{\mathbb N}$ with the product topology.
It is readily seen to be PC. Choose a nonzero linear functional
$\lambda:V\to\Bbbk$
vanishing on the finite-support subspace $\Bbbk^{(\mathbb N)}$.
Such a functional exists because $V/\Bbbk^{(\mathbb N)}\ne0$.
Then $\ker\lambda$ has codimension one and contains the dense
subspace $\Bbbk^{(\mathbb N)}$. It is therefore dense and proper,
so it is neither open nor closed.
\end{remark}

\begin{prop}\label{prop:pc}
Let $f:V\to W$ be a continuous linear map between PC
vector spaces.
\begin{enumerate}
\item For every closed linear subspace $L\subseteq V$, the image $f(L)$
 is closed in $W$.
\item If $f$ is surjective, then it is open.
\end{enumerate}
\end{prop}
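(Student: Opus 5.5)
The plan is to prove (1) through linear compactness (Lemma~\ref{lem:fip}) and then deduce (2) from (1) together with a quotient argument.

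For (1), let $L\subseteq V$ be a closed subspace. By Proposition~\ref{prop:pc-subquot}, $L$ is PC, so by Lemma~\ref{lem:fip} it is linearly compact. Since $W$ is Hausdorff, it suffices to show $\overline{f(L)}\subseteq f(L)$. Fix $w\in\overline{f(L)}$. For each open subspace $U\subseteq W$, consider the set
\[
 A_U=\{v\in L: f(v)\in w+U\}.
\]
It is nonempty, because $w+U$ is an open neighbourhood of $w$ and therefore meets $f(L)$. It is an affine subspace of $L$, being the preimage of the affine subspace $w+U$ under the linear map $f|_L$. It is closed, because $w+U$ is closed (open subspaces are closed) and $f$ is continuous.

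Next I check the finite intersection property. Given $U_1,\dots,U_n$, we have
\[
 A_{U_1}\cap\cdots\cap A_{U_n}\supseteq A_{U_1\cap\cdots\cap U_n},
\]
and $U_1\cap\cdots\cap U_n$ is again open, so the right-hand side is nonempty. Linear compactness of $L$ now gives some $v\in\bigcap_U A_U$. Then $f(v)-w\in\bigcap_U U=0$, where the last equality uses that $W$ is Hausdorff. Hence $w=f(v)\in f(L)$, so $f(L)$ is closed.

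For (2), assume $f$ is surjective. Let $U\subseteq V$ be an open subspace; it suffices to show $f(U)$ is open in $W$. By Proposition~\ref{prop:pc-open}, $U$ is closed of finite codimension in $V$. By (1), $f(U)$ is closed in $W$. Surjectivity gives $W/f(U)$ as a quotient of $V/U$, so $f(U)$ has finite codimension in $W$. Applying Proposition~\ref{prop:pc-open} in $W$, the subspace $f(U)$ is open. Every open set in $V$ is a union of cosets $v+U$ with $U$ an open subspace, and $f(v+U)=f(v)+f(U)$ is open. Therefore $f$ is an open map.

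The only delicate point is confirming that the sets $A_U$ are closed affine subspaces with the finite intersection property. This rests on open subspaces being closed and on continuity of $f$, both of which are already available, so I expect no real obstacle.
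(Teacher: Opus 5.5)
Your proposal is correct and follows the paper's proof essentially verbatim. Part (1) applies linear compactness (Lemma~\ref{lem:fip}) to the closed affine subspaces $L\cap f^{-1}(w+U)$ and then uses that $W$ is Hausdorff; part (2) combines (1) with Proposition~\ref{prop:pc-open}, via the surjection $V/U\to W/f(U)$. The only cosmetic difference is that you invoke linear compactness of $L$ itself (through Proposition~\ref{prop:pc-subquot}), whereas the paper can equally work in $V$; this changes nothing.
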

\begin{proof}
For (1), let $L\subseteq V$ be closed and let $w\in\overline{f(L)}$.
For every open linear subspace $U\subseteq W$, we have
$f(L)\cap(w+U)\ne\varnothing$. Hence the affine subspace
\[
 L\cap f^{-1}(w+U)
\]
is nonempty and closed. These subspaces have the finite intersection
property. Lemma~\ref{lem:fip} gives $v\in L$ with $f(v)-w\in U$
for every $U$. Hausdorffness gives $w=f(v)\in f(L)$.

For (2), let $U\subseteq V$ be an open linear subspace. By (1),
$f(U)$ is closed, and the surjection $V/U\to W/f(U)$ shows that
it has finite codimension. Hence $f(U)$ is open by
Proposition~\ref{prop:pc-open}. Since every open subset of $V$ is a union of translates of open linear subspaces, $f$ is open.
\end{proof}

The algebra $\mF$ is PC, since it is complete and Hausdorff and
each $\mm^r$ has finite codimension. By
Proposition~\ref{prop:pc-subquot}, so is every quotient algebra
$A=\mF/I$ by a closed ideal. Moreover, the trace spaces of these
algebras are PC by the following result. Recall that a topological
algebra is called \emph{topologically finitely generated} if it
contains a finitely generated dense subalgebra. The images of the
vertices and arrows topologically generate every such quotient
$\mF/I$.

\begin{corollary}\label{cor:closed-commutators}
For every topologically finitely generated PC algebra $A$, the commutator space $[A,A]$ is closed in $A$.
\end{corollary}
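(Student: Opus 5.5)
The idea is to realize $[A,A]$ as the image of a continuous linear map between PC spaces whose domain is a \emph{finite} product of copies of $A$, and then apply Proposition~\ref{prop:pc}(1). Let $x_1,\ldots,x_m$ generate a dense subalgebra $B\subseteq A$. The key identity is
\[
 [uv,w]=[u,vw]+[v,wu]\qquad(u,v,w\in A),
\]
which by induction on word length shows that $[b,a]$ lies in $\sum_{i=1}^m[x_i,A]$ for every monomial $b$ in the generators. Indeed, for $b=x_iu$ we get $[x_iu,a]=[x_i,ua]+[u,ax_i]$, and the second term is handled by induction. Constants commute with everything, so by linearity $[B,A]\subseteq\sum_i[x_i,A]$.

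Next, consider the map
\[
 f:A^m\to A,\qquad (a_1,\ldots,a_m)\mapsto\sum_{i=1}^m (x_ia_i-a_ix_i).
\]
It is linear, and it is continuous because multiplication in the topological algebra $A$ is continuous; for $A=\mF/I$ this follows since $x\mm^n\subseteq\mm^n$. A finite product of PC spaces is PC. Proposition~\ref{prop:pc}(1), applied with $L=A^m$, then shows that $f(A^m)=\sum_i[x_i,A]$ is closed in $A$.

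It remains to prove that $[A,A]=\sum_i[x_i,A]$. The inclusion $\supseteq$ is clear. For $\subseteq$, fix $a,c\in A$ and approximate $c$ by elements $b\in B$. For each open subspace $U$, continuity of $y\mapsto[y,a]$ lets us pick $b\in B$ with $[c,a]-[b,a]\in U$. Since $[b,a]\in f(A^m)$, we get $[c,a]\in\overline{f(A^m)}=f(A^m)$. Because $[A,A]$ is the linear span of such commutators and $f(A^m)$ is a subspace, this gives $[A,A]=f(A^m)$, which is closed.

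The main obstacle is the reduction from $[A,A]$ to finitely many $[x_i,A]$. The span of all commutators is naturally the image of $A\otimes A$, which is not PC, so the image argument cannot be applied directly. The cyclic identity combined with density of $B$ circumvents this. The one point to check carefully is continuity of multiplication in a general PC algebra. Here I would take it as part of the definition of a topological algebra; in the application to $\mF/I$ it is immediate from $\mm$-adic continuity.
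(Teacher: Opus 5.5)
Your proposal is correct and is essentially the paper's own proof. Both arguments run the same way. The map $(u_1,\ldots,u_m)\mapsto\sum_i[s_i,u_i]$ on $A^m$ has closed image by Proposition~\ref{prop:pc}(1). The identity $[uv,w]=[u,vw]+[v,wu]$ places $[p,w]$ in that image for every polynomial $p$ in the generators. Density together with closedness then yields all of $[A,A]$.
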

\begin{proof}
Choose $s_1,\ldots,s_m\in A$ generating a dense subalgebra.
The map
\[
 A^m\longrightarrow A,\qquad
 (u_1,\ldots,u_m)\longmapsto\sum_{i=1}^m[s_i,u_i]
\]
is continuous and has closed image by
Proposition~\ref{prop:pc}. Clearly, the image is contained in $[A,A]$.
The identity
$[uv,w]=[u,vw]+[v,wu]$ shows that this image contains
$[p,w]$ for every polynomial $p$ in the $s_i$ and every
$w\in A$. Since the polynomials in the $s_i$ are dense in $A$ and the image
is closed, it contains $[u,w]$ for every $u\in A$.
So $[A,A]$ is the image and hence it is closed.
\end{proof}

\subsection{Algebraic groups and orbit geometry}\label{sec:alg-group-prelim}
All schemes and algebraic groups below are over $\Bbbk$.
For a scheme $X$, the Zariski tangent space at a
$\Bbbk$-rational point $x\in X$ is denoted by $T_xX$.
If an algebraic group $G$ acts on a scheme $X$ of finite type,
then the orbit through any $x\in X(\Bbbk)$ is locally closed
\cite[Proposition~1.65(b)]{Mi17}.
We denote this orbit, endowed with its reduced subscheme structure,
by $\mathcal O_{G,x}$.

For a finite-dimensional vector space $V$, write
\[
 \mathbb A(V)=\operatorname{Spec}\bigl(\operatorname{Sym}_{\Bbbk}V^\vee\bigr).
\]
Clearly, $\mathbb A(V)$ is a commutative algebraic group, and
$\mathbb A(V)(R)=V\otimes_{\Bbbk}R$ for every commutative
$\Bbbk$-algebra $R$.  A linear subspace $E\subseteq V$
defines a closed subgroup scheme $\mathbb A(E)$, and, for
$x\in V=\mathbb A(V)(\Bbbk)$, the notation
$x+\mathbb A(E)$ means its translate under the group law.
In particular,
\[
 (x+\mathbb A(E))(\Bbbk)=x+E,\qquad
 T_x\mathbb A(V)=V,\qquad
 T_x(x+\mathbb A(E))=E.
\]
\begin{lemma}\label{lem:affine-orbit}
Let $V$ be a finite-dimensional vector space, and let $G$ be an
algebraic group acting on $\mathbb A(V)$. Then, for any $x\in V$
and any linear subspace $E\subseteq V$,
\[
 (x+\mathbb A(E))(\Bbbk)\subseteq G(\Bbbk)\cdot x
 \quad\Longrightarrow\quad E\subseteq T_x\mathcal O_{G,x}.
\]
\end{lemma}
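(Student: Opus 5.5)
The plan is to reduce the claimed inclusion of tangent spaces to an inclusion of closed subschemes, using Zariski density of rational points. Write $Y=x+\mathbb A(E)$, a closed subscheme of $\mathbb A(V)$ isomorphic to affine space $\mathbb A(E)$. The orbit $\mathcal O_{G,x}$ is locally closed in $\mathbb A(V)$, so there is an open subscheme $W\subseteq\mathbb A(V)$ in which $\mathcal O_{G,x}$ is closed; denote its closure by $Z=\overline{\mathcal O_{G,x}}$ (reduced), with $\mathcal O_{G,x}=Z\cap W$ open in $Z$.

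First I will show that $Y\subseteq Z$ as closed subschemes. Since $\Bbbk$ has characteristic zero, it is infinite, so the $\Bbbk$-rational points $x+E$ are Zariski dense in the reduced, irreducible scheme $Y\cong\mathbb A(E)$. Concretely, a polynomial function on $\mathbb A(V)$ vanishing on $Z$ restricts to a polynomial on $E$ vanishing at every point of $E$, hence it is zero. This uses the hypothesis that every $\Bbbk$-point of $Y$ is a $\Bbbk$-point of the orbit, and hence of $Z$. Because $Y$ is reduced, the vanishing of the ideal of $Z$ on $Y$ gives $Y\subseteq Z$ scheme-theoretically.

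Next I will pass from $Z$ to the orbit at the point $x$ itself. The point $x$ lies in $\mathcal O_{G,x}=Z\cap W$, so $Y\cap W$ is an open neighbourhood of $x$ in $Y$ contained in $Z\cap W=\mathcal O_{G,x}$. Tangent spaces are local and functorial for closed immersions, so
\[
E=T_xY=T_x(Y\cap W)\subseteq T_x(Z\cap W)=T_x\mathcal O_{G,x},
\]
both viewed inside $T_x\mathbb A(V)=V$.

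The main point requiring care is the density step. One must avoid assuming $\Bbbk$ is algebraically closed, and also ensure that the scheme-theoretic (reduced) orbit structure is the one used. Using the reduced closure $Z$ and the reducedness of $Y$ handles this cleanly, and only the infinitude of $\Bbbk$ is needed. Note that I never use any smoothness of the orbit; only containment of subschemes is needed.
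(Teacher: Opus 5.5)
Your proposal is correct and follows essentially the same route as the paper. Both arguments use the density of $\Bbbk$-points over the infinite field to get $x+\mathbb A(E)\subseteq Z$, where $Z$ is the closure of the orbit, and then use that the locally closed orbit is open in $Z$ near $x$ to conclude $E=T_x(x+\mathbb A(E))\subseteq T_xZ=T_x\mathcal O_{G,x}$. Intersecting with the open set $W$ is just an explicit way of writing the paper's step ``open in $Z$ near $x$.''
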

\begin{proof}
Let $Z$ be the scheme-theoretic closure of
$\mathcal O_{G,x}$ in $\mathbb A(V)$.  Since the orbit is
locally closed and contains $x$, it is open in $Z$ near $x$,
so $T_xZ=T_x\mathcal O_{G,x}$.
If $x+\mathbb A(E)\subseteq Z$, then
\[
 E=T_x(x+\mathbb A(E))\subseteq T_xZ
    =T_x\mathcal O_{G,x}.
\]
Assume $(x+\mathbb A(E))(\Bbbk)\subseteq G(\Bbbk)\cdot x$.
As $\Bbbk$ is infinite, the set $(x+\mathbb A(E))(\Bbbk)$
is schematically dense in
$x+\mathbb A(E)$. Thus $x+\mathbb A(E)\subseteq Z$
as a closed subscheme. Hence $E\subseteq T_x\mathcal O_{G,x}$.
\end{proof}

For an $\ell$-unital algebra $A$, let $\mDer_\ell(A)$ denote the
$\Bbbk$-vector space of derivations $\delta:A\to A$ satisfying
$\delta\circ\eta=0$. Since
$\delta([u,v])=[\delta(u),v]+[u,\delta(v)]$, each such derivation
induces a linear endomorphism of $\operatorname{Tr}(A)$, given by
$\delta\cdot\pi_A(u)=\pi_A(\delta(u))$.

Now suppose that $A$ is $\ell$-augmented, with augmentation ideal $I$.
Put
\[
 \mDer^{\mathrm{aug}}_\ell(A)
 =\{\delta\in\mDer_\ell(A):\varepsilon\circ\delta=0\}
 =\{\delta\in\mDer_\ell(A):\delta(I)\subseteq I\}.
\]
For any commutative algebra $R$, define
\[
 \mathcal G_A(R)=\bigl\{\alpha\in\operatorname{Aut}_{R\text{-alg}}(A_R):
 \alpha\circ\eta_R=\eta_R,\quad
 \varepsilon_R\circ\alpha=\varepsilon_R\bigr\},
\]
where $A_R=A\otimes_{\Bbbk}R$, $\ell_R=\ell\otimes_{\Bbbk}R$,
$\eta_R=\eta\otimes_{\Bbbk}R$, and
$\varepsilon_R=\varepsilon\otimes_{\Bbbk}R$.
The assignment $R\mapsto\mathcal G_A(R)$ defines a functor from
commutative algebras to groups, via scalar extension.

\begin{lemma}
\label{lem:aug-aut-group}
Let $A$ be a finite-dimensional $\ell$-augmented algebra.
Then the functor $\mathcal G_A$ is represented by a smooth affine
algebraic group over $\Bbbk$, with
\[
 T_{\id}\mathcal G_A
 \cong\mDer^{\mathrm{aug}}_\ell(A).
\]
\end{lemma}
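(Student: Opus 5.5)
The plan is to realize $\mathcal G_A$ as a closed subgroup scheme of $\operatorname{GL}(A)$, using only that $A$ is finite dimensional. Write $A=\ell\oplus I$. An element $\alpha\in\mathcal G_A(R)$ is an $R$-linear automorphism of $A_R$ that fixes $\ell_R$ pointwise and satisfies $\varepsilon_R\circ\alpha=\varepsilon_R$. Hence $\alpha(I_R)\subseteq I_R$, and $\alpha$ is determined by its restriction to $I_R$.

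Fix a $\Bbbk$-basis of $I$. The conditions on $\alpha|_{I}$ are: multiplicativity $\alpha(xy)=\alpha(x)\alpha(y)$ for $x,y\in I$, and $\ell$-bimodule linearity $\alpha(e_ixe_j)=e_i\alpha(x)e_j$. These are polynomial equations in the matrix entries of $\alpha|_I$. Invertibility is the open condition $\det\ne0$. The inverse of an algebra automorphism is again one, and it also fixes $\ell$ and preserves $\varepsilon$. Therefore $\mathcal G_A$ is represented by a closed subgroup scheme of $\operatorname{GL}(I)$. It is affine and of finite type, hence an affine algebraic group.

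Next, the tangent space. An element of $\mathcal G_A(\Bbbk[\epsilon]/\epsilon^2)$ lying over $\id$ has the form $\id+\epsilon\delta$ with $\delta\in\operatorname{End}_\Bbbk(A)$. Expanding the defining conditions to first order in $\epsilon$ gives the following:
\begin{itemize}
\item multiplicativity becomes the Leibniz rule for $\delta$;
\item $\alpha\circ\eta_R=\eta_R$ becomes $\delta\circ\eta=0$;
\item $\varepsilon_R\circ\alpha=\varepsilon_R$ becomes $\varepsilon\circ\delta=0$.
\end{itemize}
Such an $\id+\epsilon\delta$ is automatically invertible, with inverse $\id-\epsilon\delta$. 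So $T_{\id}\mathcal G_A\cong\mDer^{\mathrm{aug}}_\ell(A)$.

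The main obstacle is smoothness. Since $\operatorname{char}\Bbbk=0$, Cartier's theorem says that every affine group scheme of finite type over $\Bbbk$ is smooth. I would invoke this, citing for example Milne's \emph{Algebraic Groups}, Theorem 3.23, the same reference \cite{Mi17} already used in the paper. If the paper prefers a self-contained argument, the alternative is to show directly that $\mDer^{\mathrm{aug}}_\ell(A)$ integrates. In characteristic zero, every such derivation $\delta$ that is nilpotent on $I$ exponentiates to $\exp(\epsilon\delta)$. The general case would then need a Levi-type decomposition, which is more work, so Cartier's theorem is the route I would commit to.

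With representability, the tangent identification and smoothness in hand, the lemma follows.
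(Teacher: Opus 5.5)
Your proposal is correct and follows the paper's proof: both realize $\mathcal G_A$ as a closed subgroup scheme of a general linear group cut out by polynomial equations, get smoothness from Cartier's theorem \cite[Theorem~3.23]{Mi17}, and read off $T_{\id}\mathcal G_A\cong\mDer^{\mathrm{aug}}_\ell(A)$ by writing points over $\Bbbk[\epsilon]/(\epsilon^2)$ as $\id+\epsilon\delta$. Embedding into $\operatorname{GL}(I)$ rather than $\operatorname{GL}(A)$ is only a cosmetic change, and you correctly use $\ell$-bimodule linearity on $I$ to recover multiplicativity on all of $A$.
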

\begin{proof}
Let $\operatorname{GL}(A)$ be the general linear algebraic group
of the underlying $\Bbbk$-vector space of $A$, so that
$\operatorname{GL}(A)(R)=\operatorname{GL}_R(A_R)$ for every commutative
$\Bbbk$-algebra $R$. The conditions
$g(ab)=g(a)g(b)$ for $a,b\in A_R$,
$g\circ\eta_R=\eta_R$, and $\varepsilon_R\circ g=\varepsilon_R$
are polynomial equations in the matrix entries of
$g\in\operatorname{GL}_R(A_R)$. They therefore define a closed
subgroup scheme of $\operatorname{GL}(A)$ representing $\mathcal G_A$.
This group scheme is smooth by Cartier's theorem in characteristic zero
\cite[Theorem~3.23]{Mi17}.
Put $R=\Bbbk[\epsilon]/(\epsilon^2)$. An element of
$T_{\id}\mathcal G_A$ is represented by
$g\in\mathcal G_A(R)$ whose reduction modulo $\epsilon$ is
$\id$. Thus $g=\id+\epsilon\delta$ for a unique
$\Bbbk$-linear map $\delta:A\to A$. The defining equations
for $\mathcal G_A$ imply $\delta\in\mDer_\ell(A)$ and
$\varepsilon\delta=0$, equivalently $\delta(I)\subseteq I$.
This proves the tangent-space formula.
\end{proof}

For finite-dimensional $A$, we also write $\mathcal G_A$ for
its representing algebraic group. For every commutative algebra $R$,
scalar extension induces a natural identification
\[
 \operatorname{Tr}(A_R)
   \cong\operatorname{Tr}(A)\otimes_{\Bbbk}R
   =\mathbb A(\operatorname{Tr}(A))(R).
\]
The natural \emph{left} action is defined on $R$-points by
\[
 \begin{aligned}
 \mathcal G_A(R)\times
 \mathbb A(\operatorname{Tr}(A))(R)
 &\longrightarrow\mathbb A(\operatorname{Tr}(A))(R),\\
 (\alpha,\pi_{A_R}(u))
 &\longmapsto\pi_{A_R}(\alpha(u)).
 \end{aligned}
\]
Naturality in $R$ gives an algebraic group action
\[
 \mathcal G_A\times_{\Bbbk}
 \mathbb A(\operatorname{Tr}(A))
 \longrightarrow\mathbb A(\operatorname{Tr}(A)).
\]
For $x\in\operatorname{Tr}(A)$, write $\mathcal O_{A,x}$ for
the orbit subscheme $\mathcal O_{\mathcal G_A,x}\subseteq
\mathbb A(\operatorname{Tr}(A))$.

\begin{lemma}\label{lem:aug-aut-orbit}
Let $A$ be a finite-dimensional $\ell$-augmented algebra.
For $x\in\operatorname{Tr}(A)$, the orbit subscheme
$\mathcal O_{A,x}$ is smooth and locally closed, and
\[
 T_x\mathcal O_{A,x}
   =\{\delta\cdot x:\delta\in
             \mDer^{\mathrm{aug}}_\ell(A)\}.
\]
\end{lemma}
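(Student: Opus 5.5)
We derive the statement from the general facts about orbits of smooth algebraic groups recalled at the start of Section~\ref{sec:alg-group-prelim}, combined with Lemma~\ref{lem:aug-aut-group}.

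\emph{Local closedness.} By Lemma~\ref{lem:aug-aut-group}, $\mathcal G_A$ is an affine algebraic group of finite type. It acts on the finite-type scheme $\mathbb A(\operatorname{Tr}(A))$, so the orbit through the $\Bbbk$-rational point $x$ is locally closed by \cite[Proposition~1.65(b)]{Mi17}.

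\emph{Smoothness.} Consider the orbit map
\[
 \mu_x:\mathcal G_A\to\mathbb A(\operatorname{Tr}(A)),\qquad g\mapsto g\cdot x.
\]
It factors through $\mathcal O_{A,x}$ because $\mathcal G_A$ is reduced, being smooth. The induced morphism $\mathcal G_A\to\mathcal O_{A,x}$ is faithfully flat and identifies $\mathcal O_{A,x}$ with the quotient $\mathcal G_A/\operatorname{Stab}_x$ (\cite[Proposition~7.17]{Mi17}). In characteristic zero the stabilizer is smooth by Cartier's theorem, so the quotient of the smooth group $\mathcal G_A$ is smooth. Alternatively, the reduced orbit is homogeneous under $\mathcal G_A$ after base change to $\bar\Bbbk$, and it has a smooth dense open locus since $\Bbbk$ is perfect; homogeneity then gives smoothness everywhere, and smoothness descends from $\bar\Bbbk$ to $\Bbbk$.

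\emph{Tangent space.} The map $\mathcal G_A\to\mathcal O_{A,x}$ is a smooth surjection, since it is a $\operatorname{Stab}_x$-torsor with smooth fibres. Hence its differential at $\id$,
\[
 d\mu_x:T_{\id}\mathcal G_A\to T_x\mathcal O_{A,x},
\]
is surjective, and $T_x\mathcal O_{A,x}=d\mu_x(T_{\id}\mathcal G_A)$. To compute $d\mu_x$, take $R=\Bbbk[\epsilon]/(\epsilon^2)$ and $g=\id+\epsilon\delta$ with $\delta\in\mDer^{\mathrm{aug}}_\ell(A)$, which is legitimate by Lemma~\ref{lem:aug-aut-group}. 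Writing $x=\pi_A(u)$, we get
\[
 g\cdot x=\pi_{A_R}(u+\epsilon\delta(u))=x+\epsilon\,\delta\cdot x.
\]
Thus $d\mu_x(\delta)=\delta\cdot x$, which gives the stated formula.

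The main obstacle is justifying surjectivity of the differential, that is, smoothness of the orbit map onto the reduced orbit over a non-closed field. The torsor/quotient argument with a smooth stabilizer in characteristic zero handles this. A cleaner alternative is a dimension count: $\dim T_x\mathcal O_{A,x}=\dim\mathcal O_{A,x}=\dim\mathcal G_A-\dim\operatorname{Stab}_x$. Since $\ker d\mu_x=\operatorname{Lie}(\operatorname{Stab}_x)$ has dimension $\dim\operatorname{Stab}_x$ (the stabilizer being smooth), comparing dimensions forces $d\mu_x$ to be onto.
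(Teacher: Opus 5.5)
Your proposal is correct and follows essentially the same route as the paper. Both arguments use faithful flatness of $\mathcal G_A\to\mathcal O_{A,x}$ from Milne, smoothness of the stabilizer by Cartier's theorem, smoothness of the orbit map, and hence surjectivity of the differential at the identity; the paper checks smoothness fibrewise via $H_\Omega\cong(\mu_x)_\Omega^{-1}(y)$, whereas you phrase it as a torsor statement, and your dual-number computation $g\cdot x=x+\epsilon\,\delta\cdot x$ and dimension-count alternative simply make explicit what the paper states without detail.
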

\begin{proof}
By \cite[Proposition~7.4(b)]{Mi17}, the orbit morphism
$\mu_x:\mathcal G_A\to\mathcal O_{A,x}$ is faithfully flat,
and $\mathcal O_{A,x}$ is smooth.
Let $H\subseteq\mathcal G_A$ be the stabilizer subgroup scheme of $x$.
By Cartier's theorem in characteristic zero, $H$ is smooth
\cite[Theorem~3.23]{Mi17}.
For any algebraically closed field extension $\Omega/\Bbbk$ and
$y=g\cdot x\in\mathcal O_{A,x}(\Omega)$, multiplication by $g$
induces an isomorphism
\[
 H_\Omega\xrightarrow{\sim}(\mu_x)_\Omega^{-1}(y),
 \qquad h\longmapsto gh.
\]
Thus every geometric fiber of $\mu_x$ is smooth. Since $\mu_x$ is flat and of finite presentation, it is smooth.
Consequently, its differential at the identity is surjective:
\[
 T_{\id}\mu_x:T_{\id}\mathcal G_A
 \twoheadrightarrow T_x\mathcal O_{A,x}.
\]
Under the identification
$T_{\id}\mathcal G_A\cong\mDer^{\mathrm{aug}}_\ell(A)$,
this differential sends $\delta$ to $\delta\cdot x$, as required.
\end{proof}

\section{Finite-dimensional trace spaces}\label{sec:infinitesimal}
This section is devoted to establishing a criterion for trace-finiteness. A key tool is
the linear map that each potential induces from the space of
derivations of the complete path algebra to its trace space.
We apply the theory of PC vector spaces to study the images of
these maps and obtain the desired criterion.

\begin{prop}\label{prop:trace-criteria}
Let $\Phi\in\pi(\mm^2)$. The following conditions are equivalent:
\begin{enumerate}
\item $\Phi$ is trace-finite.
\item $\displaystyle
 \dim_\Bbbk\frac{\pi(\mm)}{\pi(\mJ(\Phi))}<\infty$.
\item $\displaystyle
 \dim_\Bbbk\frac{\pi(\mm^2)}{\pi(\mm\mJ(\Phi))}<\infty$.
\item There is an $N\geq1$ such that
 $\pi(\mm^{N+1})\subseteq\pi(\mm^2\mJ(\Phi))$.
\end{enumerate}
\end{prop}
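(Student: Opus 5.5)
The plan is to rewrite every space in the statement as the image of a single continuous linear map between PC spaces, so that Proposition~\ref{prop:pc} gives closedness, and then to argue by finite codimension bookkeeping.

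\emph{Step 1: images of the cyclic map.} For a subspace $W\subseteq\mF$ closed under left multiplication by $\ell$, put
\[
 c_W:\ W^{Q_1}\longrightarrow\operatorname{Tr}(\mF),\qquad (u_a)_{a}\longmapsto\sum_{a\in Q_1}\pi\bigl(u_a\,\partial_a(\Phi)\bigr).
\]
Since $\pi(u f v)=\pi(vu f)$, the image of the algebraic two-sided ideal generated by the $\partial_a(\Phi)$ under $\pi$ equals $c_{\mF}(\mF^{Q_1})$. The map $c_{\mF}$ is continuous, and $\mF^{Q_1}$ is PC, so Proposition~\ref{prop:pc}(1) shows this image is closed. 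Because $\pi$ is continuous and $\mJ(\Phi)$ is the closure of that ideal, $\pi(\mJ(\Phi))$ lies in the closure of the image, which is the image itself. Hence $\pi(\mJ(\Phi))=c_{\mF}(\mF^{Q_1})$.

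The same argument, using the closed subspaces $\mm$ and $\mm^2$ of $\mF$, gives
\[
 \pi(\mm\mJ(\Phi))=c_{\mm}(\mm^{Q_1}),\qquad \pi(\mm^2\mJ(\Phi))=c_{\mm^2}\bigl((\mm^2)^{Q_1}\bigr).
\]
In particular, all three subspaces are closed in $\operatorname{Tr}(\mF)$. From $\mF=\ell\oplus\mm$ and $\mm=\ell Q_1\ell\oplus\mm^2$, each inclusion
\[
 \pi(\mm^2\mJ)\subseteq\pi(\mm\mJ)\subseteq\pi(\mJ)
\]
has finite codimension. Indeed, $\pi(\mJ)$ is spanned modulo $\pi(\mm\mJ)$ by the finitely many classes $\pi(e_i\partial_a\Phi)$, and $\pi(\mm\mJ)$ is spanned modulo $\pi(\mm^2\mJ)$ by the finitely many classes $\pi(b\,\partial_a\Phi)$ with $b\in Q_1$. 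I will treat this step as routine; it is where the closedness needed later comes from.

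\emph{Step 2: the cheap equivalences.} Condition (1) is equivalent to (2): we have $\operatorname{Tr}(\Lambda(\Phi))=\mF/(\mJ(\Phi)+[\mF,\mF])\cong\operatorname{Tr}(\mF)/\pi(\mJ(\Phi))$, and $\operatorname{Tr}(\mF)=\pi(\ell)\oplus\pi(\mm)$ with $\pi(\ell)$ finite dimensional and $\mJ(\Phi)\subseteq\mm$. Condition (2) is equivalent to (3): the space $\pi(\mm)/\pi(\mm^2)$ is finite dimensional, spanned by the loops, and $\pi(\mJ)/\pi(\mm\mJ)$ is finite dimensional by Step 1. For (4)$\Rightarrow$(3), note that $\pi(\mm^2)/\pi(\mm^{N+1})$ is finite dimensional and $\pi(\mm^2\mJ)\subseteq\pi(\mm\mJ)$.

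\emph{Step 3: (3)$\Rightarrow$(4), the main point.} By (3) and Step 1, the quotient $V=\pi(\mm^2)/\pi(\mm^2\mJ(\Phi))$ is finite dimensional. Consider the decreasing chain of subspaces
\[
 \bigl(\pi(\mm^n)+\pi(\mm^2\mJ)\bigr)/\pi(\mm^2\mJ)\subseteq V,\qquad n\ge2.
\]
Since $V$ is finite dimensional, the chain stabilizes at some $n=N+1$. Its stable value is therefore the intersection over $n$, namely
\[
 \bigcap_n\bigl(\pi(\mm^2\mJ)+\pi(\mm^n)\bigr)\Big/\pi(\mm^2\mJ).
\]
The sets $\pi(\mm^n)$ form a neighbourhood basis of zero in $\operatorname{Tr}(\mF)$, so this intersection equals the closure of $\pi(\mm^2\mJ)$. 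By Step 1 that subspace is already closed, so the stable value is $0$. This gives $\pi(\mm^{N+1})\subseteq\pi(\mm^2\mJ(\Phi))$.

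The only delicate point is this use of closedness. Without Proposition~\ref{prop:pc}, finite codimension alone would not force the inclusion, as the Remark on $\Bbbk^{\mathbb N}$ shows. The thing to check carefully is therefore that each relevant $\pi(\cdot\,\mJ)$ really is the image of a closed PC subspace under a continuous map. That holds because $\mm^k$ is closed in $\mF$ and $\partial_a(\Phi)$ is a fixed element.
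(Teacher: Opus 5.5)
Your proposal is correct and follows essentially the paper's proof. Your maps $c_{\mm^r}$ are the cyclic tangent map $\Phi_\#$ on $\mDer_\ell^r(\mF)$, parametrized by arrow values, so Step 1 is Lemma~\ref{lem:cycimage} together with the finite-codimension counts the paper gets from $\mDer_\ell^r(\mF)/\mDer_\ell^{r+1}(\mF)$; your stabilizing-chain argument in Step 3 is just an inlined proof of Proposition~\ref{prop:pc-open}, namely that a closed subspace of finite codimension is open.
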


To prove the proposition, we describe the cyclic image of the
Jacobian ideal in terms of $\mDer_\ell(\mF)$.

We first equip $\mDer_\ell(\mF)$ with the structure of a PC
vector space. Every derivation $\xi\in\mDer_\ell(\mF)$ is automatically
continuous. Indeed, the Leibniz rule gives
\[
 \xi(\mm^n)\subseteq
 \sum_{j=0}^{n-1}\mm^j\xi(\mm)\mm^{n-1-j}
 \subseteq\mm^{n-1}\qquad(n\geq1).
\]
For $r\geq0$, put
\[
 \mDer_\ell^r(\mF)=
 \{\xi\in\mDer_\ell(\mF):\xi(\mm)\subseteq\mm^r\}.
\]
Thus $\mDer_\ell^0(\mF)=\mDer_\ell(\mF)$, while
$\mDer_\ell^1(\mF)=\mDer_\ell^{\mathrm{aug}}(\mF)$
consists of the augmentation-preserving derivations.
We endow $\mDer_\ell(\mF)$ with the linear topology having
$\{\mDer_\ell^r(\mF)\}_{r\geq0}$ as a neighbourhood basis of
zero.
Arrow values identify these topological vector spaces as
\begin{equation}\label{eq:der-product}
 \mDer_\ell^r(\mF)\cong
 \prod_{a\in Q_1}e_{t(a)}\mm^r e_{s(a)}
 \qquad(r\geq0).
\end{equation}
Indeed, any assignment $a\mapsto f_a\in e_{t(a)}\mF e_{s(a)}$ for $a\in Q_1$ extends by
the Leibniz rule to the ordinary path algebra, with filtration
degree at least $-1$, and hence continuously to $\mF$.
Automatic continuity gives uniqueness. The condition defining
$\mDer_\ell^r(\mF)$ is equivalent to requiring every arrow value
to lie in $\mm^r$. Each $\mDer_\ell^r(\mF)$ has finite codimension in
$\mDer_\ell(\mF)$: the quotient records only arrow values
modulo $\mm^r$. The product description also shows that the
filtration topology is complete and Hausdorff. Hence
$\mDer_\ell(\mF)$ is PC, as are its open subspaces
$\mDer_\ell^r(\mF)$.

Every derivation preserves the closed commutator subspace and
hence induces a continuous linear map on $\operatorname{Tr}(\mF)$.
For $\delta\in\mDer_\ell(\mF)$ and $\Phi=\pi(f)$, we write
$\delta(\Phi)=\delta\Phi:=\pi(\delta(f))$; this is independent
of the representative $f$. A potential $\Phi$ defines a linear map
\[
 \Phi_\#:\mDer_\ell(\mF)\longrightarrow\operatorname{Tr}(\mF),
 \qquad \xi\longmapsto\xi(\Phi)
 =\pi\left(\sum_a\xi(a)\partial_a\Phi\right).
\]
The equality follows from the Leibniz rule and cyclic rotation.
We call $\Phi_\#$ the \emph{cyclic tangent map} of $\Phi$.
For $\Phi\in\pi(\mm^2)$, its restriction to
$\mDer_\ell^1(\mF)$ takes values in $\pi(\mm^2)$.

\begin{lemma}\label{lem:cycimage}
For every $\Phi\in\operatorname{Tr}(\mF)$, the linear map
$\Phi_\#:\mDer_\ell(\mF)\to\operatorname{Tr}(\mF)$ is continuous and
\[
 \Phi_\#(\mDer_\ell^r(\mF))=\pi(\mm^r\mJ(\Phi))
 \qquad(r\geq0).
\]
Moreover, these subspaces are closed in $\operatorname{Tr}(\mF)$.
\end{lemma}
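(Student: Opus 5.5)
Three things need to be shown: continuity of $\Phi_\#$, the identity $\Phi_\#(\mDer_\ell^r(\mF))=\pi(\mm^r\mJ(\Phi))$, and closedness of these subspaces. The plan is to get continuity from the estimate already recorded, closedness from Proposition~\ref{prop:pc}, and the identity by comparing both sides with the non-closed ideal and then passing to closures.

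\emph{Continuity.} The Leibniz estimate gives $\xi(\mm^n)\subseteq\mm^{n+r-1}$ for $\xi\in\mDer_\ell^r(\mF)$. If $\Phi=\pi(f)$ with $f=f_{<n}+f_{\geq n}$, then
\[
 \xi(\Phi)=\pi(\xi(f_{<n}))+\pi(\xi(f_{\geq n})).
\]
Since $f_{<n}\in\mm^0$, the first term lies in $\pi(\mm^{r-1})$, and the second lies in $\pi(\mm^{n-1})$. To land in a prescribed neighbourhood $\pi(\mm^k)$, it is cleaner to argue linearly: $\Phi_\#$ is the sum of $\xi\mapsto\xi(\pi(f_{<n}))$ and $\xi\mapsto\xi(\pi(f_{\geq n}))$. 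The second map takes all of $\mDer_\ell(\mF)$ into $\pi(\mm^{n-1})$. The first sends $\mDer_\ell^{k+1}(\mF)$ into $\pi(\mm^k)$. Taking $n=k+1$ gives $\Phi_\#(\mDer_\ell^{k+1}(\mF))\subseteq\pi(\mm^k)$, which is continuity.

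\emph{Closedness.} The space $\mDer_\ell^r(\mF)$ is PC, and $\operatorname{Tr}(\mF)$ is PC by Corollary~\ref{cor:closed-commutators} and Proposition~\ref{prop:pc-subquot}. Hence $\Phi_\#(\mDer_\ell^r(\mF))$ is closed by Proposition~\ref{prop:pc}(1). Once the identity is proved, closedness of $\pi(\mm^r\mJ(\Phi))$ follows.

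\emph{The identity.} First I will show that $\Phi_\#(\mDer_\ell^r(\mF))\subseteq\pi(\mm^r\mJ(\Phi))$. By the formula $\xi(\Phi)=\pi\bigl(\sum_a\xi(a)\partial_a\Phi\bigr)$ and $\xi(a)\in\mm^r$, each term lies in $\mm^r\mJ(\Phi)$, since the product $\mm^r\cdot\mJ(\Phi)$ is a finite sum by the grouping remark on $\mm^r$.

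For the reverse inclusion, let $I_0$ be the algebraic two-sided ideal generated by the $\partial_a\Phi$. A typical element of $\mm^r I_0$ is a finite sum of terms $u\,g\,\partial_a\Phi\,h$ with $u\in\mm^r$. Cyclically,
\[
 \pi(ug\,\partial_a\Phi\,h)=\pi(hug\,\partial_a\Phi).
\]
Here $hug\in\mm^r$, and we may replace it by $e_{s(a)}\,hug\,e_{t(a)}\in e_{s(a)}\mm^r e_{t(a)}$ without changing the class. This shows $\pi(\mm^r I_0)\subseteq\Phi_\#(\mDer_\ell^r(\mF))$.

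There is an orientation subtlety here. In the formula above, $\xi(a)$ must lie in $e_{t(a)}\mF e_{s(a)}$, while $\partial_a\Phi\in e_{s(a)}\mF e_{t(a)}$, so the product $\xi(a)\partial_a\Phi$ is a cycle at $t(a)$. I will check the sandwich accordingly and put the factor $hug$ on the correct side using cyclic rotation.

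\emph{Passing to closures.} This is the main obstacle. I need $\pi(\mm^r\mJ(\Phi))\subseteq\overline{\pi(\mm^r I_0)}$. Since $\mJ(\Phi)=\bigcap_n(I_0+\mm^n)$, any $x\in\mm^r\mJ(\Phi)$ is a finite sum $\sum p_i j_i$ with $p_i\in\mm^r$ paths and $j_i\in\mJ(\Phi)$. Approximating each $j_i$ modulo $\mm^n$ by elements of $I_0$ gives $x\in\mm^rI_0+\mm^{n+r}$ for every $n$. Because $\pi$ is continuous, $\pi(x)$ lies in the closure of $\pi(\mm^rI_0)$. That closure is contained in the closed set $\Phi_\#(\mDer_\ell^r(\mF))$, which yields the equality.
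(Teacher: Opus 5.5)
Your proposal is correct and follows essentially the paper's proof: the forward inclusion from the formula for $\Phi_\#$, closedness via Proposition~\ref{prop:pc}(1), and the reverse inclusion by cyclic rotation plus passage to closures, which you spell out more carefully than the paper does. Two small remarks: continuity is immediate from $\Phi_\#(\mDer_\ell^r(\mF))\subseteq\pi(\mm^r)$, since $\xi(a)\in\mm^r$, so the splitting $f=f_{<n}+f_{\geq n}$ is unnecessary; and the correct sandwich is $e_{t(a)}\,hug\,e_{s(a)}$, as your orientation remark anticipated.
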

\begin{proof}
For every $r\geq0$, the formula for $\Phi_\#$ gives
\[
 \Phi_\#(\mDer_\ell^r(\mF))\subseteq\pi(\mm^r\mJ(\Phi))\subseteq\pi(\mm^r).
\]
Hence $\Phi_\#$ is continuous, and Proposition~\ref{prop:pc}(1)
shows that $\Phi_\#(\mDer_\ell^r(\mF))$ is closed.
For $x\in\mm^r$ and $y\in\mF$, cyclic rotation gives
\[
 \pi(x\partial_a\Phi\,y)=\pi(yx\partial_a\Phi).
\]
Since only the component $e_{t(a)}yx e_{s(a)}\in\mm^r$
contributes on the right, the definition of $\mJ(\Phi)$ and
closedness give
$\pi(\mm^r\mJ(\Phi))\subseteq\Phi_\#(\mDer_\ell^r(\mF))$.
\end{proof}

\Needspace{12\baselineskip}
\begin{lemma}\label{lem:jacobi-trace}
For every $\Phi\in\pi(\mm^2)$, there is a natural isomorphism
\[
 \operatorname{Tr}(\Lambda(\Phi))
 \cong \ell\oplus\frac{\pi(\mm)}{\pi(\mJ(\Phi))}.
\]
\end{lemma}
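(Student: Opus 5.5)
The natural route is to compare trace spaces through the quotient map $p:\mF\to\Lambda(\Phi)=\mF/\mJ(\Phi)$. Since $p$ is a surjective algebra homomorphism, it maps $[\mF,\mF]$ onto $[\Lambda(\Phi),\Lambda(\Phi)]$. It therefore induces a surjection
\[
 \operatorname{Tr}(\mF)\twoheadrightarrow\operatorname{Tr}(\Lambda(\Phi)),
\]
whose kernel is the image of $\mJ(\Phi)+[\mF,\mF]$, namely $\pi(\mJ(\Phi))$. This gives
\[
 \operatorname{Tr}(\Lambda(\Phi))\cong\operatorname{Tr}(\mF)/\pi(\mJ(\Phi)).
\]
The only point to check is that commutators of $\Lambda(\Phi)$ lift to commutators of $\mF$: $[p(u),p(v)]=p([u,v])$, and sums are handled linearly. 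No closure issue arises here, because $[\Lambda,\Lambda]$ is by definition the algebraic span.

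Next I would split off the vertex part. Since $\mF=\ell\oplus\mm$, we get $\operatorname{Tr}(\mF)=\pi(\ell)+\pi(\mm)$. Commutators $[u,v]$ have $\ell$-component $[\varepsilon(u),\varepsilon(v)]=0$, because $\ell$ is commutative. Hence $[\mF,\mF]\subseteq\mm$, and both $\pi|_\ell$ is injective and $\pi(\ell)\cap\pi(\mm)=0$. Indeed, if $e+m\in[\mF,\mF]$ with $e\in\ell$ and $m\in\mm$, applying $\varepsilon$ gives $e=0$. Thus $\operatorname{Tr}(\mF)=\ell\oplus\pi(\mm)$.

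Because $\Phi\in\pi(\mm^2)$, each $\partial_a\Phi$ lies in $\mm$, so $\mJ(\Phi)\subseteq\mm$ (as noted after Lemma~\ref{lem:jac-invariance}) and $\pi(\mJ(\Phi))\subseteq\pi(\mm)$. Quotienting the decomposition by this subspace of the second summand yields
\[
 \operatorname{Tr}(\Lambda(\Phi))\cong\ell\oplus\frac{\pi(\mm)}{\pi(\mJ(\Phi))}.
\]
For naturality, the $\ell$-summand is the image of $\eta$ composed with $\pi_\Lambda$, and the second summand is the image of the augmentation ideal $\mm/\mJ(\Phi)$. Both are compatible with the augmentation-preserving isomorphisms of Lemma~\ref{lem:jac-invariance}.

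I expect no genuine obstacle; the step needing most care is the kernel identification. It is purely algebraic, so it does not require Corollary~\ref{cor:closed-commutators}, though that corollary together with Lemma~\ref{lem:cycimage} shows the summand $\pi(\mJ(\Phi))$ is closed, so the isomorphism is topological as well if desired.
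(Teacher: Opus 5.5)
Your proposal is correct and follows the paper's proof: the quotient map induces a surjection $\operatorname{Tr}(\mF)\to\operatorname{Tr}(\Lambda(\Phi))$ with kernel $\pi(\mJ(\Phi))$, and the splitting $\operatorname{Tr}(\mF)=\ell\oplus\pi(\mm)$ together with $\mJ(\Phi)\subseteq\mm$ gives the result. You are right that the kernel identification is purely algebraic; the paper cites Corollary~\ref{cor:closed-commutators} at that point, but this is not needed for the vector-space isomorphism.
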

\begin{proof}
Let $p_\Phi:\mF\to\Lambda(\Phi)$ be the quotient map.
By Corollary~\ref{cor:closed-commutators}, the commutator
subspaces of $\mF$ and $\Lambda(\Phi)$ are closed.
Since $p_\Phi$ is surjective, commutators in $\Lambda(\Phi)$
lift to commutators in $\mF$. Hence the induced surjection
$(p_\Phi)_{\mathrm{tr}}:\operatorname{Tr}(\mF)\to
\operatorname{Tr}(\Lambda(\Phi))$ has kernel
\[
 \ker (p_\Phi)_{\mathrm{tr}}
 =\pi\bigl(p_\Phi^{-1}([\Lambda(\Phi),\Lambda(\Phi)])\bigr)
 =\pi(\mJ(\Phi)).
\]
Since $\operatorname{Tr}(\mF)=\ell\oplus\pi(\mm)$ and
$\mJ(\Phi)\subseteq\mm$, the assertion follows.
\end{proof}

\begin{proof}[Proof of Proposition~\ref{prop:trace-criteria}]
\emph{(1)$\Leftrightarrow$(2).}
This follows from Lemma~\ref{lem:jacobi-trace}.

\emph{(2)$\Leftrightarrow$(3).}
Since $\pi(\mm\mJ(\Phi))$ is contained in both $\pi(\mm^2)$ and
$\pi(\mJ(\Phi))$, we have two short exact sequences:
\[
\begin{aligned}
0&\longrightarrow
\frac{\pi(\mm^2)}{\pi(\mm\mJ(\Phi))}
\longrightarrow
\frac{\pi(\mm)}{\pi(\mm\mJ(\Phi))}
\longrightarrow
\frac{\pi(\mm)}{\pi(\mm^2)}
\longrightarrow 0,\\
0&\longrightarrow
\frac{\pi(\mJ(\Phi))}{\pi(\mm\mJ(\Phi))}
\longrightarrow
\frac{\pi(\mm)}{\pi(\mm\mJ(\Phi))}
\longrightarrow
\frac{\pi(\mm)}{\pi(\mJ(\Phi))}
\longrightarrow 0.
\end{aligned}
\]
The quotient $\pi(\mm)/\pi(\mm^2)$ is finite dimensional because
$Q_1$ is finite. To see that
$\pi(\mJ(\Phi))/\pi(\mm\mJ(\Phi))$ is also finite dimensional, use
the coefficient descriptions
\[
 \Phi_\#(\mDer_\ell(\mF))=\pi(\mJ(\Phi)),\qquad
 \Phi_\#(\mDer_\ell^1(\mF))=\pi(\mm\mJ(\Phi))
\]
from Lemma~\ref{lem:cycimage}. The quotient of these derivation
spaces records only constant values on loop arrows and is therefore
finite dimensional. Thus (2) and (3) are each equivalent to finite dimensionality of
$\pi(\mm)/\pi(\mm\mJ(\Phi))$.

\emph{(3)$\Leftrightarrow$(4).}
By Lemma~\ref{lem:cycimage}, the subspace
$\pi(\mm^2\mJ(\Phi))$ is closed in $\pi(\mm^2)$.
Moreover,
\[
\dim_\Bbbk\frac{\pi(\mm\mJ(\Phi))}{\pi(\mm^2\mJ(\Phi))}<\infty,
\]
since this quotient is an image of
$\mDer_\ell^1(\mF)/\mDer_\ell^2(\mF)$.
Consequently, (3) holds if and only if
$\pi(\mm^2\mJ(\Phi))$ has finite codimension in $\pi(\mm^2)$,
if and only if it is open in $\pi(\mm^2)$ by
Proposition~\ref{prop:pc-open}, if and only if it contains
$\pi(\mm^{N+1})$ for some $N\geq1$.
\end{proof}

For a field extension $\mathbb K/\Bbbk$, put
$\mF_{\mathbb K}=\mathbb K\langle\langle Q\rangle\rangle$, and denote
its arrow ideal and cyclic projection by $\mm_{\mathbb K}$ and
$\pi_{\mathbb K}$, respectively. Write $\Phi_{\mathbb K}$ for
the scalar extension of a potential $\Phi$.

\begin{prop}\label{prop:trace-base-extension}
Let $\mathbb K/\Bbbk$ be a field extension and
$\Phi\in\pi(\mm^2)$.
\begin{enumerate}
\item For fixed integers $r,N\geq0$, the inclusion
\[
 \pi(\mm^{N+1})\subseteq\pi(\mm^r\mJ(\Phi))
\]
holds if and only if the corresponding inclusion holds over
$\mathbb K$.
\item $\Phi$ is trace-finite if and only if $\Phi_{\mathbb K}$ is
 trace-finite. In this case, their trace spaces have the same dimension.
\end{enumerate}
\end{prop}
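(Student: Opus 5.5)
The plan is to reduce everything to finite-dimensional linear algebra in a single jet space, where base change is harmless.

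\emph{Reduction of (1) to one jet space.} The subspace $\pi(\mm^r\mJ(\Phi))=\Phi_\#(\mDer^r_\ell(\mF))$ is closed by Lemma~\ref{lem:cycimage}. Hence the inclusion $\pi(\mm^{N+1})\subseteq\pi(\mm^r\mJ(\Phi))$ can be tested modulo each $\pi(\mm^{n+1})$ for $n\geq N$; this uses only that a closed subspace $L$ equals $\bigcap_n(L+\pi(\mm^{n+1}))$.

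\emph{The truncated linear map.} For fixed $n$, consider the $\Bbbk$-linear map
\[
 \Phi^{(n)}_\#:\mDer^r_\ell(\mF)/\mDer^{n+2}_\ell(\mF)\longrightarrow \pi(\mm^{r})/\pi(\mm^{n+1}),
\]
which is well defined because $\Phi\in\pi(\mm^2)$ gives $\Phi_\#(\mDer^{n+2}_\ell)\subseteq\pi(\mm^{n+2}\mJ)\subseteq\pi(\mm^{n+1})$. Source and target are finite dimensional. Source, target, and map are all defined over $\Bbbk$, and both have bases given by paths and cycles. The same construction over $\mathbb K$ is the scalar extension $\Phi^{(n)}_\#\otimes_\Bbbk\mathbb K$, because $\Phi_{\mathbb K}$ has the same coefficients and $\pi_{\mathbb K}(\mm_{\mathbb K}^j)$ is the $\mathbb K$-span of the same cycle classes. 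The inclusion modulo $\pi(\mm^{n+1})$ says exactly that the image of $\Phi^{(n)}_\#$ contains the fixed $\Bbbk$-rational subspace $\pi(\mm^{N+1})/\pi(\mm^{n+1})$. Since images commute with the exact functor $-\otimes_\Bbbk\mathbb K$, and an inclusion $E\subseteq F$ of $\Bbbk$-subspaces holds iff $E_{\mathbb K}\subseteq F_{\mathbb K}$ (faithful flatness of field extensions), the inclusion holds over $\Bbbk$ iff it holds over $\mathbb K$ for every $n$. Over $\mathbb K$, closedness also holds by Lemma~\ref{lem:cycimage} applied to $\mF_{\mathbb K}$, so the same reduction applies there. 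This proves (1).

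\emph{Part (2).} The criterion in Proposition~\ref{prop:trace-criteria}(4) is an existential statement about such inclusions with $r=2$. By (1), it holds over $\Bbbk$ iff it holds over $\mathbb K$, so trace-finiteness is preserved and reflected.

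\emph{Equality of dimensions.} For this I would use Lemma~\ref{lem:jacobi-trace}. Choose $N$ with $\pi(\mm^{N+1})\subseteq\pi(\mm^2\mJ(\Phi))\subseteq\pi(\mJ(\Phi))$; the same $N$ works over $\mathbb K$ by (1). Then
\[
 \pi(\mm)/\pi(\mJ(\Phi))\cong \bigl(\pi(\mm)/\pi(\mm^{N+1})\bigr)\big/\operatorname{im}\Phi^{(N)}_\#,
\]
with $r=0$ in the truncated map. The dimension of this cokernel of a $\Bbbk$-linear map between finite-dimensional spaces is unchanged by scalar extension, and the summand $\ell$ has dimension $|Q_0|$ over both fields.

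The main obstacle is the compatibility bookkeeping. One must check that the truncated quotients over $\mathbb K$ really are the scalar extensions of those over $\Bbbk$, in particular that $\pi_{\mathbb K}(\mm_{\mathbb K}^j\mJ(\Phi_{\mathbb K}))$ modulo $\pi_{\mathbb K}(\mm_{\mathbb K}^{n+1})$ is the $\mathbb K$-span of the corresponding $\Bbbk$-image. This follows from the derivation description in Lemma~\ref{lem:cycimage}, since $\mDer^r_\ell$ modulo $\mDer^{n+2}_\ell$ is spanned by derivations sending one arrow to one path.
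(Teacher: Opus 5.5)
Your proposal is correct and follows essentially the same route as the paper: closedness from Lemma~\ref{lem:cycimage} reduces the inclusion to the finite jet quotients $\operatorname{Tr}(\mF)/\pi(\mm^{n+1})$, the relevant subspaces there over $\mathbb K$ are scalar extensions of those over $\Bbbk$, and exactness and faithfulness of $\mathbb K\otimes_\Bbbk-$ then give (1), while (2) follows from (1) and Lemma~\ref{lem:jacobi-trace}. Your truncated derivation map spells out the paper's remark that cyclic derivatives commute with scalar extension in each finite quotient, and your use of Proposition~\ref{prop:trace-criteria}(4) in place of the paper's $r=0$ openness criterion is an inessential variation; just take the truncated map's target to be all of $\operatorname{Tr}(\mF)/\pi(\mm^{n+1})$ and restrict to $n$ large, so the quotients $\mDer^r_\ell(\mF)/\mDer^{n+2}_\ell(\mF)$ and $\pi(\mm^{N+1})/\pi(\mm^{n+1})$ make sense for arbitrary $r,N$.
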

\begin{proof}
For every $n\geq1$, put
$U_n=\operatorname{Tr}(\mF)/\pi(\mm^{n+1})$ and define subspaces
\[
V_n=\frac{\pi(\mm^{N+1})+\pi(\mm^{n+1})}{\pi(\mm^{n+1})},\qquad
W_n=\frac{\pi(\mm^r\mJ(\Phi))+\pi(\mm^{n+1})}{\pi(\mm^{n+1})}.
\]
Define $U_{n,\mathbb K}$, $V_{n,\mathbb K}$, and $W_{n,\mathbb K}$
analogously over $\mathbb K$.
The path basis modulo cyclic rotation identifies
$U_{n,\mathbb K}$ with $\mathbb K\otimes_\Bbbk U_n$.
Under this identification,
\[
V_{n,\mathbb K}=\mathbb K\otimes_\Bbbk V_n,
\qquad
W_{n,\mathbb K}=\mathbb K\otimes_\Bbbk W_n,
\]
since cyclic derivatives and their products commute with scalar extension
in each finite quotient. By Lemma~\ref{lem:cycimage},
$\pi(\mm^r\mJ(\Phi))$ is closed in $\operatorname{Tr}(\mF)$. Thus
\[
\pi(\mm^r\mJ(\Phi))
=\bigcap_{n\geq N}
\bigl(\pi(\mm^r\mJ(\Phi))+\pi(\mm^{n+1})\bigr).
\]
Similarly, the same equality holds over $\mathbb K$.
Thus, to prove (1), it suffices to show
\[
V_n\subseteq W_n\quad\Longleftrightarrow\quad
V_{n,\mathbb K}\subseteq W_{n,\mathbb K}
\qquad(n\geq N),
\]
which follows from the exactness and faithfulness of the functor
$\mathbb K\otimes_\Bbbk-$.

For (2), the trace space is finite dimensional if and only if
$\pi(\mJ(\Phi))$ is open in $\operatorname{Tr}(\mF)$, by
Lemma~\ref{lem:jacobi-trace} and Proposition~\ref{prop:pc-open}.
Equivalently,
\[
 \pi(\mm^{N+1})\subseteq\pi(\mJ(\Phi))
\]
for some $N\geq1$. By (1) with $r=0$, this condition is preserved
and reflected by field extension. When it holds, with $r=0$ we have
\[
\operatorname{Tr}(\Lambda(\Phi))\cong U_N/W_N,\qquad
\operatorname{Tr}(\Lambda(\Phi_{\mathbb K}))
\cong U_{N,\mathbb K}/W_{N,\mathbb K}.
\]
The preceding identifications yield
$U_{N,\mathbb K}/W_{N,\mathbb K}
\cong\mathbb K\otimes_\Bbbk(U_N/W_N)$.
Consequently,
$\operatorname{Tr}(\Lambda(\Phi_{\mathbb K}))
\cong\mathbb K\otimes_\Bbbk\operatorname{Tr}(\Lambda(\Phi))$,
and hence their dimensions are equal.
\end{proof}

\section{Finite determinacy}\label{sec:fd}
In this section, we establish the finite-determinacy criterion,
namely, that a potential is finitely determined if and only if it is
trace-finite. For necessity,
we exploit the geometry of algebraic-group orbits on finite jet spaces.
For sufficiency, we use the cyclic tangent map $\Phi_\#$ introduced in Section~\ref{sec:infinitesimal}
to eliminate higher-order perturbations through successive formal
changes of arrows. The interplay between formal elimination and
finite-dimensional algebraic geometry is a key feature of our proof.

For $n\geq0$, set $\mF^{[n]}=\mF/\mm^{n+1}$.
It inherits the $\ell$-augmentation of $\mF$, with augmentation
ideal $\mm/\mm^{n+1}$. Let
$q_n:\operatorname{Tr}(\mF)\to\operatorname{Tr}(\mF^{[n]})$
be the natural projection.
Write $\Phi^{[n]}=q_n(\Phi)$. The path-length grading identifies
$\Phi^{[n]}$ with the polynomial truncation $\Phi_{\leq n}$, and
\[
 \ker q_n=\pi(\mm^{n+1}),\qquad
 q_n(\pi(\mm^2))\cong\pi(\mm^2)/\pi(\mm^{n+1})
 \quad(n\geq1).
\]

\begin{definition}\label{def:finite-determinacy}
For $N\geq0$, a potential $\Phi$ is \emph{$N$-determined} if $\Psi^{[N]}=\Phi^{[N]}$ implies
that $\Psi$ is right equivalent to $\Phi$, for every potential $\Psi$. It is
\emph{finitely determined} if it is $N$-determined for some $N\geq0$.
\end{definition}

\Needspace{6\baselineskip}
We begin with a necessary condition for $N$-determinacy. Since an
$N$-determined potential is $N'$-determined for every $N'\geq N$,
we may assume $N\geq1$.

\begin{prop}\label{prop:fd-necessary}
Let $\Phi\in\pi(\mm^2)$ and $N\geq1$. If $\Phi$ is $N$-determined, then
\begin{equation}\label{eq:necessary-inclusion}
 \pi(\mm^{N+1})\subseteq\pi(\mm\mJ(\Phi)).
\end{equation}
\end{prop}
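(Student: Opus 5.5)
The plan is to pass to a finite jet space, where the group-orbit geometry of Section~\ref{sec:alg-group-prelim} applies, and then take a limit over $n$.

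\emph{Step 1 (jet space and orbit).} Fix $n\geq N$ and put $A=\mF^{[n]}$, a finite-dimensional $\ell$-augmented algebra. Let $x=\Phi^{[n]}\in\operatorname{Tr}(A)$ and $E=q_n(\pi(\mm^{N+1}))\subseteq\operatorname{Tr}(A)$. Take any $y\in x+E$. Lifting it gives a potential $\Psi=\Phi+\theta$ with $\theta\in\pi(\mm^{N+1})$ and $q_n(\Psi)=y$. Then $\Psi^{[N]}=\Phi^{[N]}$, so $N$-determinacy yields $\alpha\in\mAut_\ell(\mF)$ with $\alpha(\Phi)=\Psi$.

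By Lemma~\ref{lem:inverse}, $\alpha$ preserves $\mm$ and each $\mm^{n+1}$. It therefore descends to an $\ell$-augmented automorphism $\bar\alpha\in\mathcal G_A(\Bbbk)$ with $\bar\alpha\cdot x=y$. Hence $(x+\mathbb A(E))(\Bbbk)\subseteq\mathcal G_A(\Bbbk)\cdot x$.

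\emph{Step 2 (tangent space).} Lemma~\ref{lem:affine-orbit}, which only needs $\Bbbk$ infinite, gives $E\subseteq T_x\mathcal O_{A,x}$. Lemma~\ref{lem:aug-aut-orbit} identifies $T_x\mathcal O_{A,x}$ with $\{\delta\cdot x:\delta\in\mDer^{\mathrm{aug}}_\ell(A)\}$.

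We then compare with $\mF$. A derivation $\delta\in\mDer^{\mathrm{aug}}_\ell(A)$ is determined by its arrow values $\delta(a)\in e_{t(a)}(\mm/\mm^{n+1})e_{s(a)}$. Lifting these values to $\mm$ and using \eqref{eq:der-product} gives $\xi\in\mDer^1_\ell(\mF)$. The lift $\xi$ preserves $\mm^{n+1}$, because $\xi(\mm^{n+1})\subseteq\mm^{n+1}$ when $\xi(\mm)\subseteq\mm$. It therefore induces $\delta$ on $A$, and $\delta\cdot x=q_n(\xi(\Phi))$. So $T_x\mathcal O_{A,x}=q_n(\Phi_\#(\mDer^1_\ell(\mF)))$, and by Lemma~\ref{lem:cycimage} this equals $q_n(\pi(\mm\mJ(\Phi)))$. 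We conclude
\[
\pi(\mm^{N+1})\subseteq\pi(\mm\mJ(\Phi))+\pi(\mm^{n+1})\qquad\text{for all } n\geq N.
\]

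\emph{Step 3 (limit).} Intersecting over $n$ and using that $\pi(\mm\mJ(\Phi))$ is closed (Lemma~\ref{lem:cycimage}) gives \eqref{eq:necessary-inclusion}. Here we use that the $\pi(\mm^{n+1})$ form a neighbourhood basis of zero, so the closure of a subspace $L$ equals $\bigcap_n(L+\pi(\mm^{n+1}))$.

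\emph{Expected obstacle.} The delicate step is the identification in Step 2. Two points need care. First, the tangent-space formula must use augmentation-preserving derivations; this is why we get $\mm\mJ(\Phi)$ rather than $\mJ(\Phi)$. Second, every derivation of the truncated algebra must lift to $\mDer^1_\ell(\mF)$ compatibly with the induced action on traces, i.e.\ $q_n$ intertwines the trace actions of $\xi$ and $\delta$. Both follow from freeness of $\mF$ on the arrows, but they are what turns orbit geometry into the cyclic-tangent statement.

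A minor further point is that $\mathcal G_A(\Bbbk)$-orbits of $\Bbbk$-points are what Lemma~\ref{lem:affine-orbit} needs, which Step 1 supplies.
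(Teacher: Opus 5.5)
Your proposal is correct and follows the paper's proof essentially step for step. You place the rational points of $x_n+\mathbb A(E_n)$ in the $\mG_n(\Bbbk)$-orbit, apply Lemma~\ref{lem:affine-orbit} together with the tangent-space identification $q_n(\pi(\mm\mJ(\Phi)))$ (which you reprove inline and the paper packages as Lemma~\ref{lem:jet-orbit}, via surjectivity of $\mDer^1_\ell(\mF)\to\mDer^{\mathrm{aug}}_\ell(\mF^{[n]})$), and then pass to the limit using closedness of $\pi(\mm\mJ(\Phi))$.
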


We need to compute the tangent space of the orbit in each finite jet space.

\begin{lemma}\label{lem:jet-orbit}
Let $n\geq1$ and $\mG_n=\mathcal G_{\mF^{[n]}}$.
For $\Phi\in\pi(\mm^2)$, let
$\mathcal O_n(\Phi^{[n]})$ denote the orbit subscheme through
$\Phi^{[n]}$ under the natural action of $\mG_n$ on
$\mathbb A(\operatorname{Tr}(\mF^{[n]}))$.
Then
\begin{equation}\label{eq:jet-orbit-tangent}
 T_{\Phi^{[n]}}\mathcal O_n(\Phi^{[n]})
 =q_n\bigl(\pi(\mm\mJ(\Phi))\bigr).
\end{equation}
\end{lemma}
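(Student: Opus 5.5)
By Lemma~\ref{lem:aug-aut-orbit} applied to the finite-dimensional $\ell$-augmented algebra $A=\mF^{[n]}$, the tangent space of the orbit is
\[
 T_{\Phi^{[n]}}\mathcal O_n(\Phi^{[n]})
 =\{\delta\cdot\Phi^{[n]}:\delta\in\mDer^{\mathrm{aug}}_\ell(\mF^{[n]})\}.
\]
So the proof reduces to identifying this space of values with $q_n(\pi(\mm\mJ(\Phi)))$. By Lemma~\ref{lem:cycimage} with $r=1$, we have $\pi(\mm\mJ(\Phi))=\Phi_\#(\mDer^1_\ell(\mF))$. It therefore suffices to compare augmentation-preserving derivations of $\mF$ with those of $\mF^{[n]}$, through their action on traces.

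The first step is to show that every $\xi\in\mDer^1_\ell(\mF)$ descends to $\mF^{[n]}$. Since $\xi(\mm)\subseteq\mm$, the Leibniz rule gives $\xi(\mm^{n+1})\subseteq\mm^{n+1}$. Hence $\xi$ induces a derivation $\bar\xi$ of $\mF^{[n]}$. This derivation vanishes on $\ell$ and preserves $\mm/\mm^{n+1}$, so $\bar\xi\in\mDer^{\mathrm{aug}}_\ell(\mF^{[n]})$. Because $q_n$ is induced by the quotient map, it intertwines the actions on traces, giving $\bar\xi\cdot\Phi^{[n]}=q_n(\xi(\Phi))$. This proves the inclusion $\supseteq$.

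The second step is the converse: every $\delta\in\mDer^{\mathrm{aug}}_\ell(\mF^{[n]})$ lifts to $\mDer^1_\ell(\mF)$. Since $\delta$ is $\ell$-linear on both sides ($\delta(e_i)=0$ and the Leibniz rule), the value $\delta(\bar a)$ lies in $e_{t(a)}(\mm/\mm^{n+1})e_{s(a)}$. Choose a lift $f_a\in e_{t(a)}\mm e_{s(a)}$ for each arrow. By \eqref{eq:der-product}, these values define a unique $\xi\in\mDer^1_\ell(\mF)$ with $\xi(a)=f_a$. Then $\bar\xi$ and $\delta$ are derivations of $\mF^{[n]}$ agreeing on $\ell$ and on arrow images, which generate $\mF^{[n]}$ as an algebra, so $\bar\xi=\delta$. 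Consequently $\delta\cdot\Phi^{[n]}=q_n(\xi(\Phi))\in q_n(\pi(\mm\mJ(\Phi)))$, proving $\subseteq$.

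The main point needing care is that the tangent-space formula of Lemma~\ref{lem:aug-aut-orbit} uses the action of $\delta$ on $\operatorname{Tr}(\mF^{[n]})$ through the identification of Lemma~\ref{lem:aug-aut-group}. One must check that the differential of the orbit map really is $\delta\mapsto\delta\cdot x$ with this trace action, and that $\operatorname{Tr}(\mF^{[n]})$ is the target of $q_n$. The first holds because $(\id+\epsilon\delta)\cdot\pi(u)=\pi(u)+\epsilon\,\pi(\delta u)$. The second holds because $\operatorname{Tr}(\mF^{[n]})$ is exactly $\operatorname{Tr}(\mF)/\pi(\mm^{n+1})$: commutators of $\mF^{[n]}$ lift to commutators of $\mF$. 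With these checks in place, the two inclusions give \eqref{eq:jet-orbit-tangent}.
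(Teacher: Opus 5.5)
Your proposal is correct and follows the paper's own route. The paper likewise combines Lemma~\ref{lem:aug-aut-orbit}, the surjectivity of the canonical map $\mDer^1_\ell(\mF)\to\mDer^{\mathrm{aug}}_\ell(\mF^{[n]})$, and Lemma~\ref{lem:cycimage} with $r=1$. The only difference is that you write out the descent and lifting argument for that surjectivity, which the paper simply asserts.
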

\begin{proof}
We have
\[
\begin{aligned}
T_{\Phi^{[n]}}\mathcal O_n(\Phi^{[n]})
&=\{\delta\cdot\Phi^{[n]}:\delta\in\mDer_\ell^{\mathrm{aug}}(\mF^{[n]})\}\\
&=\{(\delta\Phi)^{[n]}:\delta\in\mDer_\ell^1(\mF)\}\\
&=q_n\bigl(\pi(\mm\mJ(\Phi))\bigr).
\end{aligned}
\]
The first and third equalities follow from
Lemma~\ref{lem:aug-aut-orbit} and Lemma~\ref{lem:cycimage}, respectively.
The second follows from the surjectivity of the canonical map
$\mDer_\ell^1(\mF)\to\mDer_\ell^{\mathrm{aug}}(\mF^{[n]})$.
\end{proof}

\begin{proof}[Proof of Proposition~\ref{prop:fd-necessary}]
For $n\geq N$, put $x_n=\Phi^{[n]}$ and
$E_n=q_n(\pi(\mm^{N+1}))$.
Every $\Bbbk$-rational point of the coset subscheme
$x_n+\mathbb A(E_n)\subseteq\mathbb A(\operatorname{Tr}(\mF^{[n]}))$
is the $n$-jet
of a potential with the same $N$-jet as $\Phi$.
The $N$-determinacy assumption thus gives an inclusion of rational points
\[
 (x_n+\mathbb A(E_n))(\Bbbk)
 \subseteq\mG_n(\Bbbk)\cdot x_n.
\]
Then Lemma~\ref{lem:affine-orbit} and Lemma~\ref{lem:jet-orbit} yield
\begin{equation}\label{eq:jet-tangent-inclusion}
 E_n\subseteq T_{x_n}\mathcal O_n(x_n)
   =q_n\bigl(\pi(\mm\mJ(\Phi))\bigr).
\end{equation}
Thus, for every $n\geq N$,
\[
 \pi(\mm^{N+1})\subseteq
 \pi(\mm\mJ(\Phi))+\pi(\mm^{n+1}).
\]
Since $\pi(\mm\mJ(\Phi))$ is closed in
$\operatorname{Tr}(\mF)$ by Lemma~\ref{lem:cycimage}, it follows that
\[
 \pi(\mm^{N+1})\subseteq
 \bigcap_{n\geq N}\bigl(\pi(\mm\mJ(\Phi))+\pi(\mm^{n+1})\bigr)
 =\pi(\mm\mJ(\Phi)),
\]
which proves the assertion.
\end{proof}

We next give a sufficient condition for finite determinacy.
To establish $N$-determinacy, we seek to eliminate perturbations in
$\pi(\mm^{N+1})$ by right equivalences. The following proposition
gives such a condition and controls the depth of the correcting
automorphism.

\begin{prop}\label{thm:one-inclusion}
Let $\Phi\in\pi(\mm^2)$ and $1\leq r\leq N$.
If
\begin{equation}\label{eq:one-inclusion}
 \pi(\mm^{N+1})\subseteq\pi(\mm^{r+1}\mJ(\Phi)),
\end{equation}
then for every $\Psi$ with $\Psi^{[N]}=\Phi^{[N]}$ there is an
$\alpha\in\mAut_\ell^r(\mF)$ such that $\alpha(\Psi)=\Phi$.
\end{prop}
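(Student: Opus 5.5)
The strategy is to remove the perturbation $\Psi-\Phi$ one order at a time, always solving the linearized equation at the fixed potential $\Phi$, and to use exponentials of derivations so that no quadratic error terms appear. Lemma~\ref{lem:cycimage} rewrites the hypothesis \eqref{eq:one-inclusion} as
\[
 \pi(\mm^{N+1})\subseteq\Phi_\#\bigl(\mDer_\ell^{r+1}(\mF)\bigr).
\]
Put $D=\mDer_\ell^{r+1}(\mF)\cap\Phi_\#^{-1}(\pi(\mm^{N+1}))$. It is a closed subspace of a PC space, hence PC by Proposition~\ref{prop:pc-subquot}. The restriction $\Phi_\#:D\to\pi(\mm^{N+1})$ is continuous and surjective, so it is open by Proposition~\ref{prop:pc}(2). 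Hence for every $s\geq r+1$ the image $\Phi_\#(D\cap\mDer_\ell^{s}(\mF))$ is open in $\pi(\mm^{N+1})$, and so it contains $\pi(\mm^{t(s)})$ for some $t(s)$.

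For $k\geq N$, let $s_k$ be the largest $s\geq r+1$ with $t(s)\leq k+1$, capped at $k$ if necessary. We may take $t$ nondecreasing, so $s_k\geq r+1$ and $s_k\to\infty$. This gives the quantitative statement:
\[
 \text{every }\eta\in\pi(\mm^{k+1})\text{ equals }\xi(\Phi)\text{ for some }\xi\in\mDer_\ell^{s_k}(\mF),\qquad s_k\geq r+1 .
\]

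The single elimination step is as follows. Suppose $\Theta=\Phi+\eta$ with $\eta\in\pi(\mm^{k+1})$, and choose $\xi\in\mDer_\ell^{s_k}(\mF)$ with $\xi(\Phi)=\eta$. Since $s_k\geq 2$, $\xi$ raises $\mm$-adic order by at least $s_k-1\geq r\geq1$. Therefore $\exp(-\xi)=\sum_j(-\xi)^j/j!$ converges (characteristic zero). It is an $\ell$-unital algebra automorphism, because $\xi$ is a derivation, with inverse $\exp(\xi)$, and it lies in $\mAut_\ell^{s_k-1}(\mF)\subseteq\mAut_\ell^{r}(\mF)$. Its action on the trace space is again $\exp(-\xi)$, and
\[
 \exp(-\xi)(\Phi+\eta)=\Phi-\xi\Phi+\sum_{j\geq2}\tfrac{(-\xi)^j}{j!}\Phi+\eta-\xi\eta+\cdots
 =\Phi+\sum_{j\geq1}\tfrac{(-1)^j}{j!}\,\xi^{j-1}\Bigl(\xi\eta-\tfrac{1}{j+1}\,\xi\eta\Bigr)+\cdots .
\]
The key point is that after substituting $\xi\Phi=\eta$ every surviving term contains $\xi$ applied to $\eta$ at least once. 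So the new error lies in $\pi(\mm^{k+1+(s_k-1)})\subseteq\pi(\mm^{k+1+r})\subseteq\pi(\mm^{k+2})$. Organizing this cancellation cleanly, i.e.\ checking that $\exp(-\xi)(\Phi+\xi\Phi)-\Phi$ lies in the closure of $\xi$-images of $\xi\Phi$, is the computation to verify. I expect it to follow from the identity $\exp(-\xi)(1+\xi)=1-\sum_{j\geq2}\frac{(j-1)(-\xi)^j}{j!}$ applied to $\Phi$, where each term $\xi^j\Phi=\xi^{j-1}\eta$ with $j\geq2$ has order at least $k+1+(s_k-1)$.

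For the iteration, start with $\Theta_N=\Psi$, which has error in $\pi(\mm^{N+1})$. Produce $\alpha_k=\exp(-\xi_k)\in\mAut_\ell^{s_k-1}(\mF)$ with $\Theta_{k'}=\alpha_k(\Theta_k)$ having strictly larger error order $k'\geq k+1$. The depths $s_k-1$ are all $\geq r$ and tend to infinity. Lemma~\ref{lem:products} then gives convergence of $\alpha_{k}\cdots\alpha_{N}$ to some $\alpha\in\mAut_\ell^{r}(\mF)$. By continuity of the trace action, $\alpha(\Psi)-\Phi\in\bigcap_n\pi(\mm^{n})=0$, using that $\operatorname{Tr}(\mF)$ is Hausdorff.

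The main obstacle is getting depths that tend to infinity. The bare inclusion only gives $\xi\in\mDer_\ell^{r+1}$, and that is why the open-mapping argument in the first step, rather than the inclusion alone, is needed.
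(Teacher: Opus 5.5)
Your proposal is correct and follows essentially the same route as the paper. You use the open mapping theorem for $\Phi_\#$ between PC spaces to obtain depths $s_k\to\infty$; you apply it to $D\to\pi(\mm^{N+1})$ rather than to $\mDer_\ell^{r+1}(\mF)\to\pi(\mm^{r+1}\mJ(\Phi))$, which is only a cosmetic difference. You then apply corrections $\exp(-\xi)$ with $\xi(\Phi)$ equal to the whole current error, and get convergence from Lemma~\ref{lem:products}. The identity $\exp(-x)(1+x)=1-\sum_{j\geq2}(j-1)(-x)^j/j!$ that you flagged is correct and gives exactly the remainder $\sum_{s\geq1}\frac{(-1)^s s}{(s+1)!}\xi^s(\eta)$ of Lemma~\ref{lem:correction}, so that step goes through. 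Note that $s_N\geq r+1$ relies on being able to take $t(r+1)=N+1$, which holds because $\Phi_\#(D)=\pi(\mm^{N+1})$.
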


\begin{lemma}\label{lem:threshold}
Let $\Phi\in\pi(\mm^2)$ and $r\geq0$. Suppose that
$\pi(\mm^{r+1}\mJ(\Phi))$ is open in $\pi(\mm^2)$.
There is a strictly increasing sequence of integers
$n_p\geq2$, $p\geq r+1$, satisfying
\begin{equation}\label{eq:threshold}
 \pi(\mm^{n_p})
 \subseteq \Phi_\#(\mDer_\ell^p(\mF))
 =\pi(\mm^p\mJ(\Phi)).
\end{equation}
If $\pi(\mm^{N+1})\subseteq\pi(\mm^{r+1}\mJ(\Phi))$
for $N\geq1$, the sequence may be chosen with $n_{r+1}=N+1$.
\end{lemma}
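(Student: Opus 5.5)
The plan is to realize each $\pi(\mm^p\mJ(\Phi))$, for $p\geq r+1$, as an open subspace of $\pi(\mm^2)$, and then read off the integers $n_p$ from the fact that the subspaces $\pi(\mm^n)$ form a neighbourhood basis of zero. The equality in \eqref{eq:threshold} is just Lemma~\ref{lem:cycimage}, so only the inclusion needs work.

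\emph{Step 1: openness propagates to all $p\geq r+1$.} By Lemma~\ref{lem:cycimage}, each $\pi(\mm^p\mJ(\Phi))=\Phi_\#(\mDer_\ell^p(\mF))$ is closed in $\operatorname{Tr}(\mF)$, hence closed in $\pi(\mm^2)$. For its codimension, use the successive quotients
\[
 \frac{\pi(\mm^{q}\mJ(\Phi))}{\pi(\mm^{q+1}\mJ(\Phi))},\qquad r+1\leq q<p.
\]
Each is an image of $\mDer_\ell^{q}(\mF)/\mDer_\ell^{q+1}(\mF)$, which is finite dimensional by \eqref{eq:der-product}; this is the same argument as in the proof of Proposition~\ref{prop:trace-criteria}. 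Hence $\pi(\mm^p\mJ(\Phi))$ has finite codimension in $\pi(\mm^{r+1}\mJ(\Phi))$. By hypothesis the latter has finite codimension in $\pi(\mm^2)$, since it is open there and $\pi(\mm^2)$ is PC. So $\pi(\mm^p\mJ(\Phi))$ is closed of finite codimension in the PC space $\pi(\mm^2)$, and therefore open by Proposition~\ref{prop:pc-open}. (Here $\pi(\mm^2)$ is closed in $\operatorname{Tr}(\mF)$, hence PC by Proposition~\ref{prop:pc-subquot}, and $\pi(\mm^p\mJ(\Phi))\subseteq\pi(\mm^2)$ when $p\geq1$ because $\Phi\in\pi(\mm^2)$.)

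\emph{Step 2: choose the sequence.} The subspaces $\pi(\mm^n)$, $n\geq2$, form a neighbourhood basis of zero in $\pi(\mm^2)$. So for each $p\geq r+1$ there is some $m_p\geq2$ with $\pi(\mm^{m_p})\subseteq\pi(\mm^p\mJ(\Phi))$. Any larger value of $m_p$ also works, so we can define the sequence recursively:
\[
 n_{r+1}=m_{r+1},\qquad n_{p+1}=\max\{m_{p+1},\,n_p+1\}.
\]
This makes the sequence strictly increasing.

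\emph{Step 3: the refinement.} If $\pi(\mm^{N+1})\subseteq\pi(\mm^{r+1}\mJ(\Phi))$ with $N\geq1$, we may take $m_{r+1}=N+1\geq2$, so that $n_{r+1}=N+1$. The recursion then proceeds unchanged.

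There is no serious obstacle in this argument. The only point needing care is the finite-codimension bookkeeping in Step 1, namely using the finite-dimensional graded quotients of the derivation filtration to transfer openness from level $r+1$ to every higher level $p$.
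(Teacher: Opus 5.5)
Your proof is correct and is essentially the paper's argument. The paper applies the open mapping theorem (Proposition~\ref{prop:pc}(2)) to the surjection $\Phi_\#:\mDer_\ell^{r+1}(\mF)\to\pi(\mm^{r+1}\mJ(\Phi))$ to see that each $\Phi_\#(\mDer_\ell^p(\mF))$ is open, while you unpack that theorem's proof directly (closedness from Lemma~\ref{lem:cycimage}, finite codimension from the finite-dimensional quotients of the derivation filtration, then Proposition~\ref{prop:pc-open}); the recursive choice of the strictly increasing $n_p$ with $n_{r+1}=N+1$ is the same.
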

\begin{proof}
By Lemma~\ref{lem:cycimage}, $\Phi_\#$ maps
$\mDer_\ell^{r+1}(\mF)$ continuously onto
$\pi(\mm^{r+1}\mJ(\Phi))$. Both spaces are PC, so this map is
open by Proposition~\ref{prop:pc}. For $p\geq r+1$,
$\mDer_\ell^p(\mF)$ is open in $\mDer_\ell^{r+1}(\mF)$.
Its image is consequently open in
$\pi(\mm^{r+1}\mJ(\Phi))$, and hence in $\pi(\mm^2)$ by hypothesis.
This image contains a basic neighbourhood $\pi(\mm^{n_p})$.
Choose the thresholds recursively so that they strictly increase.
The assumed inclusion permits the stated initial choice.
\end{proof}

\begin{lemma}\label{lem:correction}
Let $p\geq2$, $\delta\in\mDer^p_\ell(\mF)$, and $\Theta=\delta(\Phi)\in\pi(\mm^q)$. Then
$\exp(-\delta)\in\mAut_\ell^{p-1}(\mF)$ and
\begin{equation}\label{eq:remainder}
 \exp(-\delta)(\Phi+\Theta)-\Phi
 =\sum_{s\geq1}\frac{(-1)^s s}{(s+1)!}\delta^s(\Theta)
 \in\pi(\mm^{q+p-1}).
\end{equation}
\end{lemma}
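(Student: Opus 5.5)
Three things need to be shown: that $\exp(-\delta)$ is a well-defined automorphism of depth at least $p-1$, that the displayed series identity holds, and that the right-hand side has order at least $q+p-1$.

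\emph{Step 1: $\exp(-\delta)$ lies in $\mAut_\ell^{p-1}(\mF)$.} Since $\delta\in\mDer^p_\ell(\mF)$ with $p\geq2$, the Leibniz rule gives $\delta(\mm^n)\subseteq\mm^{n+p-1}$ for all $n\geq1$, the same way automatic continuity was proved above. Hence $\delta^s(\mm)\subseteq\mm^{1+s(p-1)}$, and the series $\exp(-\delta)=\sum_{s\geq0}(-\delta)^s/s!$ converges pointwise in the $\mm$-adic topology, because $\Bbbk$ has characteristic zero. The usual Leibniz formula for $\delta^s(uv)$ shows that $\exp(-\delta)$ is multiplicative, and it fixes $\ell$ because $\delta$ kills $\ell$. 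Its inverse is $\exp(\delta)$. For every arrow $a$ we have $\exp(-\delta)(a)-a\in\mm^{p}$, which is exactly depth $p-1$. Alternatively, invertibility follows from Lemma~\ref{lem:inverse}, since the induced map on $\mm/\mm^2$ is the identity.

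\emph{Step 2: the identity.} The action on $\operatorname{Tr}(\mF)$ is compatible with the series: $\exp(-\delta)\cdot\Psi=\sum_s(-1)^s\delta^s(\Psi)/s!$, because $\delta$ preserves $[\mF,\mF]$ and $\pi$ is continuous. Apply this to $\Phi+\Theta$ and substitute $\delta^s(\Phi)=\delta^{s-1}(\Theta)$ for $s\geq1$. Collecting the coefficient of $\delta^s(\Theta)$ for $s\geq1$ gives
\[
\frac{(-1)^{s+1}}{(s+1)!}+\frac{(-1)^s}{s!}=\frac{(-1)^s s}{(s+1)!}.
\]
The $s=0$ contributions are $-\Theta+\Theta=0$, and the leftover term is $\Phi$. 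This yields \eqref{eq:remainder}. Rearranging these sums is legitimate because the terms tend to zero in the $\mm$-adic topology.

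\emph{Step 3: the order estimate.} By $\delta(\mm^n)\subseteq\mm^{n+p-1}$, we get $\delta^s(\pi(\mm^q))\subseteq\pi(\mm^{q+s(p-1)})\subseteq\pi(\mm^{q+p-1})$ for every $s\geq1$. The subspace $\pi(\mm^{q+p-1})$ is closed, so the convergent sum also lies in it.

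The only mildly delicate point is justifying convergence and term-by-term manipulation, namely that $\pi$ commutes with the infinite sums. This follows from continuity of $\pi$ together with the estimate $\delta^s(\mm)\subseteq\mm^{1+s(p-1)}$, which sends the terms to zero. The rest is a formal computation.
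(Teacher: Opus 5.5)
Your proposal is correct and follows essentially the same route as the paper: the Leibniz estimate $\delta(\mm^d)\subseteq\mm^{d+p-1}$ gives convergence of $\exp(-\delta)$ and its depth $p-1$. Substituting $\delta^s(\Phi)=\delta^{s-1}(\Theta)$ and collecting coefficients then yields the identity, with the order bound coming from $\delta^s(\Theta)\in\pi(\mm^{q+s(p-1)})$; the paper's version (computing $\exp(-\delta)(\Phi)-\Phi$ first and then adding $\exp(-\delta)(\Theta)$) is the same computation, and your remarks on continuity of $\pi$ and closedness of $\pi(\mm^{q+p-1})$ only make explicit what the paper leaves implicit.
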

\begin{proof}
The Leibniz rule gives
\[
 \delta(\mm^d)\subseteq\mm^{d+p-1},\qquad
 \delta(\pi(\mm^d))\subseteq\pi(\mm^{d+p-1})\quad(d\geq1).
\]
Thus the exponential series converges and defines an $\ell$-unital
algebra automorphism with inverse $\exp(\delta)$, by the iterated
Leibniz formula. Since $\exp(-\delta)(a)-a\in\mm^p$ for every
$a\in Q_1$, we have $\exp(-\delta)\in\mAut_\ell^{p-1}(\mF)$.
Using $\delta(\Phi)=\Theta$, we obtain
\[
 \exp(-\delta)(\Phi)-\Phi
 =\sum_{s\geq0}\frac{(-1)^{s+1}}{(s+1)!}\delta^s(\Theta).
\]
Adding $\exp(-\delta)(\Theta)$ gives \eqref{eq:remainder}, since
$\delta^s(\Theta)\in\pi(\mm^{q+s(p-1)})$ for $s\geq1$.
\end{proof}

\begin{proof}[Proof of Proposition~\ref{thm:one-inclusion}]
By \eqref{eq:one-inclusion},
$\pi(\mm^{r+1}\mJ(\Phi))$ is open in $\pi(\mm^2)$.
By Lemma~\ref{lem:threshold}, choose strictly increasing
integers $n_p$ for $p\geq r+1$ satisfying \eqref{eq:threshold},
with $n_{r+1}=N+1$.

Let $\Psi=\Phi+\Theta^{(0)}$ with
$\Theta^{(0)}\in\pi(\mm^{n_{r+1}})$.
For each $i\geq0$ with $\Theta^{(i)}\ne0$, put
\[
 p_i=\max\{p\geq r+1:n_p\leq\operatorname{ord}(\Theta^{(i)})\}.
\]
By \eqref{eq:threshold}, there is a
$\delta_i\in\mDer_\ell^{p_i}(\mF)$ such that
$\delta_i(\Phi)=\Theta^{(i)}$. Set
$\alpha_i=\exp(-\delta_i)$ and
\[
 \Phi+\Theta^{(i+1)}=\alpha_i(\Phi+\Theta^{(i)}).
\]
Lemma~\ref{lem:correction} yields
\[
 \operatorname{ord}(\Theta^{(i+1)})\geq
 \operatorname{ord}(\Theta^{(i)})+p_i-1,
 \qquad \alpha_i\in\mAut_\ell^{p_i-1}(\mF).
\]
If $\Theta^{(i)}=0$ for some $i$, the corresponding finite
product gives the required automorphism. Otherwise,
$p_i\geq r+1\geq2$ implies that
$\operatorname{ord}(\Theta^{(i)})\to\infty$ and hence
$p_i\to\infty$. By Lemma~\ref{lem:products}, the products
$\alpha_i\cdots\alpha_0$ converge to an automorphism
$\alpha\in\mAut_\ell^r(\mF)$, and $\alpha(\Psi)=\Phi$.
\end{proof}

\Needspace{8\baselineskip}
\begin{remark}\label{rem:gls-comparison}
In \cite[Lemma~8.19]{GLS16}, a finite potential $\Phi$ whose cycles have length at most $L$ is shown to be finitely determined if, for some $m\geq L$, every cycle $\xi$ of length greater than $m$ is cyclically equivalent to a sum $\sum_{a\in Q_1}u_a\partial_a\Phi$ satisfying, for all $a\in Q_1$,
\[
\operatorname{ord}(u_a)+\operatorname{ord}(\partial_a\Phi)
\geq|\xi|.
\] This follows from Proposition~\ref{thm:one-inclusion}, since the hypothesis implies $\pi(\mm^{m+1})\subseteq\pi(\mm^{m-L+2}\mJ(\Phi))$; take $N=m$ and $r=m-L+1$.
\end{remark}

\Needspace{6\baselineskip}
\begin{theorem}\label{thm:trace-criterion}
A potential $\Phi\in\pi(\mm^2)$ is finitely determined if and only if
it is trace-finite.
\end{theorem}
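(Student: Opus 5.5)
The proof splits into the two implications. Both directions should reduce to the inclusions already controlled by Proposition~\ref{prop:trace-criteria}, so the remaining work is mostly bookkeeping of indices.

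\emph{Necessity.} Suppose $\Phi$ is finitely determined, say $N$-determined. Since $N$-determinacy implies $N'$-determinacy for every $N'\geq N$, we may assume $N\geq1$. Proposition~\ref{prop:fd-necessary} then gives $\pi(\mm^{N+1})\subseteq\pi(\mm\mJ(\Phi))$. Because $\pi(\mm^{N+1})$ is open in $\pi(\mm^2)$, the subspace $\pi(\mm\mJ(\Phi))$ has finite codimension in $\pi(\mm^2)$. Indeed, $\pi(\mm^2)/\pi(\mm^{N+1})$ is finite dimensional, since it is spanned by cyclic classes of cycles of length between $2$ and $N$. This is condition (3) of Proposition~\ref{prop:trace-criteria}, so $\Phi$ is trace-finite by (3)$\Rightarrow$(1).

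\emph{Sufficiency.} Suppose $\Phi$ is trace-finite. By (1)$\Rightarrow$(4) of Proposition~\ref{prop:trace-criteria}, there is an $N\geq1$ with
\[
 \pi(\mm^{N+1})\subseteq\pi(\mm^2\mJ(\Phi)).
\]
This is hypothesis \eqref{eq:one-inclusion} of Proposition~\ref{thm:one-inclusion} with $r=1$, and that proposition requires $1\leq r\leq N$, which holds. It follows that every $\Psi$ with $\Psi^{[N]}=\Phi^{[N]}$ satisfies $\alpha(\Psi)=\Phi$ for some $\alpha\in\mAut_\ell^1(\mF)\subseteq\mAut_\ell(\mF)$. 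So $\Psi$ is right equivalent to $\Phi$, and $\Phi$ is $N$-determined.

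The substantive work sits in Propositions~\ref{prop:fd-necessary} and~\ref{thm:one-inclusion}, which we take as given, so neither direction poses a real obstacle. The only point to check is that the inclusions match exactly. Necessity produces $\pi(\mm^{N+1})\subseteq\pi(\mm\mJ(\Phi))$, with $\mm$ rather than $\mm^2$, which is why we pass through condition (3). Sufficiency needs the stronger $\mm^{r+1}$ with $r\geq1$, which condition (4) supplies directly. Proposition~\ref{prop:trace-criteria} provides the equivalence of (3) and (4) that bridges this gap.
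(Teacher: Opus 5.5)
Your proposal is correct and follows the paper's proof essentially step for step. Necessity goes through Proposition~\ref{prop:fd-necessary} and condition (3) of Proposition~\ref{prop:trace-criteria}, and sufficiency goes through condition (4) and Proposition~\ref{thm:one-inclusion} with $r=1$; you also spell out the finite codimension of $\pi(\mm^{N+1})$ in $\pi(\mm^2)$, which the paper leaves implicit.
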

\begin{proof}
Suppose first that $\Phi$ is $N$-determined for some $N\geq1$.
By Proposition~\ref{prop:fd-necessary},
$\pi(\mm^{N+1})\subseteq\pi(\mm\mJ(\Phi))$.
Thus $\pi(\mm\mJ(\Phi))$ has finite codimension in
$\pi(\mm^2)$, so Proposition~\ref{prop:trace-criteria}(3)
shows that $\Phi$ is trace-finite.

Conversely, suppose that $\Phi$ is trace-finite.
By Proposition~\ref{prop:trace-criteria}(4), there is $N\geq1$ such that
$\pi(\mm^{N+1})\subseteq\pi(\mm^2\mJ(\Phi))$.
Proposition~\ref{thm:one-inclusion}, with $r=1$, then shows that
$\Phi$ is $N$-determined.
\end{proof}

\begin{corollary}\label{cor:jacobi-fd}
Every Jacobi-finite potential $\Phi\in\pi(\mm^2)$ is finitely determined.
\end{corollary}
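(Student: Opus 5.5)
The plan is to reduce Corollary~\ref{cor:jacobi-fd} to Theorem~\ref{thm:trace-criterion}: it suffices to show that a Jacobi-finite potential $\Phi\in\pi(\mm^2)$ is trace-finite. The trace-finiteness will come from the fact that $\operatorname{Tr}(\Lambda(\Phi))$ is a quotient of $\Lambda(\Phi)$.

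First, the canonical projection $\pi_{\Lambda(\Phi)}:\Lambda(\Phi)\to\operatorname{Tr}(\Lambda(\Phi))=\Lambda(\Phi)/[\Lambda(\Phi),\Lambda(\Phi)]$ is a surjective $\Bbbk$-linear map. Hence
\[
 \dim_\Bbbk\operatorname{Tr}(\Lambda(\Phi))\leq\dim_\Bbbk\Lambda(\Phi)<\infty .
\]
No topological subtlety arises here. The definition of trace-finiteness uses the algebraic quotient by the linear span of commutators, and that span is in any case closed by Corollary~\ref{cor:closed-commutators}, so the dimension count above is directly the one required.

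Second, since $\Phi$ is now trace-finite, Theorem~\ref{thm:trace-criterion} applies and gives that $\Phi$ is finitely determined. For a concrete order one can go through Proposition~\ref{prop:trace-criteria}(4): it provides $N\geq1$ with $\pi(\mm^{N+1})\subseteq\pi(\mm^2\mJ(\Phi))$, and then Proposition~\ref{thm:one-inclusion} with $r=1$ shows $\Phi$ is $N$-determined.

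I expect no real obstacle; the only point to check is that the trace-finite notion used in the theorem is exactly finite dimensionality of this algebraic quotient, which it is by definition.
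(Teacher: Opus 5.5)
Your proposal is correct and is exactly the paper's argument: $\operatorname{Tr}(\Lambda(\Phi))$ is a quotient of the finite-dimensional algebra $\Lambda(\Phi)$, so $\Phi$ is trace-finite, and Theorem~\ref{thm:trace-criterion} then gives finite determinacy. Your optional route through Proposition~\ref{prop:trace-criteria}(4) and Proposition~\ref{thm:one-inclusion} with $r=1$ simply unpacks the sufficiency half of that theorem's proof.
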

\begin{proof}
If $\Lambda(\Phi)$ is finite dimensional, then so is its trace space.
Theorem~\ref{thm:trace-criterion} applies.
\end{proof}

\begin{corollary}\label{cor:scalar-extension}
Let $\mathbb K/\Bbbk$ be a field extension and
$\Phi\in\pi(\mm^2)$. Then $\Phi$ is finitely determined over
$\Bbbk$ if and only if $\Phi_{\mathbb K}$ is finitely
determined over $\mathbb K$.
\end{corollary}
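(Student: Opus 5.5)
The plan is to combine Theorem~\ref{thm:trace-criterion} with Proposition~\ref{prop:trace-base-extension}(2), so that the statement reduces to trace-finiteness and never has to compare right-equivalence orbits over different fields directly. First I note that $\Phi_{\mathbb K}\in\pi_{\mathbb K}(\mm_{\mathbb K}^2)$. This holds because scalar extension of a potential is taken coefficientwise on the topological basis of cyclic classes of oriented cycles. The field $\mathbb K$ again has characteristic zero, so all results of Sections~\ref{sec:infinitesimal} and~\ref{sec:fd} apply verbatim over $\mathbb K$ with the same quiver $Q$.

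The argument then runs through a chain of equivalences. By Theorem~\ref{thm:trace-criterion} over $\Bbbk$, $\Phi$ is finitely determined if and only if $\Phi$ is trace-finite. By Proposition~\ref{prop:trace-base-extension}(2), this holds if and only if $\Phi_{\mathbb K}$ is trace-finite. Finally, Theorem~\ref{thm:trace-criterion} applied over $\mathbb K$ shows this is equivalent to $\Phi_{\mathbb K}$ being finitely determined over $\mathbb K$.

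The only point needing care is that Theorem~\ref{thm:trace-criterion} is stated over the fixed ground field. Its proof used only characteristic zero: the necessity direction needs $\Bbbk$ infinite (for Lemma~\ref{lem:affine-orbit}) and Cartier's theorem, both of which hold for $\mathbb K$. Thus applying it over $\mathbb K$ is legitimate. I expect no real obstacle here.

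If one also wants the determinacy order to be preserved, I would refine the argument using Proposition~\ref{prop:trace-base-extension}(1). Suppose $\Phi$ is $N$-determined. Proposition~\ref{prop:fd-necessary} gives $\pi(\mm^{N+1})\subseteq\pi(\mm\mJ(\Phi))$, and this transfers to $\mathbb K$ with $r=1$. However, Proposition~\ref{thm:one-inclusion} needs the stronger inclusion with $\mm^{2}\mJ(\Phi)$, so the orders may shift. For that reason I would state only the qualitative equivalence, as the corollary does.
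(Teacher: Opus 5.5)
Your proposal is correct and matches the paper's proof exactly: the paper also derives the corollary by chaining Theorem~\ref{thm:trace-criterion} (over $\Bbbk$ and over $\mathbb K$, both of characteristic zero) with Proposition~\ref{prop:trace-base-extension}(2). Your observations that $\Phi_{\mathbb K}\in\pi_{\mathbb K}(\mm_{\mathbb K}^2)$ and that the theorem applies verbatim over $\mathbb K$ are the only implicit checks, and your caution about not claiming preservation of the determinacy order is appropriate.
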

\begin{proof}
The equivalence follows from Theorem~\ref{thm:trace-criterion}
and Proposition~\ref{prop:trace-base-extension}(2).
\end{proof}

\section{Applications to rigid potentials}\label{sec:rigid}
This section applies the finite-determinacy criterion to rigid potentials and establishes their uniqueness up to right equivalence over an algebraically closed field. The key idea in proving uniqueness is to use rigidity to obtain open orbits in finite jet spaces and then pass back to potentials via finite determinacy. We conclude with examples over the real numbers that illustrate the role of the ground field.

\begin{definition}
A potential $\Phi\in\operatorname{Tr}(\mF)$ is \emph{rigid} if $\Phi\in\pi(\mm^2)$ and
$\pi(\mm)=\pi(\mJ(\Phi))$. Rigidity is invariant under right equivalence
by Lemma~\ref{lem:jac-invariance}.
\end{definition}

\begin{theorem}\label{thm:rigid-fd}\label{thm:unique}
\begin{enumerate}
\item Every rigid potential is finitely determined.
\item If $\Bbbk$ is algebraically closed, any two rigid potentials
on $Q$ are right equivalent.
\end{enumerate}
\end{theorem}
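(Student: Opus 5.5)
Part (1) is immediate from the trace criterion. If $\Phi$ is rigid, then $\pi(\mm)/\pi(\mJ(\Phi))=0$. By Lemma~\ref{lem:jacobi-trace}, $\operatorname{Tr}(\Lambda(\Phi))\cong\ell$ is finite dimensional, so $\Phi$ is trace-finite, and Theorem~\ref{thm:trace-criterion} shows that $\Phi$ is finitely determined.

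For (2), the plan is to show that rigidity forces the orbit of $\Phi^{[n]}$ to be open in the jet space, and then to compare two such open orbits.

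\emph{Step 1: rigidity gives $\pi(\mm^2)=\pi(\mm\mJ(\Phi))$.} This is the step I expect to be the main obstacle. Let $x\in\pi(\mm^2)$. Since $x\in\pi(\mm)=\pi(\mJ(\Phi))=\Phi_\#(\mDer_\ell(\mF))$ by Lemma~\ref{lem:cycimage}, we can write $x=\sum_a\pi(u_a\partial_a\Phi)$. Split each $u_a$ into its constant part and its part in $\mm$. The constant part can be nonzero only for loop arrows $a$, where it equals $c_a e_{s(a)}$. Hence
\[
x-\sum_{a\ \mathrm{loop}}c_a\,\pi(\partial_a\Phi)\in\pi(\mm\mJ(\Phi)).
\]
It therefore suffices to prove that the linear map
\[
c\longmapsto\Bigl(\sum_a c_a\pi(\partial_a\Phi)\Bigr)_{1},
\]
taking the length-one component in the span of loop classes, is injective. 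Once injectivity holds, $x\in\pi(\mm^2)$ forces the length-one component to vanish, so all $c_a=0$ and $x\in\pi(\mm\mJ(\Phi))$.

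For injectivity, apply rigidity to each loop $b$. We have $\pi(b)\in\pi(\mJ(\Phi))$, so $\pi(b)=\sum_a\pi(u_a\partial_a\Phi)$ for suitable $u_a$. Since $\operatorname{ord}(\partial_a\Phi)\geq1$, only the constant parts of the $u_a$ contribute in length one. So this map is surjective onto the loop span. Its source and target both have dimension equal to the number of loops, so it is bijective.

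\emph{Step 2: open orbits.} Fix $n\geq1$ and let $V_n=q_n(\pi(\mm^2))$, an affine space stable under $\mG_n$ because $\mG_n$ preserves the augmentation. By Lemma~\ref{lem:jet-orbit} and Step~1, $T_{\Phi^{[n]}}\mathcal O_n(\Phi^{[n]})=V_n$. By Lemma~\ref{lem:aug-aut-orbit}, the orbit is smooth and locally closed, and it has full dimension in the irreducible space $\mathbb A(V_n)$. Hence it is a dense open subscheme. The same holds for any other rigid $\Psi$.

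\emph{Step 3: comparing two rigid potentials.} Over an algebraically closed $\Bbbk$, two dense open subsets of $\mathbb A(V_n)$ meet in a rational point. Orbit rational points are $\mG_n(\Bbbk)$-orbits, so $\Psi^{[n]}=g\cdot\Phi^{[n]}$ for some $g\in\mG_n(\Bbbk)$. Choose $n=N$ with both $\Phi$ and $\Psi$ $N$-determined, using part (1).

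Lift $g$ to an $\ell$-unital endomorphism $\alpha$ of $\mF$ by choosing lifts in $e_{t(a)}\mm e_{s(a)}$ of the images $g(a)$. The induced map of $\alpha$ on $\mm/\mm^2$ agrees with that of $g$, which is invertible, so $\alpha\in\mAut_\ell(\mF)$ by Lemma~\ref{lem:inverse}. Then $\alpha(\Phi)^{[N]}=\Psi^{[N]}$. Since $\Psi$ is $N$-determined, $\alpha(\Phi)$ is right equivalent to $\Psi$, and therefore so is $\Phi$.
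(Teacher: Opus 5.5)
Your proposal is correct. Part~(1) and your Steps~2--3 follow the paper's proof essentially verbatim: full-dimensional smooth orbits in $\mathbb A(q_N(\pi(\mm^2)))$, a common $\Bbbk$-point over algebraically closed $\Bbbk$, lifting the group element via Lemma~\ref{lem:inverse}, and finishing with $N$-determinacy. You invoke determinacy of $\Psi$ rather than of $\Phi$, which is immaterial.

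The real difference is Step~1. The paper derives $\pi(\mm^2)=\pi(\mm\mJ(\Phi))$ from Lemma~\ref{prop:rigid-tangent}. That proof first brings $\Phi$ to the normal form of Lemma~\ref{lem:quadratic-splitting}: it diagonalizes the loop part of the quadratic term, pairs the nonloop $2$-cycles, and clears higher terms involving these arrows by a convergent product of unitriangular automorphisms. It then observes that rigidity forces every loop to be some $x_j$, and $\partial_{x_j}\Phi=2\lambda_jx_j$ makes the constant coefficients vanish.

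You argue instead by a dimension count. Rigidity makes the map $c\mapsto\bigl(\sum_a c_a\pi(\partial_a\Phi)\bigr)_1$ surjective onto the span of the loop classes. These classes are linearly independent and as numerous as the loops, so the map is injective and the constant coefficients vanish directly. Both arguments detect the same fact: the quadratic form defined by the loop terms of the quadratic part of $\Phi$ is nondegenerate. Yours needs no change of coordinates and no appeal to Lemma~\ref{lem:products}.

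The paper's detour buys the converse implication in Lemma~\ref{prop:rigid-tangent} and a normal form that is reused in Example~\ref{ex:real-signatures}. For Theorem~\ref{thm:unique}(2), only the direction you prove is needed.
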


\begin{proof}[Proof of Theorem~\ref{thm:rigid-fd}\textup{(1)}]
If $\Phi$ is rigid, Lemma~\ref{lem:jacobi-trace} gives
$\operatorname{Tr}(\Lambda(\Phi))\cong\ell$, so $\Phi$ is trace-finite.
Theorem~\ref{thm:trace-criterion} therefore gives finite determinacy.
\end{proof}

To prove Theorem~\ref{thm:rigid-fd}(2), we first establish
a splitting lemma. The Splitting Theorem of
\cite[Theorem~4.6]{DWZ08} decomposes a potential on a loop-free
quiver into a trivial quadratic part and a reduced part, both unique
up to right equivalence. The following lemma extends the existence
of such a decomposition to arbitrary quivers, allowing quadratic terms in loops.

\begin{lemma}\label{lem:quadratic-splitting}
Let $\Phi\in\pi(\mm^2)$.
After a linear change of arrows and a unitriangular automorphism
of $\mF$, the potential $\Phi$ is right equivalent to
a potential of the following form:
\[
 \sum_{j=1}^{M}\pi(\lambda_jx_j^2)
 +\sum_{h=1}^{N}\pi(a_hb_h)+\Phi_{\mathrm{red}},
\]
where the $x_j$ are distinct loops, $\lambda_j\in\Bbbk^\times$,
and $a_1,b_1,\ldots,a_N,b_N$ are pairwise distinct nonloop arrows,
with each $a_hb_h$ a cyclic $2$-path, and $\Phi_{\mathrm{red}}\in\pi(\mm^3)$ contains no arrow
in $Y=\{x_j,a_h,b_h\}$.
\end{lemma}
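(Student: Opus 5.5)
The proof has two stages: first normalize the quadratic part of $\Phi$ by a linear change of arrows, then remove the dependence of the higher-order terms on the arrows of $Y$ by a convergent sequence of unitriangular substitutions, in the spirit of the proof of Proposition~\ref{thm:one-inclusion}.

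\emph{Normalizing the quadratic part.} Write $\Phi=\Phi_2+\Phi_{\geq3}$ with $\Phi_2\in\pi(\mm^2)$ homogeneous of degree two. Cyclic $2$-paths are of two kinds, and $\Phi_2$ splits accordingly:
\[
 \Phi_2=\sum_{i\in Q_0}q_i+\sum_{\{i,j\},\,i\ne j}\beta_{ij}.
\]
Here $q_i$ is a quadratic form in the loops at $i$, since $\pi(xy)=\pi(yx)$ makes the cyclic classes symmetric. The term $\beta_{ij}$ is a bilinear pairing between the arrows $i\to j$ and the arrows $j\to i$. Linear $\ell$-unital automorphisms act by $\prod_{i,j}\operatorname{GL}(e_j\Bbbk Q_1e_i)$. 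A change of basis of the loops at $i$ diagonalizes $q_i$, which uses $\operatorname{char}\Bbbk\ne2$. Independent changes of basis of the two arrow spaces $i\to j$ and $j\to i$ bring $\beta_{ij}$ to the normal form $\sum_h\pi(a_hb_h)$. The normal form of $\beta_{ij}$ is simply the rank normal form of a matrix. After relabelling we obtain $\Phi_2=\sum_j\pi(\lambda_jx_j^2)+\sum_h\pi(a_hb_h)$ with the required distinctness.

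\emph{Eliminating $Y$ from the higher-order terms.} Call a cycle \emph{bad} if it contains an arrow of $Y$. The plan is to show inductively that for every $n\geq3$ there is $\alpha\in\mAut_\ell^{n-2}(\mF)$ after which the degree $<n+1$ part of $\Phi_{\geq3}$ has no bad cycles. For the inductive step, take a bad cyclic term $\pi(u)$ of degree $n$. By cyclic rotation write $u=yw$ with $y\in Y$ and $|w|=n-1\geq2$.

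If $y=x_j$, substitute $x_j\mapsto x_j-\tfrac{1}{2\lambda_j}w$. This kills the term, because $\partial_{x_j}\pi(\lambda_jx_j^2)=2\lambda_jx_j$. If $y=a_h$, substitute $b_h\mapsto b_h-w$. This kills the term, because $\partial_{b_h}\pi(a_hb_h)=a_h$. The case $y=b_h$ is symmetric. Each such substitution lies in $\mAut_\ell^{n-2}(\mF)$ with $n-2\geq1$. It changes $\Phi$ only in degrees $\geq n+1$, apart from the intended cancellation, because a cross term between the $w$-correction and $\Phi_{\geq3}$ has degree at least $(n-1)+3-1=n+1$, and the quadratic term of the correction has degree $2(n-1)\geq n+1$.

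We perform these substitutions for all bad terms of degree $n$. Their effects commute modulo $\pi(\mm^{n+1})$, so they can be done simultaneously by a single automorphism of depth $n-2$. This leaves $\Phi_2$ unchanged and leaves the new degree-$n$ part free of bad cycles. Lemma~\ref{lem:products}, applied with $k_n=n-2\to\infty$, then produces a limit automorphism. In the limit, the quadratic part is $\Phi_2$ and the part of degree $\geq3$ contains no arrow of $Y$; call this part $\Phi_{\mathrm{red}}$.

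\emph{Main obstacle.} The delicate step is the bookkeeping in degree $n$. A single cycle may contain several arrows of $Y$, and removing it through one of them must not create new bad terms of degree $n$. The estimate above shows that all side effects land in degree $\geq n+1$, so there is no creation in degree $n$. Each bad cycle should be assigned to one chosen occurrence of an arrow of $Y$, and its coefficient then subtracted only once; it must not be removed separately for each occurrence. The other check is that the action of the limit automorphism on $\operatorname{Tr}(\mF)$ is compatible with the degreewise limit. This follows from the congruence statement of Lemma~\ref{lem:products} together with the continuity of $\pi$.
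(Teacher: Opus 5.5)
Your proposal is correct and follows essentially the same route as the paper. It uses the same linear normalization of the quadratic part (diagonalizing the loop quadratic forms via $\operatorname{char}\Bbbk\neq2$, and the rank normal form for the pairings between opposite arrow spaces), then the same degree-by-degree substitutions $x_j\mapsto x_j-\frac{1}{2\lambda_j}v_{x_j}$, $b_h\mapsto b_h-v_{a_h}$, $a_h\mapsto a_h-v_{b_h}$ of depth $d-2$, assembled through Lemma~\ref{lem:products}.
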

\begin{proof}
Let $\Phi_2$ be the quadratic part of $\Phi$. The terms involving
loops define a quadratic form on the space of loops at each vertex.
Since $2$ is invertible in $\Bbbk$, a linear change of loop arrows
puts each such form into diagonal form, with nonzero terms
$\sum_j\pi(\lambda_jx_j^2)$, where $\lambda_j\in\Bbbk^\times$.
For each unordered pair of distinct vertices, the remaining quadratic
terms give a bilinear pairing between the spaces of arrows in opposite
directions. Independent linear changes in these spaces allow us to write
these terms as $\sum_{h=1}^N\pi(a_hb_h)$, where the arrows
$a_1,b_1,\ldots,a_N,b_N$ are pairwise distinct and each $a_hb_h$ is a
cyclic $2$-path. Thus, after a linear change of arrows,
\[
 \Phi_2=\sum_j\pi(\lambda_jx_j^2)+\sum_h\pi(a_hb_h).
\]

Assume inductively that the terms of path lengths $3,\ldots,d-1$,
where $d\ge3$, involve no arrows in $Y$. Modulo $\pi(\mm^{d+1})$,
the path-length-$d$ terms involving $Y$ can be written cyclically
as $\sum_{y\in Y}\pi(yv_y)$, where each $v_y$ is homogeneous
of path length $d-1$.
Define $\alpha_d\in\mAut_\ell^{d-2}(\mF)$ to be the unique
automorphism determined by the arrow substitutions
\[
 x_j\longmapsto x_j-\frac{1}{2\lambda_j}v_{x_j},\qquad
 a_h\longmapsto a_h-v_{b_h},\qquad
 b_h\longmapsto b_h-v_{a_h},
\]
with all arrows outside $Y$ fixed. Then
\[
 \alpha_d(\Phi)\equiv
 \Phi-\sum_{y\in Y}\pi(yv_y)
 \pmod{\pi(\mm^{d+1})}.
\]
Thus the terms of path lengths $3,\ldots,d$ in $\alpha_d(\Phi)$
involve no arrows in $Y$. Applying this construction successively
for $d=3,4,\ldots$, Lemma~\ref{lem:products} yields an
automorphism $\alpha\in\mAut_\ell^1(\mF)$ such that
$\alpha(\Phi)=\Phi_2+\Phi_{\mathrm{red}}$, where
$\Phi_{\mathrm{red}}\in\pi(\mm^3)$ involves no arrow in $Y$.
\end{proof}

\begin{lemma}\label{prop:rigid-tangent}
A potential $\Phi\in\pi(\mm^2)$ is rigid if and only if
$\pi(\mm^2)=\pi(\mm\mJ(\Phi))$.
\end{lemma}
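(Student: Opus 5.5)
We compare the two subspaces $\pi(\mm)\supseteq\pi(\mJ(\Phi))$ and $\pi(\mm^2)\supseteq\pi(\mm\mJ(\Phi))$ through the cyclic tangent map $\Phi_\#$. By Lemma~\ref{lem:cycimage},
\[
 \pi(\mJ(\Phi))=\Phi_\#(\mDer_\ell(\mF)),\qquad
 \pi(\mm\mJ(\Phi))=\Phi_\#(\mDer_\ell^1(\mF)).
\]
Using \eqref{eq:der-product}, a derivation decomposes as $\xi=\xi_0+\xi_1$ with $\xi_1\in\mDer_\ell^1(\mF)$. Here $\xi_0$ sends each loop $x$ to a constant multiple $c_x e_{t(x)}$ of its vertex idempotent and kills all other arrows.

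The key observation is a grading comparison. For such a $\xi_0$ we have $\xi_0(\Phi)=\pi\bigl(\sum_x c_x\partial_x\Phi\bigr)$, where $x$ runs over the loops. Since $\Phi\in\pi(\mm^2)$, each $\partial_x\Phi$ lies in $\mm$, and its degree-one component is determined by the quadratic part of $\Phi$.

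\emph{($\Leftarrow$).} Assume $\pi(\mm^2)=\pi(\mm\mJ(\Phi))$. We must show $\pi(\mm)\subseteq\pi(\mJ(\Phi))$, and since $\pi(\mm^2)\subseteq\pi(\mJ(\Phi))$ already, it suffices to hit the classes of loops modulo $\pi(\mm^2)$. Here $\pi(\mm)/\pi(\mm^2)$ has as basis the classes of loops, since a non-loop arrow is not a cycle and has zero trace class. I would apply Lemma~\ref{lem:quadratic-splitting}; rigidity and the hypothesis are both invariant under right equivalence, since $\alpha$ preserves $\mm$, $\mJ$ transforms by Lemma~\ref{lem:jac-invariance}, and $\alpha$ acts bijectively on $\operatorname{Tr}(\mF)$. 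This puts $\Phi$ in split form. A loop $x_j$ with $\lambda_j\ne0$ gives $\partial_{x_j}\Phi=2\lambda_jx_j$, so $\pi(x_j)\in\pi(\mJ(\Phi))$. The difficulty is the loops $z$ not appearing in the quadratic part. For such $z$ we have $\partial_z\Phi\in\mm^2$, so nothing in $\pi(\mJ(\Phi))$ has a nonzero $\pi(z)$-component modulo $\pi(\mm^2)$: we have $\pi(\mJ(\Phi))=\Phi_\#(\xi_0)+\pi(\mm\mJ(\Phi))$ and $\Phi_\#(\xi_0)$ has degree-one part only in the span of the $x_j$. So the hypothesis alone does not exclude such $z$.

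\emph{($\Rightarrow$).} Assume $\pi(\mm)=\pi(\mJ(\Phi))$. Work in $\pi(\mm)/\pi(\mm\mJ(\Phi))$, where the image of $\Phi_\#(\xi_0)$ together with $\pi(\mm^2)$ spans everything. Any $w\in\pi(\mm^2)$ can be written $w=\Phi_\#(\xi_0)+\Phi_\#(\xi_1)$. Comparing degree-one parts, $\sum_jc_{x_j}2\lambda_j\pi(x_j)=0$, so $c_{x_j}=0$ for all quadratic loops. For the other loops $z$, $c_z\partial_z\Phi\in\mm^2$. I would then show that $\pi(\partial_z\Phi)\in\pi(\mm\mJ(\Phi))$, which would give $w\in\pi(\mm\mJ(\Phi))$.

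Reconsidering, this step is not routine, and I cannot settle it here. Note that $\pi(z\,\partial_z\Phi)\in\pi(\mm\mJ)$, but $\pi(\partial_z\Phi)$ itself need not be. There is also the worry that rigidity forces no such $z$ to exist. Indeed, for ($\Rightarrow$), rigidity requires $\pi(z)\in\pi(\mJ(\Phi))$, but every element of $\pi(\mJ(\Phi))$ has zero $\pi(z)$-coefficient in degree one when $z$ is quadratic-free. So rigid potentials have every loop quadratic, and the $\Rightarrow$ direction closes. For ($\Leftarrow$), I would then try to show that a quadratic-free loop $z$ contradicts the hypothesis. The first attempt would be a degree-two check: $\pi(z^2)\in\pi(\mm^2)$ must lie in $\pi(\mm\mJ(\Phi))$, whose degree-two part is spanned by the classes $\pi(y\,(\partial_a\Phi)_1)$ with $y$ an arrow. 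These involve only the arrows in $Y$, so they cannot produce $\pi(z^2)$, which is the contradiction. This degree-two argument is the main obstacle, and I would verify it carefully.
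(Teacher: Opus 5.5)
Once the detours are removed, your argument is correct and is essentially the paper's proof. You reduce by Lemma~\ref{lem:quadratic-splitting}, and then use that a loop $c$ with $\pi(c)\in\pi(\mJ(\Phi))$ (your degree-one check) or with $\pi(c^2)\in\pi(\mm\mJ(\Phi))$ (your degree-two check) must be one of the $x_j$; both directions then close exactly as you describe. Your interim remark that the hypothesis ``does not exclude'' quadratic-free loops is wrong, as your own degree-two check shows. That check is sound as stated: each class $\pi(y\,(\partial_a\Phi)_1)$ is the class of a $2$-cycle through an arrow of $Y$, so its span cannot contain $\pi(z^2)$.
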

\begin{proof}
Since automorphisms preserve $\mm$, both conditions are invariant
under right equivalence by Lemma~\ref{lem:jac-invariance}.
By Lemma~\ref{lem:quadratic-splitting}, we may therefore assume
$\Phi$ is of the following form:
\[
 \Phi=\sum_{j=1}^{M}\pi(\lambda_jx_j^2)
       +\sum_{h=1}^{N}\pi(a_hb_h)+\Phi_0,
\]
where the arrows $x_j,a_h,b_h$ and the coefficients $\lambda_j$ are as in
Lemma~\ref{lem:quadratic-splitting}, and $\Phi_0\in\pi(\mm^3)$
contains no arrow in $Y=\{x_1,\ldots,x_M,a_1,b_1,\ldots,a_N,b_N\}$.
Its Jacobian ideal is
\[
 \mJ(\Phi)=\overline{(Y)}+\mJ(\Phi_0),\qquad \mJ(\Phi_0)\subseteq\mm^2.
\]
Thus, for any loop $c\in Q_1$, if $\pi(c)\in\pi(\mJ(\Phi))$ or
$\pi(c^2)\in\pi(\mm\mJ(\Phi))$, then $c=x_j$ for some $j$.

Suppose first that $\pi(\mm^2)\subseteq\pi(\mm\mJ(\Phi))$.
The preceding observation forces every loop to be one of the $x_j$,
which lie in $\mJ(\Phi)$. Every nonloop arrow has zero cyclic class.
All cyclic classes of length at least two also lie in
$\pi(\mJ(\Phi))$. Consequently $\pi(\mm)\subseteq\pi(\mJ(\Phi))$, so $\Phi$ is rigid.

Conversely, suppose that $\Phi$ is rigid. Since
$\pi(\mm)\subseteq\pi(\mJ(\Phi))$, the preceding observation
forces every loop to be one of the $x_j$. For each $\Theta\in\pi(\mm^2)$,
one has an expression
\[
 \Theta=\pi\left(\sum_{c\in Q_1} u_c\partial_c\Phi\right),
 \qquad u_c\in e_{t(c)}\mF e_{s(c)}.
\]
For nonloops $c$, the coefficient $u_c$ belongs to $\mm$. For a loop
$x_j$, write $u_{x_j}^{(0)}$ for the constant part of $u_{x_j}$. Since
$\partial_{x_j}\Phi=2\lambda_jx_j$, the path-length-one part of this equality is
\[
 \pi\left(\sum_{j=1}^{M} u_{x_j}^{(0)}2\lambda_jx_j\right)=0.
\]
The loop classes are linearly independent, so the constants $u_{x_j}^{(0)}$ vanish.
Thus every coefficient belongs to $\mm$ and
$\Theta\in\pi(\mm\mJ(\Phi))$. Hence
$\pi(\mm^2)\subseteq\pi(\mm\mJ(\Phi))$.
\end{proof}

\begin{proof}[Proof of Theorem~\ref{thm:unique}\textup{(2)}]
Let $\Phi$ and $\Psi$ be rigid potentials. By
Theorem~\ref{thm:rigid-fd}(1), choose $N\geq1$ such that $\Phi$ is
$N$-determined. Lemma~\ref{lem:jet-orbit} and
Lemma~\ref{prop:rigid-tangent} give
\[
 T_{\Phi^{[N]}}\mathcal O_N(\Phi^{[N]})
 =q_N(\pi(\mm\mJ(\Phi)))
 =q_N(\pi(\mm^2)).
\]
Since the $\mG_N$-action preserves the square of the
augmentation ideal, the orbit $\mathcal O_N(\Phi^{[N]})$ is contained in
$\mathbb A(q_N(\pi(\mm^2)))$. It is smooth and locally
closed, and its tangent dimension equals the dimension of
this irreducible affine space. The orbit $\mathcal O_N(\Phi^{[N]})$ is therefore a
nonempty open subscheme of $\mathbb A(q_N(\pi(\mm^2)))$. The same argument
applies to $\mathcal O_N(\Psi^{[N]})$. These two open
orbit subschemes must intersect. Over the algebraically
closed field $\Bbbk$ this means that the two $\Bbbk$-rational points $\Phi^{[N]}$ and $\Psi^{[N]}$ belong
to the same $\mG_N(\Bbbk)$-orbit; here we use the distinction
between an orbit subscheme and its rational points established
in Subsection~\ref{sec:alg-group-prelim}.
Choose an element of $\mG_N(\Bbbk)$ taking
$\Psi^{[N]}$ to $\Phi^{[N]}$. Lifting its arrow images yields
an $\ell$-unital algebra endomorphism $\alpha:\mF\to\mF$,
which is an automorphism by Lemma~\ref{lem:inverse}. Then
$(\alpha(\Psi))^{[N]}=\Phi^{[N]}$. By $N$-determinacy, there is
$\beta\in\mAut_\ell(\mF)$ such that $\beta(\alpha(\Psi))=\Phi$.
Thus $\beta\alpha$ is a right equivalence from $\Psi$ to $\Phi$.
\end{proof}

\begin{remark}
Over an algebraically closed field, all rigid potentials on a fixed
quiver are right equivalent by Theorem~\ref{thm:unique}(2),
and hence have isomorphic Jacobian algebras by
Lemma~\ref{lem:jac-invariance}. Moreover, since $\lambda\Phi$ is rigid whenever $\Phi$ is rigid,
Theorem~\ref{thm:unique}(2) implies that $\Phi$ and $\lambda\Phi$
are right equivalent for every $\lambda\in\Bbbk^\times$.
Examples of rigid potentials on $2$-acyclic quivers, including
quivers not mutation equivalent to acyclic ones, appear in
\cite[Examples~8.3 and~8.7]{DWZ08}. Further examples arise
from ideal triangulations of marked surfaces with nonempty
boundary \cite{La09}.
\end{remark}

\begin{example}
\label{ex:real-signatures}\normalfont
Let $\Bbbk=\mathbb R$, and let $Q$ be the quiver with one vertex and
$n\geq1$ loops $x_1,\ldots,x_n$. For $0\leq p\leq n$, set
\[
 W_p=\pi\left(\sum_{i=1}^{p}x_i^2
                   -\sum_{i=p+1}^{n}x_i^2\right).
\]
Each $W_p$ is rigid: its cyclic derivatives are $2x_i$ for
$i\leq p$ and $-2x_i$ for $i>p$, so $\mJ(W_p)=\mm$.
The $W_p$ are pairwise not right equivalent over $\mathbb R$.
Suppose there is an automorphism $\alpha\in\mAut_\ell(\mF)$ such that
$\alpha W_p=W_q$. Readily, $\alpha(x)\equiv Ax\pmod{\mm^2}$ for some
$A\in\mathrm{GL}_n(\mathbb R)$, where $x=(x_1,\ldots,x_n)^{\mathsf T}$.
Comparing quadratic terms of $\alpha(W_p)$ and $W_q$ gives
\[
 A^{\mathsf T}\operatorname{diag}(I_p,-I_{n-p})A
   =\operatorname{diag}(I_q,-I_{n-q}).
\]
Sylvester's law of inertia forces $p=q$.

In fact, these are all the right-equivalence classes of rigid
potentials on $Q$. Let $\Phi\in\pi(\mm^2)$ be rigid.
By Lemma~\ref{lem:quadratic-splitting}, after a right equivalence
we may assume
\[
 \Phi=\sum_{i=1}^{M}\pi(\lambda_i x_i^2)+\Phi_{\mathrm{red}},
\]
where $0\leq M\leq n$, each $\lambda_i\in\mathbb R^\times$,
and $\Phi_{\mathrm{red}}\in\pi(\mm^3)$ involves none of the
$x_i$ for $1\leq i\leq M$. Since $\Phi$ is rigid, the
path-length-one parts of its cyclic derivatives span $\mm/\mm^2$.
Thus $M=n$, so $\Phi_{\mathrm{red}}=0$.
By Sylvester's law of inertia, $\Phi$ is right equivalent to
some $W_p$. Consequently, there are exactly $n+1$
right-equivalence classes of rigid potentials on $Q$ over $\mathbb R$.
\end{example}

\section{Determinacy bounds}\label{sec:weighted}
The finite-determinacy criterion guarantees that a trace-finite
potential is determined up to right equivalence by some finite jet,
but does not specify how high that jet must be. In this section, we
seek \emph{determinacy bounds} for weighted-homogeneous potentials:
explicit integers $N$ such that agreement of the ordinary $N$-jets
implies right equivalence.

If $Q_1=\varnothing$, then $\pi(\mm^2)=0$ and the zero potential is
$0$-determined. We therefore assume $Q_1\ne\varnothing$.
Throughout Section~\ref{sec:weighted} and Section~\ref{sec:graded-invariants}, a weight system
$\zeta=(\zeta_a)_{a\in Q_1}$ assigns a strictly positive integer
to each arrow. Fix such a system, and write
\[
 \underline\zeta=\min_{a\in Q_1}\zeta_a,\qquad
 \bar\zeta=\max_{a\in Q_1}\zeta_a.
\]
For a path $u=a_1\cdots a_r$, put
$d_\zeta(u)=\zeta_{a_1}+\cdots+\zeta_{a_r}$, and set
$d_\zeta(e_i)=0$. Then
\[
 \underline\zeta|u|\leq d_\zeta(u)\leq\bar\zeta|u|.
\]
Since $Q$ is finite and the weights are positive, there are
only finitely many paths of any given weight. Consequently,
every $f\in\mF$ has a unique expansion
\[
f=\sum_{n\geq0} f_n,
\]
where $f_n$ is a linear combination of paths of $\zeta$-weight
$n$. The inequalities above show that
this expansion converges in the $\mm$-adic topology and that
the weight filtration defines the same topology.
Here the \emph{ordinary order} of a nonzero series or potential is
the least path length of a nonzero component, whereas \emph{weight}
refers to the fixed system $\zeta$.

The weight decomposition also descends to the trace space
$\operatorname{Tr}(\mF)$. Indeed, if $f=\sum_i f_i$ and
$g=\sum_j g_j$, then the weight-$n$ component of $[f,g]$ is
$\sum_{i+j=n}[f_i,g_j]$. Thus each weight projection
preserves $[\mF,\mF]$ and induces a projection on
$\operatorname{Tr}(\mF)$. Since $\operatorname{Tr}(\mF)$ is PC,
every potential has a unique convergent decomposition
into weight components.

\begin{definition}
A potential $\Phi$ on $Q$ is called
\emph{$\zeta$-homogeneous of weight $n$} if it is a linear
combination of cyclic classes of cycles of $\zeta$-weight $n$.
For $\Phi\ne0$, its weight $n$ is uniquely determined by the
weight decomposition above. We use the same notation as for paths
and write $d_\zeta(\Phi)=n$.
By convention, $d_\zeta(0)=-\infty$.
A potential is called \emph{weighted-homogeneous} if it is
homogeneous with respect to some weight system.
\end{definition}

\begin{remark}\label{rem:weighted-quasihomogeneous}
Recall that a potential $\Phi$ is \emph{quasi-homogeneous} if
$\Phi\in\pi(\mJ(\Phi))$, in the terminology of Hua and Zhou
\cite[Definition~2.6]{HZ21}. Every $\zeta$-homogeneous
potential is quasi-homogeneous. The assertion is clear for
$\Phi=0$. For $\Phi\ne0$, the weighted cyclic Euler identity gives
\[
 d_\zeta(\Phi)\,\Phi
   =\pi\left(\sum_{a\in Q_1}\zeta_a\,a\,\partial_a\Phi\right).
\]
In particular, the class of $\Phi$ in
$\operatorname{Tr}(\Lambda(\Phi))$ vanishes. In the one-vertex case,
Hua and Zhou established a converse up to right equivalence for
Jacobi-finite potentials of order at least three over an algebraically
closed field of characteristic zero \cite[Theorem~4.1]{HZ21}.
\end{remark}

For a $\zeta$-homogeneous potential $\Phi$, the cyclic derivatives
are $\zeta$-homogeneous. Hence the closed Jacobian ideal is
$\zeta$-homogeneous. Let $\mF_d$ denote the subspace spanned by paths
of $\zeta$-weight $d$, and put
$\Lambda(\Phi)_d=(\mF_d+\mJ(\Phi))/\mJ(\Phi)$ and
$\operatorname{Tr}(\Lambda(\Phi))_d
=\pi_{\Lambda(\Phi)}(\Lambda(\Phi)_d)$.
The Jacobian algebra and its trace space inherit completed $\zeta$-weight
gradings
\[
 \Lambda(\Phi)=\prod_{d\geq0}\Lambda(\Phi)_d,\qquad
 \operatorname{Tr}(\Lambda(\Phi))
 =\prod_{d\geq0}\operatorname{Tr}(\Lambda(\Phi))_d.
\]
Each $\zeta$-weight component is finite dimensional. Define the Hilbert series of the graded Jacobian algebra and its graded trace space by
\[
H_{\zeta,\Phi}(t)=\sum_{d\geq0}\dim_\Bbbk\Lambda(\Phi)_d\,t^d,
\qquad
T_{\zeta,\Phi}(t)=\sum_{d\geq0}\dim_\Bbbk\operatorname{Tr}(\Lambda(\Phi))_d\,t^d.
\]
If $\Phi$ is trace-finite, then $T_{\zeta,\Phi}(t)$ is
a polynomial; set
\begin{equation}\label{eq:top-weight}
 a_\zeta^{\mathrm{tr}}(\Phi)=\deg T_{\zeta,\Phi}(t).
\end{equation}
Vertex classes occur in weight zero, so the polynomial degree is well defined
under the trace-finite hypothesis. The notation is motivated by
the commutative case, where the Milnor algebra of a
weighted-homogeneous isolated hypersurface singularity is a
graded Artinian Gorenstein algebra, whose $a$-invariant equals
its highest nonzero weight (Remark~\ref{rem:commutative-weighted}).
The superscript $\mathrm{tr}$ indicates that our quantity records
the highest nonzero weight of the \emph{trace space}.

\begin{theorem}\label{thm:weighted-general}
Let $\Phi\in\pi(\mm^2)$ be $\zeta$-homogeneous and trace-finite.
Then $\Phi$ is $N$-determined, where
\begin{equation}\label{eq:weighted-general-bound}
 N=\left\lfloor\frac{\max\{d_\zeta(\Phi),a_\zeta^{\mathrm{tr}}(\Phi)\}}
                         {\underline\zeta}\right\rfloor.
\end{equation}
\end{theorem}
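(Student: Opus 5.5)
The plan is to eliminate the perturbation $\Psi-\Phi$ one $\zeta$-weight at a time, rather than by path length. The $\zeta$-grading makes the relevant inclusions exact in each weight. Put $D=\max\{d_\zeta(\Phi),a_\zeta^{\mathrm{tr}}(\Phi)\}$ and $d=d_\zeta(\Phi)$. If $\Psi^{[N]}=\Phi^{[N]}$, then $\Theta=\Psi-\Phi\in\pi(\mm^{N+1})$. Every cycle of length at least $N+1$ has weight at least $\underline\zeta(N+1)>D$, by the choice of $N$. So $\Theta$ has only weight components of weight $w>D$. It therefore suffices to show that any potential $\Phi+\Theta$, with $\Theta$ concentrated in weights $>D$, is right equivalent to $\Phi$.

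\emph{Step 1 (graded infinitesimal surjectivity).} Let $w>D$. Since $w>a_\zeta^{\mathrm{tr}}(\Phi)$, we have $\operatorname{Tr}(\Lambda(\Phi))_w=0$. By Lemma~\ref{lem:jacobi-trace} this means that the weight-$w$ part of $\pi(\mm)$ lies in $\pi(\mJ(\Phi))$. Both $\pi(\mJ(\Phi))=\Phi_\#(\mDer_\ell(\mF))$ (Lemma~\ref{lem:cycimage}) and the map $\Phi_\#$ are compatible with weights, because $\Phi$ and hence each $\partial_a\Phi$ is homogeneous. Taking weight-$w$ components of $\Theta_w=\pi\bigl(\sum_a u_a\partial_a\Phi\bigr)$, we may therefore assume each $u_a$ is homogeneous of weight $w-d+\zeta_a$. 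Define $\delta_w$ by $a\mapsto u_a$. This is a derivation raising $\zeta$-weight by $k=w-d$, and $k>0$ because $w>d$. Consequently each $u_a$ has weight $>\zeta_a>0$, so $u_a\in\mm$ and $\delta_w$ is augmentation preserving.

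\emph{Step 2 (weighted correction).} I would run the computation of Lemma~\ref{lem:correction} in the weight filtration instead of the length filtration. The derivation $\delta=\delta_w$ raises weight by $k\geq1$, so $\exp(-\delta)$ converges; indeed the weight topology coincides with the $\mm$-adic one, and $\delta^s$ raises weight by $sk$. Thus $\exp(-\delta)$ is an $\ell$-unital automorphism. The same identity \eqref{eq:remainder} then shows that $\exp(-\delta)(\Phi+\Theta)-\Phi$ has all weight components of weight at least $w+k>w$, apart from the untouched higher components of $\Theta$. In other words, one step kills the lowest weight component $\Theta_w$ and introduces only terms of weight strictly greater than $w$. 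Iterating, the lowest weight $w_i$ of the remaining perturbation strictly increases, so $w_i\to\infty$. The shifts $k_i=w_i-d$ also tend to infinity.

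\emph{Step 3 (convergence).} The main point to check is the analogue of Lemma~\ref{lem:products} for these automorphisms $\alpha_i=\exp(-\delta_{w_i})$. For each arrow, $\alpha_i(a)-a$ has weight at least $\zeta_a+k_i$. Hence it lies in $\mm^{m_i}$ with $m_i\geq\lceil(\underline\zeta+k_i)/\bar\zeta\rceil$, since weight at most $\bar\zeta$ times length. Thus $\alpha_i\in\mAut_\ell^{m_i-1}(\mF)$ with $m_i\to\infty$ and $m_i\geq1$, so Lemma~\ref{lem:products} applies with $d=0$. The products converge to some $\alpha\in\mAut_\ell(\mF)$. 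Because the remaining perturbation tends to $0$, continuity gives $\alpha(\Psi)=\Phi$. I expect the only delicate points to be verifying that the weight-homogeneous choice of the $u_a$ in Step~1 is legitimate, which amounts to the compatibility of the closed image of $\Phi_\#$ with weight projections, and the bookkeeping that converts weight shifts into ordinary depth for Lemma~\ref{lem:products}.
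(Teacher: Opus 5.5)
Your proposal is correct and follows the paper's proof. You reduce to perturbations concentrated in $\zeta$-weights $>D$, kill the lowest weight component with a homogeneous derivation via $\exp(-\delta)$ and the cancellation identity of Lemma~\ref{lem:correction}, convert weight shifts into ordinary depth, and conclude with Lemma~\ref{lem:products}. The only difference is minor: the paper gets $\Theta_d\in\pi(\mm\mJ(\Phi))$ from Lemma~\ref{lem:trace-weight-comparison}, whereas you get $\Theta_w\in\pi(\mJ(\Phi))$ from Lemma~\ref{lem:jacobi-trace} and observe that the homogeneous coefficients lie in $\mm$ because $w>d_\zeta(\Phi)$, which is exactly the kernel half of that lemma.
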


To prove the theorem, we need the following lemma.

\begin{lemma}\label{lem:trace-weight-comparison}
Let $\Phi\in\pi(\mm^2)$ be $\zeta$-homogeneous and trace-finite.
Then every loop $a\in Q_1$ satisfies
$\zeta_a<d_\zeta(\Phi)$, and there are natural isomorphisms
of weight components
\[
 \operatorname{Tr}(\Lambda(\Phi))_d\cong
 \left(\frac{\pi(\mm^2)}{\pi(\mm\mJ(\Phi))}\right)_d
 \qquad(d\geq\max\{1,d_\zeta(\Phi)\}).
\]
\end{lemma}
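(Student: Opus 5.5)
The plan has two parts. First I show the loop inequality, using trace-finiteness through Proposition~\ref{prop:trace-criteria}(2). Then I compare weight components of $\pi(\mJ(\Phi))$ with those of $\pi(\mm\mJ(\Phi))$, using the derivation descriptions in Lemma~\ref{lem:cycimage}. Write $d=d_\zeta(\Phi)$. If $\Phi=0$, then $\mJ(\Phi)=0$ and trace-finiteness forces $Q$ to have no loops, so there is nothing to prove. Hence assume $\Phi\neq0$.

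\emph{Loop inequality.} Suppose a loop $a$ satisfies $\zeta_a\geq d$. Fix $k\geq1$ and let $\lambda_k:\operatorname{Tr}(\mF)\to\Bbbk$ be the coefficient of the cycle $a^k$. This functional is well defined and continuous, because commutators of paths are differences of rotations of a single cycle, and $a^k$ is its own only rotation.

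I claim that $\lambda_k$ vanishes on $\pi(\mJ(\Phi))$. Since $\pi(\mJ(\Phi))=\Phi_\#(\mDer_\ell(\mF))$ is spanned topologically by the classes $\pi(u\,\partial_b\Phi)$ with $u$ a path and $b$ an arrow, it suffices to treat these classes. The cycle $a^k$ can occur in $\pi(u\,\partial_b\Phi)$ only if $\partial_b\Phi$ contains a term $a^j$ with $j\geq0$.
\begin{itemize}
\item If $j=0$, then $\Phi$ contains the length-one cycle $b$. This is impossible because $\Phi\in\pi(\mm^2)$.
\item If $j\geq1$, then $\Phi$ contains the cycle $ba^j$ up to rotation. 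Its weight is $d=\zeta_b+j\zeta_a>\zeta_a$, contradicting $\zeta_a\geq d$.
\end{itemize}
Hence the classes $\pi(a^k)$, $k\geq1$, are linearly independent modulo $\pi(\mJ(\Phi))$, since $\lambda_k(\pi(a^m))=\delta_{km}$. Thus $\pi(\mm)/\pi(\mJ(\Phi))$ is infinite dimensional, which contradicts Proposition~\ref{prop:trace-criteria}(2).

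\emph{Isomorphisms.}
\begin{enumerate}
\item By Lemma~\ref{lem:jacobi-trace}, $\operatorname{Tr}(\Lambda(\Phi))\cong\ell\oplus\pi(\mm)/\pi(\mJ(\Phi))$. The isomorphism is compatible with weights because $\mJ(\Phi)$ is $\zeta$-homogeneous, so for $d'\geq1$ it identifies $\operatorname{Tr}(\Lambda(\Phi))_{d'}$ with $\bigl(\pi(\mm)/\pi(\mJ(\Phi))\bigr)_{d'}$.
\item The subspaces $\pi(\mJ(\Phi))=\Phi_\#(\mDer_\ell(\mF))$ and $\pi(\mm\mJ(\Phi))=\Phi_\#(\mDer^1_\ell(\mF))$ are weight-homogeneous. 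A derivation decomposes into weight-shift components, and $\Phi_\#$ is graded of degree $d$. So the weight-$d'$ part of $\Phi_\#(\xi)$ is $\Phi_\#$ of the corresponding component of $\xi$, and each of these spaces is the product of its weight pieces.
\item The two derivation spaces differ only by derivations sending loops to scalar multiples of the vertex idempotent. These contribute the classes $\pi(\partial_a\Phi)$, $a$ a loop, of weight $d-\zeta_a<d$. Hence $\pi(\mJ(\Phi))_{d'}=\pi(\mm\mJ(\Phi))_{d'}$ for $d'\geq d$.
\item Likewise, $\pi(\mm)$ and $\pi(\mm^2)$ differ only by the loop classes $\pi(a)$, of weight $\zeta_a<d$ by the first part. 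So $\pi(\mm)_{d'}=\pi(\mm^2)_{d'}$ for $d'\geq d$.
\end{enumerate}
Combining these identifications gives the stated isomorphisms for $d'\geq\max\{1,d\}$.

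The main obstacle is the loop inequality. It needs the careful argument that no element of the closed subspace $\pi(\mJ(\Phi))$ can involve a pure loop power. I plan to handle this through the closedness in Lemma~\ref{lem:cycimage} and the continuity of $\lambda_k$. The graded comparison is bookkeeping once homogeneity of the images is recorded.
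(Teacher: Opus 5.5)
Your proposal is correct and follows essentially the paper's proof. Your functionals $\lambda_k$ are the coefficients of $t^k$ under the paper's surjection $\Lambda(\Phi)\to\Bbbk[[t]]$ (sending $a\mapsto t$ and all other arrows to $0$), and your weight-by-weight comparisons of $\pi(\mJ(\Phi))$ with $\pi(\mm\mJ(\Phi))$ and of $\pi(\mm)$ with $\pi(\mm^2)$ are the paper's kernel/cokernel argument for the canonical map $\pi(\mm^2)/\pi(\mm\mJ(\Phi))\to\operatorname{Tr}(\Lambda(\Phi))$. One small imprecision: a commutator of two paths need not be a difference of rotations of a cycle (e.g.\ $[u,e_{s(u)}]=u$ for a nonloop arrow $u$), but such terms are non-cycles that never involve $a^k$, so $\lambda_k$ is still well defined.
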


\begin{proof}
If $\Phi=0$, trace-finiteness forces $Q$ to be acyclic:
an oriented cycle would yield infinitely many independent cyclic
classes in $\operatorname{Tr}(\mF)$. Hence the claimed isomorphism holds in positive weights. We may therefore assume $\Phi\ne0$.
Put $M=d_\zeta(\Phi)$. Suppose a loop $a$ had $\zeta_a\geq M$.
As $\Phi\in\pi(\mm^2)$, the arrow $a$ cannot occur in any cyclic
monomial of $\Phi$. Sending $a$ to $t$, the other arrows to zero,
the vertex idempotent corresponding to $s(a)=t(a)$ to $1$, and the
other vertex idempotents to zero gives a continuous surjection
$\Lambda(\Phi)\to\Bbbk[[t]]$. This induces a surjection on trace spaces,
contradicting finite dimensionality. Thus every loop has weight
smaller than $M$, so $\pi(\mm)_d=\pi(\mm^2)_d$ and hence
$(\pi(\mF)/\pi(\mm^2))_d=0$ for all $d\geq M$. In addition, the quotient
$\pi(\mJ(\Phi))/\pi(\mm\mJ(\Phi))$ is spanned by the cyclic
classes of $\partial_a\Phi$ for loops $a$. These classes have
weights $M-\zeta_a<M$, so
$(\pi(\mJ(\Phi))/\pi(\mm\mJ(\Phi)))_d=0$ for $d\geq M$.
Now consider the canonical map
\[
\frac{\pi(\mm^2)}{\pi(\mm\mJ(\Phi))}
\longrightarrow
\frac{\pi(\mF)}{\pi(\mJ(\Phi))}
=\operatorname{Tr}(\Lambda(\Phi)).
\]
Its kernel is a graded subspace of
$\pi(\mJ(\Phi))/\pi(\mm\mJ(\Phi))$, and its cokernel is a quotient
of $\pi(\mF)/\pi(\mm^2)$. The result follows directly.
\end{proof}

\begin{proof}[Proof of Theorem~\ref{thm:weighted-general}]
If $\Phi=0$, trace-finiteness forces $Q$ to be acyclic.
Thus $\pi(\mm^2)=0$, and the assertion holds with $N=0$.
Hence assume $\Phi\ne0$, and put
$M=d_\zeta(\Phi)$ and $D=\max\{M,a_\zeta^{\mathrm{tr}}(\Phi)\}$.
Every path of length at least $N+1$ has $\zeta$-weight at least
$\underline\zeta(N+1)>D$. Thus, if $\Psi^{[N]}=\Phi^{[N]}$, then $\Psi-\Phi$ has no terms of $\zeta$-weight at most $D$.
It therefore suffices to show that every potential $\Psi$ such that
$\Psi-\Phi$ has no terms of $\zeta$-weight at most $D$ is right equivalent to $\Phi$.

Write $\Psi=\Phi+\Theta^{(0)}$, and let $\Theta_d^{(0)}$ be the nonzero component of $\Theta^{(0)}$ of lowest $\zeta$-weight, where $d>D$. By Lemma~\ref{lem:trace-weight-comparison}, the weight-$d$ component of $\pi(\mm^2)/\pi(\mm\mJ(\Phi))$ vanishes. Hence $\Theta_d^{(0)}\in\pi(\mm\mJ(\Phi))$, so we can write
\[
 \Theta_d^{(0)}=\pi\left(\sum_a u_a\partial_a\Phi\right),
 \qquad u_a\in e_{t(a)}\mm e_{s(a)}.
\]
Taking $\zeta$-homogeneous components replaces $u_a$ by its component
of weight $d-M+\zeta_a$. The arrow assignment
$\delta(a)=u_a$ then defines an $\ell$-derivation satisfying
\[
 \delta(\Phi)=\Theta_d^{(0)}.
\]
The derivation $\delta$ raises $\zeta$-weights by $d-M>0$
and therefore preserves $\mm$. Since the $\zeta$-weight filtration
generates the $\mm$-adic topology, the exponential series
$\exp(-\delta)=\sum_{n\geq0}(-\delta)^n/n!$ converges.
By the iterated Leibniz formula,
$\exp(-\delta)$ is an $\ell$-unital algebra automorphism with inverse
$\exp(\delta)$. Though the map $\mm/\mm^2\to\mm/\mm^2$ induced by
$\exp(-\delta)$ need not be the identity, the cancellation in
Lemma~\ref{lem:correction} still holds, which gives that
$\exp(-\delta)(\Phi+\Theta_d^{(0)})-\Phi$ has lowest $\zeta$-weight at least
$2d-M>d$. Since $\Theta^{(0)}-\Theta_d^{(0)}$ has $\zeta$-weights greater than $d$ and
$\exp(-\delta)$ does not lower $\zeta$-weights, the lowest $\zeta$-weight of
$\exp(-\delta)(\Phi+\Theta^{(0)})-\Phi$ is greater than $d$.

Iterating, at the $n$th step let $d_n$ be the lowest
$\zeta$-weight of $\Theta^{(n)}$. The correcting automorphism $\alpha_n=\exp(-\delta_n)$, with
$\delta_n\in\mDer_\ell(\mF)$ satisfying
$\delta_n(\Phi)=\Theta^{(n)}_{d_n}$, gives
\[
\Phi+\Theta^{(n+1)}=\alpha_n(\Phi+\Theta^{(n)}),
\qquad d_{n+1}>d_n
\]
if the process does not terminate. The same argument as for $\alpha_0$ shows that each $\alpha_n$ is a well-defined $\ell$-unital algebra automorphism of $\mF$. Each correcting automorphism satisfies
\[
\operatorname{ord}(\alpha_n(a)-a)
\geq \left\lceil\frac{\zeta_a+d_n-M}{\bar\zeta}\right\rceil
\qquad(a\in Q_1),
\]
where $\bar\zeta=\max_{a\in Q_1}\zeta_a$. Thus the ordinary depths
of the $\alpha_n$ tend to infinity. By Lemma~\ref{lem:products},
the products $\alpha_n\cdots\alpha_0$
converge to an automorphism $\alpha\in\mAut_\ell(\mF)$, which satisfies
$\alpha(\Psi)=\Phi$.
\end{proof}

\Needspace{12\baselineskip}
\begin{example}\label{ex:weighted-bound-two-terms}
Let $Q$ have one vertex and two loops $x,y$, with
$\zeta(x)=3$ and $\zeta(y)=2$. Consider
\[
 \Phi=\pi(x^2+y^3),\qquad \Psi=\pi(x^2y+y^4).
\]
Their Jacobian algebras have presentations
\[
\Lambda(\Phi)=\Bbbk[[y]]/(y^2),\qquad
\Lambda(\Psi)=\Bbbk\langle\langle x,y\rangle\rangle/
\overline{(xy+yx,\,x^2+4y^3)}.
\]
The algebra $\Lambda(\Phi)$ has basis $1,y$, and its commutator subspace is zero.
By the Gr\"obner--Shirshov basis theory for closed ideals of complete free algebras
\cite[Section~2]{IL21}, a basis of $\Lambda(\Psi)$ consists of
$1,y,\ldots,y^5$ and $x,xy,xy^2$.
Its commutator subspace is spanned by
$xy,xy^2,y^4$. Thus both potentials are Jacobi-finite, with trace Hilbert series
\[
 T_{\zeta,\Phi}(t)=1+t^2,\qquad
 T_{\zeta,\Psi}(t)=1+t^2+t^3+t^4+t^6+t^{10}.
\]
Consequently,
\[
 d_\zeta(\Phi)=6>2=a_\zeta^{\mathrm{tr}}(\Phi),\qquad
 d_\zeta(\Psi)=8<10=a_\zeta^{\mathrm{tr}}(\Psi).
\]
Thus neither term in the maximum of
Theorem~\ref{thm:weighted-general} can be omitted.
\end{example}

\begin{corollary}\label{cor:homogeneous-exact}
Suppose that $\Phi\in\pi(\mm^2)$ is $\zeta$-homogeneous and trace-finite, with $\zeta_a=1$ for each $a\in Q_1$. Then its smallest determinacy order is
\[
 \max\{d_\zeta(\Phi),a_\zeta^{\mathrm{tr}}(\Phi)\}.
\]
\end{corollary}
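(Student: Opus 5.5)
With all weights equal to one, $\underline\zeta=1$ and $d_\zeta$ is ordinary path length, so Theorem~\ref{thm:weighted-general} already shows that $\Phi$ is $D$-determined, where $D=\max\{d_\zeta(\Phi),a_\zeta^{\mathrm{tr}}(\Phi)\}$. It therefore remains to show that $\Phi$ is not $(D-1)$-determined. We may assume $\Phi\ne0$: for $\Phi=0$ the quiver is acyclic, $\pi(\mm^2)=0$, and the claim is degenerate. We treat the two possible values of $D$ separately.

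\emph{Case $D=a_\zeta^{\mathrm{tr}}(\Phi)>d_\zeta(\Phi)=M$.} Argue by contradiction. If $\Phi$ were $(D-1)$-determined, Proposition~\ref{prop:fd-necessary} would give $\pi(\mm^{D})\subseteq\pi(\mm\mJ(\Phi))$. In particular the degree-$D$ component of $\pi(\mm^2)/\pi(\mm\mJ(\Phi))$ would vanish. Since $D\geq\max\{1,M\}$, Lemma~\ref{lem:trace-weight-comparison} identifies this component with $\operatorname{Tr}(\Lambda(\Phi))_D$, which is nonzero by the definition of $a_\zeta^{\mathrm{tr}}$. This is a contradiction. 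One bookkeeping point must be checked: the inclusion is stated for the whole filtered piece, but because $\mJ(\Phi)$ is homogeneous, the graded component of $\pi(\mm\mJ(\Phi))$ in degree $D$ equals $\pi(\mm\mJ(\Phi))_D$. The same argument also covers the case $a_\zeta^{\mathrm{tr}}(\Phi)=M$.

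\emph{Case $D=M\ge a_\zeta^{\mathrm{tr}}(\Phi)$.} Here the $(M-1)$-jet of $\Phi$ is zero, so $(M-1)$-determinacy would make $\Phi$ right equivalent to $0$, which is itself a potential with the same $(M-1)$-jet. We must show that this cannot happen. Right equivalence preserves the Jacobian algebra, and $\mJ(0)=0$, so $\Lambda(0)=\mF$. This algebra has infinite-dimensional trace space whenever $Q$ has an oriented cycle, and $Q$ does have one because $\Phi\ne0$. On the other hand, $\Phi$ is trace-finite, and Lemma~\ref{lem:jac-invariance} together with Lemma~\ref{lem:jacobi-trace} shows that trace-finiteness is invariant under right equivalence. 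This contradiction shows that $\Phi$ is not $(M-1)$-determined.

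The main obstacle is expected to be the first case, specifically checking that Lemma~\ref{lem:trace-weight-comparison} applies exactly at the top degree $d=D$ and that the graded pieces match. Once that is verified, the remaining steps are formal.
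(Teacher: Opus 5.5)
Your proposal is correct and is essentially the paper's argument. The paper uses Theorem~\ref{thm:weighted-general} for the upper bound, the zero potential sharing the $(M-1)$-jet for the lower bound $M$, and Proposition~\ref{prop:fd-necessary} combined with Lemma~\ref{lem:trace-weight-comparison} for the lower bound $a_\zeta^{\mathrm{tr}}(\Phi)$; it just proves both lower bounds outright instead of splitting into cases. Your detour through trace-finiteness to rule out $\Phi\sim_{\mathrm r}0$ is valid but unnecessary, since $\alpha\cdot 0=0\neq\Phi$ for every $\alpha\in\mAut_\ell(\mF)$.
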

\begin{proof}
Let $N_0$ be the smallest determinacy order of $\Phi$.
If $\Phi=0$, trace-finiteness forces $Q$ to be acyclic, so its only potential
is zero and $N_0=0$, as asserted. Assume henceforth that $\Phi\ne0$, and put
$M=d_\zeta(\Phi)$, the common path length of the nonzero cyclic terms of $\Phi$.
Theorem~\ref{thm:weighted-general} gives the upper bound. An order smaller than
$M$ cannot determine $\Phi$, since the zero potential has the same jet and is not
right equivalent to $\Phi$. Thus $N_0\geq M$. By Proposition~\ref{prop:fd-necessary},
$\pi(\mm^{N_0+1})\subseteq\pi(\mm\mJ(\Phi))$.
It follows that
$\left(\pi(\mm^2)/\pi(\mm\mJ(\Phi))\right)_d=0$ for all $d>N_0$.
Then, by Lemma~\ref{lem:trace-weight-comparison},
$\operatorname{Tr}(\Lambda(\Phi))_d=0$ for all $d>N_0$.
Thus $N_0\geq a_\zeta^{\mathrm{tr}}(\Phi)$, proving the lower bound.
\end{proof}

\begin{prop}\label{prop:uniform-weight-bound}
For every integer $M\geq2$, there exists an integer
$D=D(\zeta,M)$ such that
\[
 a_\zeta^{\mathrm{tr}}(\Phi)\leq D
\]
for every Jacobi-finite $\zeta$-homogeneous potential
$\Phi\in\pi(\mm^2)_M$.
\end{prop}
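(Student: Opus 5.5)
The plan is to parametrize all candidates by a single finite-dimensional affine space and use Noetherianity of its Zariski topology to get a uniform vanishing degree for the Jacobian algebras. The bound on $a_\zeta^{\mathrm{tr}}$ then follows because $\operatorname{Tr}(\Lambda(\Phi))_d$ is a quotient of $\Lambda(\Phi)_d$.

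\emph{Setup.} Put $V=\pi(\mm^2)_M$; it is finite dimensional, since only finitely many paths have a given $\zeta$-weight. Work on the affine space $\mathbb A(V)$. For $\Phi\in V$ and each weight $d$, the weight-$d$ part of $\mJ(\Phi)$ is
\[
 \mJ(\Phi)_d=\operatorname{span}\{u\,\partial_a\Phi\,v : u,v\ \text{paths},\ d_\zeta(u)+d_\zeta(v)+M-\zeta_a=d\}.
\]
No closure issue arises here: $\mJ(\Phi)$ is $\zeta$-homogeneous, the weight filtration defines the $\mm$-adic topology, and each $\mF_d$ is finite dimensional, so the weight-$d$ component of the closure equals the weight-$d$ component of the algebraic ideal. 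Hence
\[
 \dim\Lambda(\Phi)_d=\dim\mF_d-\operatorname{rank}A_d(\Phi),
\]
where $A_d(\Phi)$ is a matrix whose entries are linear in the coefficients of $\Phi$. Consequently the locus $U_d=\{\Lambda(\Phi)_d=0\}$, which is the nonvanishing of some maximal minor of $A_d$, is a Zariski open subscheme of $\mathbb A(V)$ defined over $\Bbbk$.

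\emph{Propagation of vanishing.} Set $\bar\zeta=\max_a\zeta_a$. I claim that if $\Lambda(\Phi)_d=0$ for all $d$ in a window $[k,k+\bar\zeta-1]$ with $k\ge1$, then $\Lambda(\Phi)_d=0$ for all $d\ge k$. The argument is by induction on $d$. Any path of weight $d\ge k+\bar\zeta$ factors as $a\,u$ with $d_\zeta(u)=d-\zeta_a\in[d-\bar\zeta,d-1]$. By induction $u\in\mJ(\Phi)+\mF_{d-\zeta_a}$-level vanishing, i.e.\ the class of $u$ in $\Lambda(\Phi)$ is zero, and hence so is the class of $au$.

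Put $W_k=\bigcap_{d=k}^{k+\bar\zeta-1}U_d$, which is open. The claim gives $W_k\subseteq W_{k+1}$. Moreover, a homogeneous $\Phi$ is Jacobi-finite if and only if $\Lambda(\Phi)_d=0$ for all large $d$, since $\Lambda(\Phi)=\prod_d\Lambda(\Phi)_d$. Hence the rational Jacobi-finite points of $V$ are exactly $\bigcup_k W_k(\Bbbk)$.

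\emph{Stabilization and conclusion.} Since $\mathbb A(V)$ is a Noetherian topological space, the ascending chain of open subschemes $W_1\subseteq W_2\subseteq\cdots$ stabilizes at some $W_K$. Then every Jacobi-finite $\Phi\in V$ lies in $W_K(\Bbbk)$, so $\Lambda(\Phi)_d=0$ for all $d\ge K$. The trace component $\operatorname{Tr}(\Lambda(\Phi))_d=\pi_{\Lambda(\Phi)}(\Lambda(\Phi)_d)$ is a quotient of $\Lambda(\Phi)_d$, so it also vanishes for $d\ge K$, and therefore $a_\zeta^{\mathrm{tr}}(\Phi)\le D:=K-1$. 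The field need not be algebraically closed: the $U_d$ and $W_k$ are defined over $\Bbbk$, and stabilization of the chain of open subschemes implies stabilization on $\Bbbk$-points.

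The step I expect to require the most care is the first one: justifying that $\dim\Lambda(\Phi)_d$ is computed by the finite matrix $A_d(\Phi)$, with no contribution from the topological closure, and checking that the weight-propagation argument interacts correctly with that identification. Once both are in place, the Noetherian argument is formal.
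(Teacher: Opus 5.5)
Your proposal is correct and is essentially the paper's proof. Your windows $W_k$ are the paper's sets $U_N$, and your propagation step (peeling off the last arrow inductively) is the same observation as the paper's choice of the shortest initial segment of weight at least $N$. The conclusion then comes, as in the paper, from Noetherian stabilization of the ascending chain of Zariski-open sets, followed by passing from $\Lambda(\Phi)_d$ to its quotient $\operatorname{Tr}(\Lambda(\Phi))_d$. Your explicit check that the weight-$d$ part of the closed ideal $\mJ(\Phi)$ is spanned by the finitely many products $u\,\partial_a\Phi\,v$ is a detail the paper uses without comment. The garbled sentence in your induction step should simply say that the class of $u$ in $\Lambda(\Phi)$ vanishes.
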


\begin{proof}
Let $V=\pi(\mm^2)_M$, which is finite dimensional. For each $N\geq1$, define
\[
 U_N=\{\Phi\in V:\Lambda(\Phi)_n=0\text{ for }N\leq n<N+\bar\zeta\}.
\]
For $\Phi\in U_N$, let $p$ be a path of $\zeta$-weight at least $N$,
and list its arrows in traversal order as $a_1,\ldots,a_\ell$.
Choose the least $j$ such that $\sum_{i=1}^j\zeta_{a_i}\geq N$.
Then
\[
 N\leq\sum_{i=1}^j\zeta_{a_i}<N+\bar\zeta,
\]
so the corresponding subpath has weight in $[N,N+\bar\zeta-1]$ and hence
represents zero in $\Lambda(\Phi)$. Thus $p$ itself represents zero,
and consequently
\[
 U_N=\{\Phi\in V:\Lambda(\Phi)_n=0\text{ for all }n\geq N\}.
\]
In particular, $U_N\subseteq U_{N+1}$, and $\bigcup_{N\geq1}U_N$
is precisely the Jacobi-finite locus in $V$.

For each $n\geq1$, the condition $\Lambda(\Phi)_n=0$ defines a
Zariski-open subset of $V$. Indeed, the weight-$n$ component of
$\mJ(\Phi)$ is spanned by the elements $u(\partial_a\Phi)v$ of
weight $n$, where $u$ and $v$ range over paths. The corresponding
matrix has entries linear in the coefficients of $\Phi$, and the
vanishing of $\Lambda(\Phi)_n$ is a maximal-rank condition. Since
$U_N$ is defined by finitely many such conditions, it is Zariski open.
The affine space $V$ is Noetherian, so the increasing sequence
$U_1\subseteq U_2\subseteq\cdots$ stabilizes, say at $U_{N_0}$.
Every Jacobi-finite $\Phi\in V$ therefore satisfies
$\Lambda(\Phi)_n=0$ for all $n\geq N_0$, and hence
$\operatorname{Tr}(\Lambda(\Phi))_n=0$ for all $n\geq N_0$.
We may take $D=N_0-1$.
\end{proof}

\section{Graded invariants under right equivalence}\label{sec:graded-invariants}
The determinacy bounds established in the previous section involve
weighted-homogeneous potentials and the graded data of their Jacobian
algebras and trace spaces. This raises the question of which of these
quantities are preserved under right equivalence. We investigate the
$\zeta$-weight of a potential and the Hilbert series of its Jacobian
algebra and trace space, beginning with the analogous results for
weighted-homogeneous isolated hypersurface singularities.

\begin{remark}\label{rem:commutative-weighted}
Let $R=\Bbbk[[x_1,\ldots,x_n]]$ be equipped with a
weight system $\zeta=(\zeta_1,\ldots,\zeta_n)$, where
$x_i$ has $\zeta$-weight $\zeta_i$. Let
$f\in(x_1,\ldots,x_n)^2$ be $\zeta$-homogeneous of weight $M$
with an isolated critical point. Put
$J_f=(\partial_{x_1}f,\ldots,\partial_{x_n}f)$ and let $\mathcal M(f)=R/J_f$ be the Milnor algebra of $f$.
Since the partial derivatives of $f$ form a regular sequence,
the well-known Hilbert series formula for graded complete intersections
(see, e.g., \cite[Exercises~10.12--10.13]{Ei95}) gives
\[
 H_{\mathcal M(f)}(t)=\prod_{i=1}^n\frac{1-t^{M-\zeta_i}}{1-t^{\zeta_i}}.
\]
This yields the classical Milnor--Orlik formula for the Milnor number
$\mu(f)=\dim_\Bbbk \mathcal M(f)$ \cite{GLS25, MO70}, together
with the formula for the $a$-invariant
$a(f)=\deg H_{\mathcal M(f)}(t)$:
\[
 \mu(f)=\prod_{i=1}^n\frac{M-\zeta_i}{\zeta_i},
 \qquad
 a(f)=nM-2\sum_{i=1}^n \zeta_i.
\]
If $g$ is a $\zeta$-homogeneous series of weight $M'$ that is
right equivalent to $f$, the chain rule gives
$\mu(g)=\mu(f)$ \cite[Section~3.1.1]{GLS25}.
Isolatedness forces each variable to occur, so
$M,M'>\max_i \zeta_i$. The Milnor--Orlik product is strictly increasing as a function of $M$ on this range; hence $M=M'$.
Consequently, $H_{\mathcal M(f)}(t)=H_{\mathcal M(g)}(t)$ and $a(f)=a(g)$.

\end{remark}

For a $\zeta$-homogeneous potential $\Phi\in\pi(\mm^2)$, put
$A=\Lambda(\Phi)$, with augmentation ideal
$A_+=\mm/\mJ(\Phi)$. The quotient $A_+/A_+^2$ is called the
\emph{space of indecomposables} of $A$. It has the following natural
identification as a graded vector space:
\begin{equation}\label{eq:indec-identification}
 \frac{A_+}{A_+^2}\cong
 \frac{\mm}{\mm^2+\mJ(\Phi)}.
\end{equation}
When $\Phi$ is reduced (i.e., $\Phi\in\pi(\mm^3)$),
$A_+/A_+^2\cong\mm/\mm^2$.
Although $A=\prod_{d\geq0}A_d$ is complete with respect to
its $\zeta$-weight grading, each $A_+^r/A_+^{r+1}$ is
finite dimensional, being a quotient of $\mm^r/\mm^{r+1}$.
Its weight decomposition is therefore a finite direct sum.
Thus the associated graded algebra with respect to the
augmentation-ideal filtration is the algebraic direct sum
\[
 \operatorname{gr}(A)=\bigoplus_{r\geq0}\frac{A_+^r}{A_+^{r+1}}
 =\bigoplus_{r,d\geq0}\left(A_+^r/A_+^{r+1}\right)_d.
\]
Multiplication sends bidegrees $(r,d)$ and $(q,e)$ to
$(r+q,d+e)$. Consequently, $\operatorname{gr}(A)$ is a
bigraded algebra, with augmentation-ideal degree $r$ and
$\zeta$-weight $d$.
The trace space inherits the filtration by the subspaces
$\pi_A(A_+^r)$, and its associated graded vector space is
\[
 \operatorname{gr}(\operatorname{Tr}(A))
 =\bigoplus_{r\geq0}
 \frac{\pi_A(A_+^r)}{\pi_A(A_+^{r+1})}
 =\bigoplus_{r,d\geq0}
 \left(\frac{\pi_A(A_+^r)}{\pi_A(A_+^{r+1})}\right)_d.
\]
There is a natural surjection of bigraded vector spaces
\[
 \operatorname{Tr}(\operatorname{gr}(A))
 \twoheadrightarrow
 \operatorname{gr}(\operatorname{Tr}(A)),
\]
which need not be an isomorphism.

\begin{theorem}\label{thm:graded-linear-part}
Let $\Phi,\Psi\in\pi(\mm^2)$ be $\zeta$-homogeneous potentials,
and suppose that $\alpha\in\mAut_\ell(\mF)$ satisfies
$\alpha(\Phi)=\Psi$. Assume that the map induced by $\alpha$
\begin{equation}\label{eq:indec-map}
 \frac{\mm}{\mm^2+\mJ(\Phi)}
 \longrightarrow
 \frac{\mm}{\mm^2+\mJ(\Psi)}
\end{equation}
preserves $\zeta$-weights.
Then $\alpha$ induces an isomorphism
\[
 \operatorname{gr}(\Lambda(\Phi))
 \cong\operatorname{gr}(\Lambda(\Psi))
\]
of bigraded algebras. Moreover,
\[
 d_\zeta(\Phi)=d_\zeta(\Psi),\qquad
 H_{\zeta,\Phi}(t)=H_{\zeta,\Psi}(t),\qquad
 T_{\zeta,\Phi}(t)=T_{\zeta,\Psi}(t).
\]
In particular, if $\Phi$ and $\Psi$ are trace-finite, then
$a_\zeta^{\mathrm{tr}}(\Phi)=a_\zeta^{\mathrm{tr}}(\Psi)$.
\end{theorem}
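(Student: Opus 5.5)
The approach is to pass through the isomorphism of Jacobian algebras given by Lemma~\ref{lem:jac-invariance} and the filtration by powers of the augmentation ideal. Put $A=\Lambda(\Phi)$ and $B=\Lambda(\Psi)$, with augmentation ideals $A_+=\mm/\mJ(\Phi)$ and $B_+=\mm/\mJ(\Psi)$. By Lemma~\ref{lem:jac-invariance} and the remark following it, $\alpha$ induces an $\ell$-augmented isomorphism $\bar\alpha:A\to B$. Hence $\bar\alpha(A_+^r)=B_+^r$ for all $r$, and $\bar\alpha$ induces an isomorphism of $r$-graded algebras $\operatorname{gr}(A)\to\operatorname{gr}(B)$.

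\textbf{Bigraded isomorphism.} The algebra $\operatorname{gr}(A)$ is generated over $\ell=\operatorname{gr}_0$ by $\operatorname{gr}_1(A)=A_+/A_+^2$, which by \eqref{eq:indec-identification} is $\mm/(\mm^2+\mJ(\Phi))$. The degree-one component of $\operatorname{gr}(\bar\alpha)$ is exactly the map \eqref{eq:indec-map}. By hypothesis this map preserves $\zeta$-weights. An algebra homomorphism that preserves weights on generators preserves weights everywhere, so $\operatorname{gr}(\bar\alpha)$ is bigraded. Here one must check that the weight grading on $A_+^r/A_+^{r+1}$ is well defined. This holds because $\mm^r+\mJ(\Phi)$ is a closed $\zeta$-homogeneous subspace, so $A_+^r$ is a graded (product) subspace.

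\textbf{Hilbert series.} For fixed $d$, $(A_+^r)_d=0$ once $r\underline\zeta>d$, since paths in $\mm^r$ have weight at least $r\underline\zeta$. The filtration of the finite-dimensional space $A_d$ is therefore finite, and
\[
\dim A_d=\sum_{r}\dim\bigl(A_+^r/A_+^{r+1}\bigr)_d .
\]
Thus $H_{\zeta,\Phi}$ is read off from the bigraded $\operatorname{gr}(A)$, and $H_{\zeta,\Phi}=H_{\zeta,\Psi}$. The same argument applies to the trace space: the subspaces $\pi_A(A_+^r)$ are graded, and for fixed $d$ their weight-$d$ components vanish for large $r$, so $T_{\zeta,\Phi}$ is read off from $\operatorname{gr}(\operatorname{Tr}(A))$. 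To see that $\bar\alpha$ induces a \emph{bigraded} isomorphism $\operatorname{gr}(\operatorname{Tr}(A))\to\operatorname{gr}(\operatorname{Tr}(B))$, I would use the natural surjection $\operatorname{Tr}(\operatorname{gr}(A))\twoheadrightarrow\operatorname{gr}(\operatorname{Tr}(A))$. It is compatible with $\bar\alpha$, and $\operatorname{Tr}(\operatorname{gr}(\bar\alpha))$ is bigraded because $\operatorname{gr}(\bar\alpha)$ is. A map induced on a quotient by a bigraded map is bigraded, which gives $T_{\zeta,\Phi}=T_{\zeta,\Psi}$. The equality of $a_\zeta^{\mathrm{tr}}$ in the trace-finite case then follows immediately.

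\textbf{The weight $d_\zeta$ (expected main obstacle).} First, $\Phi=0$ if and only if $\Psi=0$, since $\alpha$ is invertible. For $\Phi\neq0$, I would recover $M=d_\zeta(\Phi)$ from bigraded data of $\operatorname{gr}(A)$ that must match for $B$. The generators of $\operatorname{gr}(A)$ have the weights of the surviving arrows. Its minimal defining relations, namely the images of those $\partial_a\Phi$ not in $\mm\mJ(\Phi)+\mJ(\Phi)\mm+\mm^2$, have weights $M-\zeta_a$. These are homogeneous relations for the presentation of $\operatorname{gr}(A)$ as a quotient of the tensor algebra on $\operatorname{gr}_1$, and their bigraded counts are determined by $\operatorname{gr}(A)$.

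The hard part is ensuring that some relation actually survives with a known partner arrow $a$, so that $M$ can be read off. Quadratic terms of $\Phi$ collapse arrows into $\mJ$ and shift the bookkeeping. If no relation survives, I would fall back on the Hilbert series $H_{\zeta,\Phi}$ itself and compare it with the series of the free quotient. Failing that, I would use the weighted Euler identity of Remark~\ref{rem:weighted-quasihomogeneous}, $M\Phi=\pi(\sum_a\zeta_a a\,\partial_a\Phi)$, pushed through $\alpha$, to compare the lowest-weight parts of both sides and pin down $M$.
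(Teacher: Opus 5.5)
Your arguments for the bigraded isomorphism $\operatorname{gr}(\Lambda(\Phi))\cong\operatorname{gr}(\Lambda(\Psi))$, for $H_{\zeta,\Phi}=H_{\zeta,\Psi}$, and for $T_{\zeta,\Phi}=T_{\zeta,\Psi}$ are correct and coincide with the paper's; the last one rests on the surjection $\operatorname{Tr}(\operatorname{gr}(A))\twoheadrightarrow\operatorname{gr}(\operatorname{Tr}(A))$. The gap is the equality $d_\zeta(\Phi)=d_\zeta(\Psi)$, which you leave unproved, and the route you sketch does not work as written.

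Since $\mJ(\Phi)\subseteq\mm$, the condition $\partial_a\Phi\notin\mm\mJ(\Phi)+\mJ(\Phi)\mm+\mm^2$ simply says $\partial_a\Phi\notin\mm^2$. Such elements are not relations of $\operatorname{gr}(A)$ over the tensor algebra on $\operatorname{gr}_1(A)$ at all. Their linear parts span $K_\Phi=(\mJ(\Phi)+\mm^2)/\mm^2\subseteq V=\mm/\mm^2$, which is exactly the subspace removed in forming $\operatorname{gr}_1(A)=V/K_\Phi$. The genuine relations of $\operatorname{gr}(A)$ are initial forms of all of $\mJ(\Phi)$. These need not be generated by initial forms of the $\partial_a\Phi$, so their weights need not have the form $M-\zeta_a$. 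Moreover, an element of known weight $w$ coming from some $\partial_a\Phi$ does not determine $M=w+\zeta_a$ unless you know $\zeta_a$. This is the partner-arrow problem you flag but do not resolve. The fallbacks do not close it either. The automorphism $\alpha$ itself need not preserve weights; only its induced map on indecomposables does. So the lowest-weight parts of $\alpha$ applied to the Euler identity are not controlled, and nothing you say shows that $H_{\zeta,\Phi}$ determines $M$.

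The missing idea is to use the weight distribution of $K_\Phi$ together with the symmetry of the quadratic part. Because $V$ is the same graded space for both potentials, the weight-preserving isomorphism $V/K_\Phi\cong V/K_\Psi$ gives $\dim(K_\Phi)_n=\dim(K_\Psi)_n$ for all $n$.

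\emph{Case $K_\Phi\neq0$.} Then $K_\Phi$ is spanned by the cyclic derivatives of the ordinary quadratic part $\Phi_2$, which is nonzero of weight $M$. Differentiating $\Phi_2$ along weight-$n$ arrows gives a map $V_n^*\to V_{M-n}$ with image $(K_\Phi)_{M-n}$. Since $\pi(ab)=\pi(ba)$, its matrix is the transpose of the one for weight $M-n$. Hence $\dim(K_\Phi)_n=\dim(K_\Phi)_{M-n}$, so the weights in $K_\Phi$ are symmetric about $M/2$. It follows that $M\dim K_\Phi=2\sum_n n\dim(K_\Phi)_n$, and the right-hand side is the same for $\Psi$.

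\emph{Case $K_\Phi=0$.} Then $K_\Psi=0$, and the map $\mm/(\mm^2+\mJ(\Phi))\to\mm/(\mm^2+\mJ(\Psi))$ is just the linear part $\alpha_1$ of $\alpha$ on $\mm/\mm^2$, which is therefore weight-preserving. Now $\alpha$ preserves the ordinary order $r$ and sends the length-$r$ component of $\Phi$ to that of $\Psi$ through $\alpha_1$. Hence the two $\zeta$-weights agree.
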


\begin{proof}
Put $A=\Lambda(\Phi)$ and $B=\Lambda(\Psi)$.
By Lemma~\ref{lem:jac-invariance}, $\alpha$ induces a filtered
isomorphism $\overline\alpha:A\to B$.
Under the natural identifications~\eqref{eq:indec-identification} for $\Phi$ and $\Psi$,
the induced map $A_+/A_+^2\to B_+/B_+^2$ preserves
$\zeta$-weights. Since $A_+^r/A_+^{r+1}$ is generated
by products of $r$ elements of $A_+/A_+^2$, the map
$\operatorname{gr}(\overline\alpha):\operatorname{gr}(A)\to\operatorname{gr}(B)$
is an isomorphism of bigraded algebras.
Consequently,
\[
H_{\zeta,\Phi}(t)
=\sum_{d\geq0}\sum_{r\geq0}\dim_\Bbbk(A_+^r/A_+^{r+1})_d\,t^d
=\sum_{d\geq0}\sum_{r\geq0}\dim_\Bbbk(B_+^r/B_+^{r+1})_d\,t^d
=H_{\zeta,\Psi}(t).
\]

The induced map
$\operatorname{Tr}(\operatorname{gr}(A))\to
\operatorname{Tr}(\operatorname{gr}(B))$ preserves both gradings.
The natural square
\[
\begin{array}{ccc}
 \operatorname{Tr}(\operatorname{gr}(A))
 &\xrightarrow{\operatorname{Tr}(\operatorname{gr}(\overline\alpha))}&
 \operatorname{Tr}(\operatorname{gr}(B))\\
 \big\downarrow&&\big\downarrow\\
 \operatorname{gr}(\operatorname{Tr}(A))
 &\xrightarrow{\operatorname{gr}(\operatorname{Tr}(\overline\alpha))}&
 \operatorname{gr}(\operatorname{Tr}(B))
\end{array}
\]
commutes. Since the vertical maps are surjective, the lower
horizontal isomorphism also preserves the bigrading.
Similarly, we obtain $T_{\zeta,\Phi}(t)=T_{\zeta,\Psi}(t)$.

It remains to compare the weights of the potentials.
If $\Phi=0$, then $\Psi=0$, so the assertion follows
from our convention. Otherwise, put
\[
 V=\mm/\mm^2,\qquad
 K_\Phi=(\mJ(\Phi)+\mm^2)/\mm^2\subseteq V,
\]
and define $K_\Psi$ similarly. Both are graded by $\zeta$-weights,
and the weight-preserving isomorphism $V/K_\Phi\cong
V/K_\Psi$ implies
$\dim(K_\Phi)_n=\dim(K_\Psi)_n$ for every integer $n$.

If $K_\Phi=0$, then $K_\Psi=0$. The map
$V/K_\Phi\to V/K_\Psi$ is therefore the linear part
$\alpha_1:V\to V$ of $\alpha$, which preserves
$\zeta$-weights by hypothesis. Since $\alpha$ preserves
ordinary order, let
$r=\operatorname{ord}(\Phi)=\operatorname{ord}(\Psi)$.
The nonzero ordinary length-$r$ components of $\Phi$ and
$\Psi$ are related by the map induced by $\alpha_1$ on
cyclic classes of paths of length $r$. This map preserves
$\zeta$-weights, so $d_\zeta(\Phi)=d_\zeta(\Psi)$.

Suppose now that $K_\Phi\ne0$. Let $M=d_\zeta(\Phi)$
and write $\Phi_2$ for the ordinary quadratic part of $\Phi$.
For each weight $n$, cyclic differentiation of $\Phi_2$
with respect to arrows of weight $n$ defines a linear map
$D_n:V_n^*\to V_{M-n}$, whose image is
$(K_\Phi)_{M-n}$.
Since $\pi(ab)=\pi(ba)$ for every quadratic cycle,
the matrices of $D_n$ and $D_{M-n}$ are transposes.
Thus
\[
 \dim(K_\Phi)_{M-n}=\operatorname{rank}D_n
 =\operatorname{rank}D_{M-n}=\dim(K_\Phi)_n.
\]
Consequently,
\[
 \sum_n n\dim(K_\Phi)_n
 =\sum_n(M-n)\dim(K_\Phi)_n
 =M\dim K_\Phi-\sum_n n\dim(K_\Phi)_n,
\]
and hence
\[
 d_\zeta(\Phi)=M=\frac{2\sum_n n\dim(K_\Phi)_n}{\dim K_\Phi}.
\]
The same formula holds for $\Psi$. Hence
$d_\zeta(\Phi)=d_\zeta(\Psi)$, because
$\dim(K_\Phi)_n=\dim(K_\Psi)_n$ for every integer $n$.
\end{proof}

\Needspace{12\baselineskip}
\begin{example}
The hypothesis of Theorem~\ref{thm:graded-linear-part} ensures that the
associated graded map of the induced isomorphism of Jacobian algebras
preserves $\zeta$-weights, but the isomorphism itself need not. Indeed, let the quiver $Q$, the weight system
$\zeta$, and the potential $\Psi$ be as in
Example~\ref{ex:weighted-bound-two-terms}, and put $u=1+y$.
The inner automorphism $\alpha(f)=ufu^{-1}$ fixes $\Psi$ and
induces the identity on $\mm/\mm^2$, so the hypothesis is
satisfied. However, the induced automorphism $\overline\alpha$
of $\Lambda(\Psi)$ satisfies
\[
\overline\alpha(x)=x-2xy+2xy^2,
\]
which is not $\zeta$-homogeneous. Here $x$ and $y$ also denote
their images in $\Lambda(\Psi)$.
\end{example}

\begin{remark}\label{rem:parallel-arrows-equal-weights}
If parallel arrows of $Q$ have the same $\zeta$-weight, then
for any $\zeta$-homogeneous potentials
$\Phi,\Psi\in\pi(\mm^2)$ and every
$\alpha\in\mAut_\ell(\mF)$ with $\alpha(\Phi)=\Psi$,
the map $\mm/\mm^2\to\mm/\mm^2$ induced by $\alpha$
preserves $\zeta$-weights, and hence so does the
map~\eqref{eq:indec-map}.
\end{remark}

\begin{prop}\label{prop:graded-invariants}
Suppose that $Q$ has one vertex.
Let $\Phi,\Psi\in\pi(\mm^2)$ be $\zeta$-homogeneous
potentials such that the abelianizations of
$\Lambda(\Phi)$ and $\Lambda(\Psi)$ are finite dimensional.
Then, for every $\alpha\in\mAut_\ell(\mF)$ satisfying
$\alpha(\Phi)=\Psi$, the induced map~\eqref{eq:indec-map}
preserves $\zeta$-weights.
\end{prop}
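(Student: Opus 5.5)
The plan is to reduce the statement to a commutative one through abelianization, and then use the classical rigidity of weights for quasi-homogeneous isolated hypersurface singularities.

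\emph{Step 1: reduction to the commutative case.} Since $Q$ has one vertex, every arrow is a loop. Abelianization $\mF\to R=\Bbbk[[x_1,\ldots,x_n]]$ sends a cyclic class to a well-defined power series $f_\Phi$, and it sends each cyclic derivative $\partial_a\Phi$ to the ordinary partial derivative $\partial_{x_a}f_\Phi$. It follows that
\[
\Lambda(\Phi)^{\mathrm{ab}}\cong R/(\partial_{x_a}f_\Phi)=\mathcal M(f_\Phi).
\]
So the hypothesis says that $f_\Phi$ and $f_\Psi$ are $\zeta$-homogeneous of weight $d_\zeta(\Phi)$ and $d_\zeta(\Psi)$, with finite Milnor number. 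The automorphism $\alpha$ induces an automorphism $\alpha^{\mathrm{ab}}$ of $R$ with $\alpha^{\mathrm{ab}}(f_\Phi)=f_\Psi$. The abelianization map is the identity on $\mm/\mm^2=V$. The linear parts of the $\partial_a\Phi$ abelianize to the linear parts of the $\partial_{x_a}f_\Phi$. Hence $K_\Phi\subseteq V$, as in the proof of Theorem~\ref{thm:graded-linear-part}, equals the image of $J_{f_\Phi}$ in $\mathfrak m_R/\mathfrak m_R^2$, and \eqref{eq:indec-map} is the map induced by $\alpha^{\mathrm{ab}}$ on $\mathfrak m_R/(\mathfrak m_R^2+J_{f})$. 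Thus it suffices to prove the commutative statement: a right equivalence between $\zeta$-homogeneous isolated singularities induces a weight-preserving map on these spaces of indecomposables.

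\emph{Step 2: weights agree.} By Remark~\ref{rem:commutative-weighted}, the Milnor number is a right-equivalence invariant, and the Milnor--Orlik product is strictly increasing in $M$. Hence $M:=d_\zeta(\Phi)=d_\zeta(\Psi)$. (If $f_\Phi=0$, then $\mathcal M=R$ is finite dimensional only when $n=0$, which is trivial.)

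\emph{Step 3: splitting.} By the commutative, weighted analogue of Lemma~\ref{lem:quadratic-splitting}, carried out weight by weight so that homogeneity is kept, write $f=q(y)+h(z)$. Here $q$ is a nondegenerate quadratic form in variables $y$, and $h\in\mathfrak m^3$ is a $\zeta$-homogeneous isolated singularity in the remaining variables $z$. The space of indecomposables $\mathfrak m/(\mathfrak m^2+J_f)$ is then identified, as a graded space, with the span of the $z$-coordinates. The map induced by $\alpha^{\mathrm{ab}}$ becomes the linear part of a right equivalence $h_\Phi\sim h_\Psi$ between reduced quasi-homogeneous germs of order at least three.

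\emph{Step 4: the main obstacle.} I must show that this linear part $L$ preserves weights. I would first use a torus-deformation argument. Let $\lambda_t$ be the $\mathbb G_m$-action $z_i\mapsto t^{\zeta_i}z_i$, and put $\varphi_t=\lambda_t^{-1}\varphi\lambda_t$. Since both germs are homogeneous of weight $M$, we have $\varphi_t(h_\Phi)=h_\Psi$ for all $t$. The obstruction consists of the components of $L$ that lower weight. Let $w$ be the smallest weight shift occurring in $\varphi$, and suppose $w<0$. Then $t^{-w}\varphi_t$ converges as $t\to0$ to a nonzero derivation-like leading term $D$, lowering weight by $-w$, with $D(h_\Phi)=0$ in the appropriate graded sense. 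More precisely, I would compare the components of weight $M+w$: the weight-shift-$w$ part $\varphi_{(w)}$ of $\varphi$ satisfies
\[
\sum_i \varphi_{(w)}(z_i)\,\partial_{z_i}h_\Phi\in(\text{terms forced to vanish}).
\]
I would then combine this with Saito's theorem, which gives $h\in J_h$ and says that the Euler field spans the weight-zero part of the annihilator modulo trivial fields, and with the fact that, for order at least three, every weight-negative vector field annihilating $h$ modulo $\mathfrak m J_h$ has zero linear part (this comes from $J_h\subseteq\mathfrak m^2$ and the finiteness of $\mathcal M(h)$). From this I conclude that $L$ has no weight-lowering component. Applying the same argument to $\varphi^{-1}$ rules out weight-raising components. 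Hence $L$ preserves $\zeta$-weights. This last step is where I expect the real difficulty: controlling the weight-lowering part of the linear term via the finite-dimensionality of the Milnor algebra.
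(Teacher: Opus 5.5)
Your Steps 1 and 2 are sound and agree with the paper. Abelianization identifies \eqref{eq:indec-map} with the map induced on $\mathfrak m_R/(\mathfrak m_R^2+J_f)$, and Milnor--Orlik monotonicity gives a common weight $M$. The gap is Step 4, which carries the whole content of the proposition, and the mechanism you sketch there does not work.

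\begin{enumerate}
\item The ``fact'' you plan to invoke is false. If $\zeta_j<\zeta_i$, the linear field $z_j\partial_{z_i}$ has negative weight and nonzero linear part, yet $z_j\partial_{z_i}h\in\mathfrak m J_h$. Indeed, every linear vector field annihilates $h$ modulo $\mathfrak mJ_h$. What is true is that a vector field $D$ with $D(h)=0$ exactly has coefficients in $J_h$, by the Koszul syzygies of the regular sequence of partials.
\item The minimal-shift part $\varphi_{(w)}$ of an automorphism is not a derivation, so comparing weight components yields no relation of the form $\sum_i\varphi_{(w)}(z_i)\partial_{z_i}h\in(\cdots)$. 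Since $h$ is not homogeneous in ordinary degree, for $w<0$ the lowest possible weight component of $\varphi(h)$ has weight $M+kw$, with $k$ the top ordinary degree of $h$. That component is the degree-$k$ part of $h$ evaluated at $\varphi_{(w)}(z)$, which is a nonlinear expression.
\item Passing to $\varphi^{-1}$ cannot rule out weight-raising entries of $L$. If $L$ preserves the filtration by $\operatorname{span}\{z_i:\zeta_i\geq n\}$, so does $L^{-1}$, so the argument applied to $\varphi^{-1}$ returns the same information. A symmetric ``largest shift'' argument is also unavailable, because the higher-order terms of $\varphi$ have unbounded weight.
\end{enumerate}

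The missing idea is to use the infinitesimal version of your torus family rather than its limit. With $E_\zeta$ the Euler derivation, set $\delta=\alpha^{-1}E_\zeta\alpha-E_\zeta$. Up to sign, this is the derivative at $t=1$ of $\alpha^{-1}\lambda_t^{-1}\alpha\lambda_t$, which fixes $\Phi$. Directly, $\delta(\Phi)=\alpha^{-1}(M\Psi)-M\Phi=0$, because $d_\zeta(\Phi)=d_\zeta(\Psi)=M$. Abelianizing gives $\sum_i h_i\partial_{x_i}f=0$, so $h_i\in J_f$ by the Koszul relations. Since $\ker\rho\subseteq\mm^2$, this gives $\delta(a_i)\in\mJ(\Phi)+\mm^2$. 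The identity $E_\zeta\alpha-\alpha E_\zeta=\alpha\delta$ then shows that \eqref{eq:indec-map} intertwines the induced Euler operators, so it preserves $\zeta$-weights in characteristic zero. This handles raising and lowering at once, and needs neither Saito's theorem nor order $\geq 3$.

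Because this argument works modulo $\mm^2+\mJ(\Phi)$ directly, your Step 3 is unnecessary. As written, Step 3 would also need a proof that the map on indecomposables is the linear part of some right equivalence $h_\Phi\sim h_\Psi$. The uniqueness part of the splitting lemma does not supply this by itself.
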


\begin{proof}
We may assume $Q_1\ne\varnothing$.
Fix $\alpha\in\mAut_\ell(\mF)$ with $\alpha(\Phi)=\Psi$,
and put $A=\Lambda(\Phi)$ and $B=\Lambda(\Psi)$.
Let $a_1,\ldots,a_n$ be the loops.
Put $R=\Bbbk[[x_1,\ldots,x_n]]$, and let
$\rho:\mF\to R$ be the abelianization map, with
$\rho(a_i)=x_i$. Let $f,g\in R$ be the
commutativizations of $\Phi,\Psi$, respectively.
Their Milnor algebras $\mathcal M(f)$ and $\mathcal M(g)$
(see Remark~\ref{rem:commutative-weighted}) are the
abelianizations of $A$ and $B$, respectively, and hence
are finite dimensional. The formal coordinate
change of $R$ induced by $\alpha$ takes $f$ to $g$.
Thus, as discussed in Remark~\ref{rem:commutative-weighted},
$f$ and $g$ have the same weight, and hence so do $\Phi$ and
$\Psi$; denote their common weight by $M$.

Let $E_\zeta$ be the $\zeta$-Euler derivation of $\mF$,
defined by $E_\zeta(a_i)=\zeta_{a_i}a_i$, and put
\[
 \delta=\alpha^{-1}E_\zeta\alpha-E_\zeta.
\]
Since $\Phi$ and $\Psi$ have the same $\zeta$-weight $M$,
we have $\delta(\Phi)=0$ in $\operatorname{Tr}(\mF)$.
Under abelianization, $\delta$ induces a derivation
$D=\sum_i h_i\partial_{x_i}$ of $R$ with
\[
 \sum_i h_i\partial_{x_i}f=0.
\]
Because $\mathcal M(f)$ is finite dimensional, the partial derivatives
of $f$ form a regular sequence in $R$. The Koszul relations
generate their syzygies
\cite[Appendix~B, Sections~B.6--B.8]{GLS25}, so
$h_i\in (\partial_{x_1}f,\ldots,\partial_{x_n}f)$ for every $i$.
As $\ker\rho\subseteq\mm^2$ and
$\rho(\mJ(\Phi))=(\partial_{x_1}f,\ldots,\partial_{x_n}f)$,
we obtain
\[
 \delta(a_i)\in\mJ(\Phi)+\mm^2
 \qquad(1\leq i\leq n).
\]
Together with the identity
\[
 E_\zeta(\alpha(u))-\alpha(E_\zeta(u))
 =\alpha(\delta(u)),
\]
this shows that the map
$\mm/(\mm^2+\mJ(\Phi))\to\mm/(\mm^2+\mJ(\Psi))$
induced by $\alpha$ commutes with the induced
$\zeta$-Euler operators and hence preserves
$\zeta$-weights, since $\operatorname{char}\Bbbk=0$.
\end{proof}

For quiver potentials, Theorem~\ref{thm:graded-linear-part} and
Proposition~\ref{prop:graded-invariants}, together with
Remark~\ref{rem:parallel-arrows-equal-weights}, give invariance
of the $\zeta$-weight and of both Hilbert series under two natural
hypotheses. Example~\ref{ex:two-weighted-degrees} shows that the
$\zeta$-weight and Jacobian Hilbert series may change for arbitrary
arrow weights. An example in Appendix~\ref{app:trace-counterexample}
shows that the trace Hilbert series and its degree may change
even between right-equivalent Jacobi-finite
$\zeta$-homogeneous potentials of the same $\zeta$-weight.

\begin{example}\label{ex:two-weighted-degrees}\normalfont
Let $Q$ have vertices $1,2,3$ and arrows
$a,a':1\to2$, $b:2\to3$, $c:3\to1$, with
$\zeta_a=\zeta_b=\zeta_c=1$ and $\zeta_{a'}=2$.
Set
\[
 \Phi_1=\pi(cba),\qquad \Phi_2=\pi(cba').
\]
Both are $\zeta$-homogeneous, with $d_\zeta(\Phi_1)=3$ and
$d_\zeta(\Phi_2)=4$.
Swapping $a,a'$ and fixing $b,c$ defines a right equivalence
$\alpha\in\mAut_\ell(\mF)$ with $\alpha(\Phi_1)=\Phi_2$.
The Jacobian algebras are
\[
 \Lambda(\Phi_1)=\Bbbk\langle\langle Q\rangle\rangle/\overline{(cb,ac,ba)},\qquad
 \Lambda(\Phi_2)=\Bbbk\langle\langle Q\rangle\rangle/\overline{(cb,a'c,ba')}.
\]
It is easy to see that they are both Jacobi-finite, with
\[
H_{\zeta,\Phi_1}(t)=3+3t+t^2+2t^3+t^4,\qquad
H_{\zeta,\Phi_2}(t)=3+3t+3t^2+t^3.
\]
So $\Phi_1$ and $\Phi_2$ have different $\zeta$-weights and Jacobian
Hilbert series. However, both are readily seen to be rigid, hence
$T_{\zeta,\Phi_1}(t)=T_{\zeta,\Phi_2}(t)=3$ and
$a_\zeta^{\mathrm{tr}}(\Phi_1)=a_\zeta^{\mathrm{tr}}(\Phi_2)=0$.
\end{example}

For any $\zeta$-homogeneous potential $\Phi$, define
\[
 \mathcal D_\zeta(\Phi)=
 \{d_\zeta(\Psi):\Psi\sim_{\mathrm r}\Phi
       \text{ and }\Psi\text{ is }\zeta\text{-homogeneous}\},
\]
where $\sim_{\mathrm r}$ denotes right equivalence. If $\Phi$ is trace-finite, define also
\[
 \mathcal A_\zeta^{\mathrm{tr}}(\Phi)=
 \{a_\zeta^{\mathrm{tr}}(\Psi):\Psi\sim_{\mathrm r}\Phi
       \text{ and }\Psi\text{ is }\zeta\text{-homogeneous}\}.
\]
These sets, whenever defined, depend only on the right-equivalence
class of $\Phi$.

\begin{prop}\label{prop:graded-value-sets}
For every $\zeta$-homogeneous potential $\Phi$, the set
$\mathcal D_\zeta(\Phi)$ is finite. If $\Phi$ is trace-finite,
then $\mathcal A_\zeta^{\mathrm{tr}}(\Phi)$ is also finite.
\end{prop}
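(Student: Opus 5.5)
The plan is to show that right equivalence preserves enough \emph{ordinary} (path-length) data, and then to convert that data into $\zeta$-weight bounds using $\underline\zeta|u|\leq d_\zeta(u)\leq\bar\zeta|u|$. The basic fact is that every $\alpha\in\mAut_\ell(\mF)$ satisfies $\alpha(\mm^n)=\mm^n$ for all $n$: it maps $\mm^n$ into itself by Lemma~\ref{lem:inverse}, and so does $\alpha^{-1}$. Hence $\alpha$ preserves each $\pi(\mm^n)$ in both directions, and therefore preserves the ordinary order $\operatorname{ord}$ of potentials.

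\emph{Finiteness of $\mathcal D_\zeta(\Phi)$.} If $\Phi=0$, then every $\Psi\sim_{\mathrm r}\Phi$ is $0$, so $\mathcal D_\zeta(\Phi)=\{-\infty\}$. Otherwise put $r=\operatorname{ord}(\Phi)$. Let $\Psi\sim_{\mathrm r}\Phi$ be $\zeta$-homogeneous of weight $M$. Then $\operatorname{ord}(\Psi)=r$, so $\Psi$ has a nonzero cyclic term of length $r$. That term has weight $M$, which gives $\underline\zeta r\leq M\leq\bar\zeta r$. Thus $\mathcal D_\zeta(\Phi)\subseteq[\underline\zeta r,\bar\zeta r]$ is finite.

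\emph{Finiteness of $\mathcal A_\zeta^{\mathrm{tr}}(\Phi)$.} Let $\Phi$ be trace-finite. By Proposition~\ref{prop:trace-criteria} (or directly from Lemma~\ref{lem:jacobi-trace} and Proposition~\ref{prop:pc-open}), there is $s\geq1$ with $\pi(\mm^s)\subseteq\pi(\mJ(\Phi))$. Now take $\Psi=\alpha(\Phi)$. Lemma~\ref{lem:jac-invariance} gives $\alpha(\mJ(\Phi))=\mJ(\Psi)$, and $\alpha$ acts on $\operatorname{Tr}(\mF)$ preserving $\pi(\mm^s)$. Applying $\alpha$ to the inclusion therefore yields
\[
 \pi(\mm^s)=\alpha\cdot\pi(\mm^s)\subseteq\alpha\cdot\pi(\mJ(\Phi))=\pi(\mJ(\Psi)).
\]
The integer $s$ depends only on $\Phi$. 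In particular $\Psi$ is trace-finite, so $a_\zeta^{\mathrm{tr}}(\Psi)$ is defined.

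Finally, take a weight $d\geq\bar\zeta s$. Every path of $\zeta$-weight $d$ has length at least $d/\bar\zeta\geq s$, so $\pi(\mF_d)\subseteq\pi(\mm^s)\subseteq\pi(\mJ(\Psi))$. By Lemma~\ref{lem:jacobi-trace}, this means $\operatorname{Tr}(\Lambda(\Psi))_d=0$. Hence $0\leq a_\zeta^{\mathrm{tr}}(\Psi)<\bar\zeta s$ for every such $\Psi$, and $\mathcal A_\zeta^{\mathrm{tr}}(\Phi)$ is finite.

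The main point needing care is that the weight-$d$ part of $\operatorname{Tr}(\Lambda(\Psi))$ is exactly the image of $\pi(\mF_d)$ modulo $\pi(\mJ(\Psi))$. This holds because $\mJ(\Psi)$ is $\zeta$-homogeneous, so $\pi(\mJ(\Psi))$ decomposes into weight components.
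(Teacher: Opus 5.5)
Your proof is correct. The first part matches the paper's argument: right equivalence preserves the ordinary order $r$, so every $\zeta$-homogeneous $\Psi\sim_{\mathrm r}\Phi$ has the weight of some cycle of length $r$.

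For the second part you take a genuinely different route. The paper goes through finite determinacy. By Theorem~\ref{thm:trace-criterion}, $\Phi$ is $N$-determined, hence so is every $\Psi\sim_{\mathrm r}\Phi$. Proposition~\ref{prop:fd-necessary} applied to $\Psi$ then gives $\pi(\mm^{N+1})\subseteq\pi(\mm\mJ(\Psi))$, and so $a_\zeta^{\mathrm{tr}}(\Psi)\leq N\bar\zeta$. You instead transport the single inclusion $\pi(\mm^s)\subseteq\pi(\mJ(\Phi))$ along $\alpha$, using only $\alpha(\mm^s)=\mm^s$ and Lemma~\ref{lem:jac-invariance}.

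Your route is more elementary. It bypasses both the orbit-geometry argument behind necessity and the elimination argument behind sufficiency, needing only Lemma~\ref{lem:cycimage} and Proposition~\ref{prop:pc-open} to get $s$. It also shows directly that each $\Psi$ is trace-finite, so that $a_\zeta^{\mathrm{tr}}(\Psi)$ is defined.

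The paper's route instead expresses the uniform bound through the determinacy order $N$. This ties $\mathcal A_\zeta^{\mathrm{tr}}(\Phi)$ to the determinacy theme of Section~\ref{sec:weighted}, using the same mechanism as the lower bound in Corollary~\ref{cor:homogeneous-exact}.
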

\begin{proof}
We may assume $Q_1\neq\varnothing$ and $\Phi\neq0$.
Right equivalence preserves the ordinary order
$p=\operatorname{ord}(\Phi)$. Every $\zeta$-homogeneous
$\Psi\sim_{\mathrm r}\Phi$ has a nonzero component of ordinary
length $p$, so its $\zeta$-weight is the weight of some cyclic
path of length $p$. Thus $\mathcal D_\zeta(\Phi)$ is finite.

Suppose now that $\Phi$ is trace-finite. By
Theorem~\ref{thm:trace-criterion}, $\Phi$ is $N$-determined
for some $N\geq1$, and every right-equivalent potential $\Psi$
is also $N$-determined. Proposition~\ref{prop:fd-necessary} gives
$\pi(\mm^{N+1})\subseteq\pi(\mm\mJ(\Psi))$.
Every cyclic path of $\zeta$-weight greater than
$N\bar\zeta$ has ordinary length greater than $N$, where
$\bar\zeta=\max_{a\in Q_1}\zeta_a$, so
$a_\zeta^{\mathrm{tr}}(\Psi)\leq N\bar\zeta$.
Thus $\mathcal A_\zeta^{\mathrm{tr}}(\Phi)$ is a finite set.
\end{proof}

\begin{remark}\label{rem:weighted-degree-extrema}
In the commutative isolated-singularity setting of
Remark~\ref{rem:commutative-weighted}, the corresponding sets
are singletons. Example~\ref{ex:two-weighted-degrees} and the counterexample
in Appendix~\ref{app:trace-counterexample} show that this
need not hold for quiver potentials. For trace-finite $\Phi$, it would be
interesting to understand the relationship between
$\mathcal D_\zeta(\Phi)$ and $\mathcal A_\zeta^{\mathrm{tr}}(\Phi)$.
In particular, their respective minima and maxima are
right-equivalence invariants, and one may ask how these four
invariants are related.
\end{remark}

\appendix
\section{A counterexample to graded trace invariance}\label{app:trace-counterexample}

The aim of this appendix is to construct two right-equivalent,
Jacobi-finite $\zeta$-homogeneous potentials of the same
$\zeta$-weight whose trace Hilbert series differ and have
different degrees.

\begin{searchstrategy}
Our strategy is to first find a systematic
way to construct right-equivalent $\zeta$-homogeneous potentials
for which the induced map~\eqref{eq:indec-map} fails to preserve
$\zeta$-weights, and then to search among such pairs for examples
with different trace Hilbert series.
Remark~\ref{rem:parallel-arrows-equal-weights} suggests
starting with a quiver without parallel arrows, splitting
selected arrows into parallel copies, and assigning different
positive weights to those copies.
Proposition~\ref{prop:graded-invariants} indicates that loops do not help produce the desired failure of invariance, so the starting quiver should be loop-free. We also seek a sufficiently symmetric quiver carrying a
one-parameter family of potentials that are generically Jacobi-finite
and admit a convenient matrix expression. By splitting suitable arrows, we can lift this
expression to a block matrix in which the scalar parameter is
replaced by a diagonal matrix, yielding a multi-parameter family
of potentials on the split quiver.
\end{searchstrategy}

The quiver $Q^{(2)}$ from the work of Geiss, Labardini-Fragoso, and Schröer~\cite[Section~9.9]{GLS16} provides such a starting point: it is loop-free, has no parallel arrows, is sufficiently symmetric, and carries the desired one-parameter family of potentials $W_t^{(2)}$. It is obtained by
mutation from the adjacency quiver of the usual tetrahedral
triangulation of the sphere with four punctures. We draw it in the arrangement used in
\cite[Section~9.9]{GLS16}; the arrow names have been changed to avoid
conflicts with notation in the main text. As in \cite[Section~9.9]{GLS16},
the quiver is drawn by cutting the cylinder open, so the two displayed
copies of vertex~$1$ are identified.
\begin{center}
\begin{tikzpicture}[
 >=Stealth,
 every node/.style={font=\small},
 edge/.style={->,line width=.48pt},
 scale=.90,transform shape
]
\draw[densely dashed] (-4.95,.95) -- (-4.95,-.95);
\draw[densely dashed] (4.95,.95) -- (4.95,-.95);
\node (v1l) at (-4.45,0) {$1$};
\node (v2)  at (0,0) {$2$};
\node (v1r) at (4.45,0) {$1$};
\node (v3)  at (-2.15,1.85) {$3$};
\node (v4)  at (-2.15,-1.85) {$4$};
\node (v5)  at (2.15,1.85) {$5$};
\node (v6)  at (2.15,-1.85) {$6$};
\draw[edge] (v2) -- node[pos=.53,sloped,fill=white,inner sep=.8pt] {$a_+$} (v3);
\draw[edge] (v3) -- node[pos=.51,sloped,fill=white,inner sep=.8pt] {$p_+$} (v1l);
\draw[edge] (v2) -- node[pos=.53,sloped,fill=white,inner sep=.8pt] {$a_-$} (v4);
\draw[edge] (v4) -- node[pos=.51,sloped,fill=white,inner sep=.8pt] {$p_-$} (v1l);
\draw[edge] (v1r) -- node[pos=.51,sloped,fill=white,inner sep=.8pt] {$b_+$} (v5);
\draw[edge] (v5) -- node[pos=.53,sloped,fill=white,inner sep=.8pt] {$q_+$} (v2);
\draw[edge] (v1r) -- node[pos=.51,sloped,fill=white,inner sep=.8pt] {$b_-$} (v6);
\draw[edge] (v6) -- node[pos=.53,sloped,fill=white,inner sep=.8pt] {$q_-$} (v2);
\end{tikzpicture}
\end{center}
Put
\[
 u^\pm=p_\pm a^\pm,\qquad v^\pm=q_\pm b^\pm.
\]
\begin{samepage}
For $t\in\Bbbk$, the potential $W_t^{(2)}$ on $Q^{(2)}$ from \cite[Section~9.9]{GLS16} can be written as
\[
 W_t^{(2)}=\pi\bigl((u^*)^{\mathsf T}C_tv^*\bigr)
 =\pi\bigl(tu^+v^++u^+v^-+u^-v^++u^-v^-\bigr),
\]
where
\[
 u^*=\begin{pmatrix}u^+\\u^-\end{pmatrix},\qquad
 v^*=\begin{pmatrix}v^+\\v^-\end{pmatrix},\qquad
 C_t=\begin{pmatrix}t&1\\[2pt]1&1\end{pmatrix}.
\]
It is known from \cite[Lemma~9.20(a)]{GLS16} that $W_t^{(2)}$ is
Jacobi-finite for $t\ne0,1$.
\end{samepage}

We now split the arrows $a^+,b^+,a^-,b^-$ into three parallel copies,
while leaving $p_\pm,q_\pm$ unchanged. The resulting split quiver is
drawn below in the same cylindrical layout.
\begin{center}
\begin{tikzpicture}[
 >=Stealth,
 every node/.style={font=\small},
 edge/.style={->,line width=.45pt},
 scale=.94,transform shape
]
\draw[densely dashed] (-6.15,1.05) -- (-6.15,-1.05);
\draw[densely dashed] (6.15,1.05) -- (6.15,-1.05);
\node (v1l) at (-5.55,0) {$1$};
\node (v2)  at (0,0) {$2$};
\node (v1r) at (5.55,0) {$1$};
\node (v3)  at (-3.72,1.86) {$3$};
\node (v4)  at (-3.72,-1.86) {$4$};
\node (v5)  at (2.42,1.86) {$5$};
\node (v6)  at (2.42,-1.86) {$6$};
\draw[edge] (v3) -- node[pos=.52,sloped,fill=white,inner sep=.8pt] {$p_+$} (v1l);
\draw[edge] (v4) -- node[pos=.52,sloped,fill=white,inner sep=.8pt] {$p_-$} (v1l);
\draw[edge] (v5) -- node[pos=.52,sloped,fill=white,inner sep=.8pt] {$q_+$} (v2);
\draw[edge] (v6) -- node[pos=.52,sloped,fill=white,inner sep=.8pt] {$q_-$} (v2);
\draw[edge] (v2) to[bend left=18] node[pos=.53,sloped,fill=white,inner sep=.6pt] {$a^+_1$} (v3);
\draw[edge] (v2) -- node[pos=.55,sloped,fill=white,inner sep=.6pt] {$a^+_2$} (v3);
\draw[edge] (v2) to[bend right=18] node[pos=.57,sloped,fill=white,inner sep=.6pt] {$a^+_3$} (v3);
\draw[edge] (v2) to[bend right=18] node[pos=.57,sloped,fill=white,inner sep=.6pt] {$a^-_1$} (v4);
\draw[edge] (v2) -- node[pos=.55,sloped,fill=white,inner sep=.6pt] {$a^-_2$} (v4);
\draw[edge] (v2) to[bend left=18] node[pos=.53,sloped,fill=white,inner sep=.6pt] {$a^-_3$} (v4);
\draw[edge] (v1r) to[bend left=18] node[pos=.43,sloped,fill=white,inner sep=.6pt] {$b^+_1$} (v5);
\draw[edge] (v1r) -- node[pos=.46,sloped,fill=white,inner sep=.6pt] {$b^+_2$} (v5);
\draw[edge] (v1r) to[bend right=18] node[pos=.49,sloped,fill=white,inner sep=.6pt] {$b^+_3$} (v5);
\draw[edge] (v1r) to[bend right=18] node[pos=.49,sloped,fill=white,inner sep=.6pt] {$b^-_1$} (v6);
\draw[edge] (v1r) -- node[pos=.46,sloped,fill=white,inner sep=.6pt] {$b^-_2$} (v6);
\draw[edge] (v1r) to[bend left=18] node[pos=.43,sloped,fill=white,inner sep=.6pt] {$b^-_3$} (v6);
\end{tikzpicture}
\end{center}
For $j=1,2,3$, set
\[
 u_j^\pm=p_\pm a_j^\pm,\qquad
 v_j^\pm=q_\pm b_j^\pm,
\]
and form the four column vectors
\[
\mathbf u^\pm=(u_1^\pm,u_2^\pm,u_3^\pm)^{\mathsf T},
\qquad
\mathbf v^\pm=(v_1^\pm,v_2^\pm,v_3^\pm)^{\mathsf T}.
\]
For $\mathbf t=(t_1,t_2,t_3)\in\Bbbk^3$, define the potential $W(\mathbf t)$ on the split quiver by
\[
%\small
W(\mathbf t)=\pi\bigl((\mathbf u^*)^{\mathsf T}\mathbf C_{\mathbf t}\mathbf v^*\bigr)
=\sum_{j=1}^3\pi\bigl(t_j u_j^+v_j^++u_j^+v_j^-+u_j^-v_j^++u_j^-v_j^-\bigr),
\]
where
\[
%\small
\mathbf u^*=\binom{\mathbf u^+}{\mathbf u^-},\quad
\mathbf v^*=\binom{\mathbf v^+}{\mathbf v^-},\quad
\mathbf C_{\mathbf t}=\begin{pmatrix}D_{\mathbf t}&I_3\\ I_3&I_3\end{pmatrix},\quad
D_{\mathbf t}=\operatorname{diag}(t_1,t_2,t_3).
\]

Give $p_\pm,q_\pm$ weight $1$, and assign the remaining
arrow weights by
\[
\begin{array}{c|ccc}
 &j=1&j=2&j=3\\ \hline
 \zeta(a_j^+)=\zeta(a_j^-)&2&2&1\\
 \zeta(b_j^+)=\zeta(b_j^-)&1&1&2.
\end{array}
\]
The paths $u_j^\pm=p_\pm a_j^\pm$ and
$v_j^\pm=q_\pm b_j^\pm$ have weights
\[
\begin{aligned}
 (\zeta(u_1^+),\zeta(u_2^+),\zeta(u_3^+))
 &=(\zeta(u_1^-),\zeta(u_2^-),\zeta(u_3^-))=(3,3,2),\\
 (\zeta(v_1^+),\zeta(v_2^+),\zeta(v_3^+))
 &=(\zeta(v_1^-),\zeta(v_2^-),\zeta(v_3^-))=(2,2,3).
\end{aligned}
\]

\begin{lemma}\label{lem:appendix-trace-formula}
The potential $W=W(\mathbf t)$ is $\zeta$-homogeneous of
weight $5$ for every $\mathbf t=(t_1,t_2,t_3)\in\Bbbk^3$.
If $t_1,t_2,t_3\notin\{0,1\}$, then
$W$ is Jacobi-finite, and
\[
\begin{aligned}
 H_{\zeta,W}(t)
 &=6+10t+24t^2+40t^3+66t^4+94t^5
   +96t^6+72t^7+24t^8,\\
 T_{\zeta,W}(t)
 &=6+\sum_{\substack{1\le j,k\le3\\t_j=t_k}}
 t^{\gamma_j^++\gamma_k^-},\qquad \gamma^+=(2,2,3),\quad \gamma^-=(3,3,2).
\end{aligned}
\]
\end{lemma}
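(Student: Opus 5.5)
The plan has three parts: homogeneity, the Jacobian algebra, and the trace. Homogeneity is a direct check. Every cyclic term of $W(\mathbf t)$ has the form $\pi(u_j^\sigma v_j^\tau)$ with $\sigma,\tau\in\{\pm\}$. Since $\zeta(u_j^\sigma)+\zeta(v_j^\tau)=3+2$ or $2+3$, each term has $\zeta$-weight $5$.

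For Jacobi-finiteness and $H_{\zeta,W}$, I would write out the cyclic derivatives explicitly:
\[
\partial_{a_j^\pm}W,\quad \partial_{b_j^\pm}W,\quad \partial_{p_\pm}W=\sum_j a_j^\pm(\cdots),\quad \partial_{q_\pm}W=\sum_j b_j^\pm(\cdots).
\]
The relations from $a_j^\pm$ and $b_j^\pm$ are the $2\times2$ systems $C_{t_j}\,(\ldots)=0$ and $C_{t_j}^{\mathsf T}(\ldots)=0$. Because $t_j\ne0,1$, each $C_{t_j}$ is invertible. Hence these relations say that the four paths $q_\tau b_j^\tau p_\sigma$ (for fixed $j$) vanish, and dually that the paths $p_\sigma a_j^\sigma q_\tau$ vanish. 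This is the split analogue of \cite[Lemma~9.20(a)]{GLS16}. I would then run a Gr\"obner--Shirshov basis computation for the closed ideal $\mJ(W)$, as in \cite[Section~2]{IL21}, using a deg-lex order adapted to the vertex layout. From it I would read off a finite monomial basis, verify that its weight counts are $6,10,24,\dots,24$, and confirm that all paths of weight $\geq 9$ vanish. I expect this step to be the main obstacle: the reductions coming from $\partial_{p_\pm}W$ and $\partial_{q_\pm}W$ mix the three copies $j=1,2,3$, and the resolution of ambiguities must be done carefully, probably with machine assistance. The computation is independent of the particular values $t_j\notin\{0,1\}$ only after the normalization by $C_{t_j}^{-1}$, and I would check this explicitly.

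For $T_{\zeta,W}$, I would use Lemma~\ref{lem:jacobi-trace}, which gives $\operatorname{Tr}(\Lambda(W))\cong\ell\oplus\pi(\mm)/\pi(\mJ(W))$; the summand $\ell$ contributes the constant $6$. Every oriented cycle of $Q$ passes $1\to\{5,6\}\to2\to\{3,4\}\to1$, so it is a cyclic word in the $u_k^\sigma$ and $v_j^\tau$ of length a multiple of $4$. A cycle of path length $\geq8$ has weight $\geq10>8$, so by the Hilbert series it already vanishes in $\Lambda(W)$. Only the $36$ classes $\pi(u_k^\sigma v_j^\tau)$ remain, and by Lemma~\ref{lem:cycimage} I must compute their span modulo $\Phi_\#(\mDer_\ell(\mF))$ restricted to path length $4$. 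That restricted image is spanned by $\pi(x'\,\partial_xW)$ with $x,x'$ parallel arrows; derivations sending an arrow into $\mm^2$ produce only longer cycles. Concretely, the derivation $a_j^\sigma\mapsto a_k^\sigma$ gives $\sum_\tau (C_{t_j})_{\sigma\tau}\pi(u_k^\sigma v_j^\tau)$, the derivation $b_j^\tau\mapsto b_k^\tau$ gives the transposed relations with $C_{t_j}$, and $p_\pm,q_\pm$ give only scalings that are consequences of these.

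Thus, for each ordered pair $(k,j)$, the quotient of the $4$-dimensional space spanned by $u_k^\sigma v_j^\tau$ is dual to $\{X\in M_2(\Bbbk):X C_{t_j}^{\mathsf T}=C_{t_k}^{\mathsf T}X'\ \text{type conditions}\}$. I expect this to reduce to the intertwiner space $\{X: C_{t_k}X=XC_{t_j}\}$ modulo the invertible normalizations. That space is $1$-dimensional exactly when $t_j=t_k$ and is $0$ otherwise; I would verify this by the explicit $2\times2$ computation using $t_j\ne0,1$. The surviving class has weight $\zeta(v_j)+\zeta(u_k)=\gamma^+_j+\gamma^-_k$, which yields the stated formula for $T_{\zeta,W}$.
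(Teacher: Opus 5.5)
Your outline is the paper's: a Gr\"obner basis computation for $\Lambda(W)$, then reducing the trace to the length-$4$ cyclic classes and computing, block by block, the image of the length-preserving derivations. The length-$4$ relations you list for $a_j^\sigma\mapsto a_k^\sigma$ and $b_j^\tau\mapsto b_k^\tau$ are also correct. But several steps fail as written.

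The cubic relations $\partial_{a_j^+}W=(t_jv_j^++v_j^-)p_+$ and $\partial_{a_j^-}W=(v_j^++v_j^-)p_-$ end in different arrows: $p_+$ starts at vertex $3$ and $p_-$ at vertex $4$. So they are not a $2\times2$ system $C_{t_j}(\cdot)=0$ in common unknowns. They impose only $6$ relations on the $12$ paths $v_j^\tau p_\sigma$; for instance, the length-$3$ part of $e_2\Lambda(W)e_3$ is $3$-dimensional. Nothing forces $v_j^\tau p_\sigma=0$. In the paper's Gr\"obner basis the $96$ paths of length $3$ are cut down only by the $16$ cubic derivatives, leaving $80$ standard paths.

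So your picture of $\Lambda(W)$ is wrong. The hypothesis $t_j\notin\{0,1\}$ does not act through invertibility of $C_{t_j}$, which needs only $t_j\ne1$. It enters through the Gr\"obner basis itself, for instance via the coefficients $t_j-1$ of the quartic elements $\sum_j(t_j-1)a_j^sv_j^+p_{-s}$. Since $H_{\zeta,W}$ is exactly the output of that computation, deferring it (``probably with machine assistance'') leaves the first formula unproved. The paper supplies the order (with $b_3^+<b_2^+$), the $54$ relations, and the count of standard paths by length and weight.

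In the trace step, the claim that a cycle of path length $\ge8$ has weight $\ge10$ is false. Since $\zeta(u_3^\pm)=\zeta(v_1^\pm)=2$, the cycle $u_3^+v_1^+u_3^+v_1^+$ has weight $8$. Because $H_{\zeta,W}$ contains $24t^8$, the Hilbert series cannot dispose of it. (Length-$4$ cycles have weights $4,5,6$, not always $5$.) What you need is the path-length nilpotence $\mm^6\subseteq\mJ(W)$, which the paper reads off from the Gr\"obner basis: the relations are length-homogeneous and there are no standard paths of length $\ge6$.

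Finally, your proposed reduction to $\{X:C_{t_k}X=XC_{t_j}\}$ gives the wrong count. For $t_j=t_k$ this is the centralizer of the non-scalar matrix $C_{t_j}$, spanned by $I$ and $C_{t_j}$, hence $2$-dimensional rather than $1$-dimensional. Instead, do the rank computation on your own relations. Let $x,y,z,w$ be the classes of $u_k^+v_j^+$, $u_k^+v_j^-$, $u_k^-v_j^+$, $u_k^-v_j^-$. The relations are $t_jx+y$, $z+w$, $t_kx+z$, $y+w$, and $(t_jx+y)-(y+w)-(t_kx+z)+(z+w)=(t_j-t_k)x$. So the rank is $3$ if $t_j=t_k$ and $4$ otherwise, exactly as in the paper's $4\times4$ matrix; this step needs no hypothesis on the $t_j$.
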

\begin{proof}
Each summand of $W$ has $\zeta$-weight $5$.
Assume $t_j\notin\{0,1\}$ for $1\le j\le3$, and put
$\Lambda=\Lambda(W)$.
We compute the Jacobian algebra by the Gr\"obner basis theory
for quiver path algebras \cite{FFG93}.
Since the cyclic derivatives are homogeneous in ordinary path
length (as well as in $\zeta$-weight), the same computation applies
to the complete path algebra.

For $s\in\{+,-\}$, put $c_{j,+}=t_j$ and $c_{j,-}=1$.
The sixteen cyclic derivatives are
\[
\begin{array}{@{}l@{\quad}l@{\quad}l@{}}
\displaystyle\partial_{a_j^s}W=(c_{j,s}v_j^++v_j^-)p_s,
&
\displaystyle\partial_{b_j^s}W=(c_{j,s}u_j^++u_j^-)q_s,
& s\in\{+,-\},\quad 1\le j\le3,\\[3pt]
\displaystyle\partial_{p_s}W=\sum_{j=1}^3a_j^s(c_{j,s}v_j^++v_j^-),
&
\displaystyle\partial_{q_s}W=\sum_{j=1}^3b_j^s(c_{j,s}u_j^++u_j^-),
& s\in\{+,-\}.
\end{array}
\]
Choose the length-lexicographic order on paths induced by the following order on arrows:
\[
\begin{gathered}
p_+<p_-<q_+<q_-<a_1^+<a_2^+<a_3^+<a_1^-<a_2^-<a_3^-\\
{}<b_1^+<b_3^+<b_2^+<b_1^-<b_2^-<b_3^-.
\end{gathered}
\]
The nonstandard choice $b_3^+<b_2^+$ yields a relatively simple
Gr\"obner--Shirshov basis of the Jacobian ideal, consisting of the
sixteen cyclic derivatives above and the following thirty-eight
relations, grouped by ordinary length:
\[
\begin{aligned}
&\sum_{j=1}^3(t_j-1)a_j^s v_j^+p_{-s},&&
\sum_{j=1}^3(t_j-1)b_j^s u_j^+q_{-s},
\qquad s\in\{+,-\};
\end{aligned}
\]
\[
\begin{aligned}
u_i^+v_j^+p_s, &\qquad s\in\{+,-\},\quad 1\le i,j\le3,
\quad(s,i,j)\ne(-,3,3),\\
v_i^+u_j^+q_s, &\qquad s\in\{+,-\},\quad 1\le i,j\le3,
\quad(s,i,j)\ne(-,2,2).
\end{aligned}
\]
There are $16+4+34=54$ relations altogether. The standard paths
are precisely those avoiding their leading paths. Their number and $\zeta$-weight distribution are
\[
\begin{array}{c|r|l}
\text{length}&\text{number}&\text{$\zeta$-weight polynomial}\\ \hline
0&6&6\\
1&16&10t+6t^2\\
2&36&18t^2+18t^3\\
3&80&22t^3+42t^4+16t^5\\
4&102&24t^4+54t^5+24t^6\\
5&192&24t^5+72t^6+72t^7+24t^8\\
\ge6&0&0
\end{array}
\]
Since these relations are homogeneous in ordinary path length and every
path of length six contains one of their leading paths, there are no
standard paths of length at least six, and
\begin{equation}\label{eq:appendix-simple-nilpotence}
\mm^6\subseteq\mJ(W).
\end{equation}
Thus $W$ is Jacobi-finite. Summing the last column gives the stated
formula for $H_{\zeta,W}(t)$.

We next compute the trace Hilbert series. Let
$\mF_{[4]}\subseteq\mF$ be the subspace spanned by paths of
ordinary length four, and put $\mJ(W)_{[4]}=\mF_{[4]}\cap\mJ(W)$.
The quiver has no cycles of lengths $1$, $2$, $3$, or $5$.
By \eqref{eq:appendix-simple-nilpotence}, there are canonical isomorphisms as graded vector spaces
\[
\pi(\ell+\mF_{[4]})/\pi(\mJ(W)_{[4]})
\cong\pi(\mF)/\pi(\mJ(W))
\cong\operatorname{Tr}(\Lambda),
\]
with grading structures given by $\zeta$-weights.
The space $\pi(\mF_{[4]})$ has basis
\[
\pi(u_j^+v_k^+),\quad \pi(u_j^+v_k^-),\quad
\pi(u_j^-v_k^+),\quad \pi(u_j^-v_k^-),
\qquad 1\le j,k\le3.
\]
Denote these four basis elements by
$x_{jk},y_{jk},z_{jk},w_{jk}$, respectively.
Since $\mJ(W)$ is generated by the cubic cyclic derivatives,
$\mJ(W)_{[4]}$ is spanned by their products with arrows.
Taking cyclic classes, we find that $\pi(\mJ(W)_{[4]})$ is spanned by
the entries of
\[
\begin{pmatrix}
t_k&1&0&0\\
t_j&0&1&0\\
0&1&0&1\\
0&0&1&1
\end{pmatrix}
\begin{pmatrix}x_{jk}\\y_{jk}\\z_{jk}\\w_{jk}\end{pmatrix},
\qquad 1\le j,k\le3.
\]
Here the cyclic derivatives with respect to $p_s,q_s$ contribute only
linear combinations of these entries. The matrix has rank $3$
when $t_j=t_k$ and rank $4$ otherwise.
It follows that
\[
H_{\pi(\mF_{[4]})}(t)=4\sum_{1\le j,k\le3}t^{\gamma_j^-+\gamma_k^+},\qquad
H_{\pi(\mJ(W)_{[4]})}(t)=H_{\pi(\mF_{[4]})}(t)
-\sum_{\substack{1\le j,k\le3\\t_j=t_k}}t^{\gamma_j^-+\gamma_k^+}.
\]
Consequently,
$T_{\zeta,W}(t)=6+H_{\pi(\mF_{[4]})}(t)-H_{\pi(\mJ(W)_{[4]})}(t)$,
as required.
\end{proof}

We next record the right-equivalence criterion for this family.

\begin{lemma}\label{lem:appendix-right-equivalence}
For $\mathbf t=(t_1,t_2,t_3)$ and $\mathbf s=(s_1,s_2,s_3)$ in
$\Bbbk^3$, the potentials $W(\mathbf t)$ and $W(\mathbf s)$ are
right equivalent if and only if $\mathbf t$ is obtained from
$\mathbf s$ by a permutation of coordinates.
\end{lemma}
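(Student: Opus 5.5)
The easy direction is the permutation direction. If $\mathbf t=\sigma(\mathbf s)$ for a permutation $\sigma$ of $\{1,2,3\}$, let $\alpha$ fix $p_\pm,q_\pm$ and send
\[
a_j^\pm\mapsto a_{\sigma(j)}^\pm,\qquad b_j^\pm\mapsto b_{\sigma(j)}^\pm
\]
(with the appropriate direction of $\sigma$). This is an $\ell$-unital automorphism, since it permutes parallel arrows. It sends $u_j^\pm\mapsto u_{\sigma(j)}^\pm$ and $v_j^\pm\mapsto v_{\sigma(j)}^\pm$, so it carries the $j$-th block of $W(\mathbf s)$ to the $\sigma(j)$-th block with coefficient $s_j$. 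Hence $\alpha(W(\mathbf s))=W(\mathbf t)$. The $\zeta$-weights play no role, because right equivalence ignores them.

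For the converse, suppose $\alpha(W(\mathbf s))=W(\mathbf t)$. Both potentials are homogeneous of ordinary length $4$. The quiver has no cycles of length $<4$ or of length $5$. So the length-$4$ component of $\alpha(W(\mathbf s))$ is exactly the image of $W(\mathbf s)$ under the linear part $\alpha_1$ of $\alpha$: corrections involve at least one higher-order term and land in length $\geq5$. It therefore suffices to treat linear $\ell$-unital changes of arrows, that is, the block-diagonal group
\[
\prod_{\text{arrow spaces}}\mathrm{GL}.
\]
This group acts by scalars $\lambda_\pm,\mu_\pm$ on the one-dimensional spaces spanned by $p_\pm$ and $q_\pm$. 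It acts by matrices $A_\pm\in\mathrm{GL}_3$ on the spaces spanned by $a_1^\pm,a_2^\pm,a_3^\pm$, and by $B_\pm\in\mathrm{GL}_3$ on the spaces spanned by $b^\pm_j$. Then $\mathbf u^\pm\mapsto\lambda_\pm A_\pm\mathbf u^\pm$ (as column vectors of paths, after transposing conventions), and similarly for $\mathbf v^\pm$.

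Next I translate the potential into matrices. The cycles $\pi(u_j^\epsilon v_k^\delta)$ form a basis of $\pi(\mF_{[4]})$, so $W(\mathbf s)$ is determined by the four $3\times3$ blocks of $\mathbf C_{\mathbf s}$. The action transforms block $(\epsilon,\delta)$ as $X\mapsto \lambda_\epsilon\mu_\delta\,A_\epsilon^{\mathsf T}XB_\delta$. The equality of potentials then becomes four matrix equations:
\[
\lambda_+\mu_+A_+^{\mathsf T}D_{\mathbf s}B_+=D_{\mathbf t},\quad
\lambda_+\mu_-A_+^{\mathsf T}B_-=I,\quad
\lambda_-\mu_+A_-^{\mathsf T}B_+=I,\quad
\lambda_-\mu_-A_-^{\mathsf T}B_-=I.
\]
From the last three equations, $B_-=(\lambda_+\mu_-)^{-1}A_+^{-\mathsf T}$ and $A_-^{\mathsf T}=(\lambda_-\mu_-)^{-1}B_-^{-1}$, which gives $B_+=cA_+^{-\mathsf T}$ for an explicit nonzero scalar $c$. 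Substituting into the first equation yields
\[
D_{\mathbf t}=\kappa\,A_+^{\mathsf T}D_{\mathbf s}A_+^{-\mathsf T}
\]
for a nonzero scalar $\kappa$.

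A direct computation should show $\kappa=1$. If bookkeeping leaves an honest scalar $\kappa$, the conclusion would be weaker, namely $\mathbf t=\kappa\,\sigma(\mathbf s)$, and this would contradict the statement for general $\mathbf t$. So the key point is to check $\kappa=1$ by chasing the four scalar products. One has
\[
\kappa=\frac{\lambda_+\mu_+\,\lambda_-\mu_-}{\lambda_+\mu_-\,\lambda_-\mu_+}=1,
\]
because each of $\lambda_\pm$ and $\mu_\pm$ appears once in the numerator and once in the denominator. So $D_{\mathbf t}$ is similar to $D_{\mathbf s}$. Both are diagonal, so they have the same multiset of eigenvalues, which means $\mathbf t$ is a permutation of $\mathbf s$.

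I expect the main obstacle to be justifying the reduction to the linear part, and getting the transpose and inverse conventions right in the block computation. The reduction rests on two facts. First, $\alpha(a)-\alpha_1(a)\in\mm^2$ for every arrow $a$. Second, any cycle in the quiver has length $4$ or $\geq6$. Together these show that the length-$4$ component of $\alpha(W)$ is $\alpha_1(W)$. Note that $\alpha_1$ need not preserve $\zeta$-weights, which is the whole point of the example; the argument above does not use weights at all.
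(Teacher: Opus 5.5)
Your proposal is correct and follows the paper's proof: reduce to the linear part of $\alpha$ acting on the length-$4$ terms, compare the four blocks of $\mathbf C_{\mathbf s}$ and $\mathbf C_{\mathbf t}$, and conclude that $D_{\mathbf s}$ and $D_{\mathbf t}$ are similar. The only difference is bookkeeping. The paper absorbs the scalars rescaling $p_\pm,q_\pm$ into the matrices $L_1,L_2,R_1,R_2$, so your check that $\kappa=1$ is automatic there; also, the absence of length-$5$ cycles is not needed, since $\alpha(W)-\alpha_1(W)\in\pi(\mm^5)$ regardless.
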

\begin{proof}
If $\mathbf t$ is obtained from $\mathbf s$ by a permutation of
coordinates, the corresponding simultaneous permutation of the indices
of $a_j^\pm$ and $b_j^\pm$ carries one potential to the other.

Conversely, suppose that a right equivalence $\alpha$ carries
$W(\mathbf t)$ to $W(\mathbf s)$. Since both potentials have ordinary
length four, only the linear part of $\alpha$ contributes to their
length-4 terms. The linear part of $\alpha$ preserves each of the four subspaces of the arrow space $\Bbbk Q_{1}$ spanned by the parallel-arrow families $\{a_j^+\}_{j=1}^3$, $\{a_j^-\}_{j=1}^3$, $\{b_j^+\}_{j=1}^3$, and $\{b_j^-\}_{j=1}^3$, and rescales the arrows $p_\pm,q_\pm$. Hence the induced map of $\alpha$ on $\mm^2/\mm^3$
preserves the four subspaces with respective bases
$\{u_j^+\}_{j=1}^3$,
$\{v_j^+\}_{j=1}^3$,
$\{u_j^-\}_{j=1}^3$, and
$\{v_j^-\}_{j=1}^3$ modulo $\mm^3$.
Using the path vectors $\mathbf u^\pm$ and $\mathbf v^\pm$ defined above,
there are invertible matrices $L_1,L_2,R_1,R_2\in\operatorname{GL}_3(\Bbbk)$
such that
\[
\alpha(\mathbf u^+)\equiv L_1\mathbf u^+,
\quad \alpha(\mathbf u^-)\equiv L_2\mathbf u^-,
\quad \alpha(\mathbf v^+)\equiv R_1\mathbf v^+,
\quad \alpha(\mathbf v^-)\equiv R_2\mathbf v^-
\tag*{$(\operatorname{mod}\,\mm^3)$\qquad}
\]
Comparing the length-4 terms on both sides of
$\alpha(W(\mathbf t))=W(\mathbf s)$ yields
\[
 \begin{pmatrix}L_1^{\mathsf T}&0\\0&L_2^{\mathsf T}\end{pmatrix}
 \mathbf C_{\mathbf t}
 \begin{pmatrix}R_1&0\\0&R_2\end{pmatrix}
 =\mathbf C_{\mathbf s}.
\]
Equating the four blocks gives
\[
 L_1^{\mathsf T}D_{\mathbf t}R_1=D_{\mathbf s},\qquad
 L_1^{\mathsf T}R_2=I_3,\qquad
 L_2^{\mathsf T}R_1=I_3,\qquad
 L_2^{\mathsf T}R_2=I_3.
\]
The last three identities imply $L_1=L_2$ and $R_1=R_2$. Hence
\[
 D_{\mathbf s}=L_1^{\mathsf T}D_{\mathbf t}
 (L_1^{\mathsf T})^{-1},
\]
so $D_{\mathbf s}$ and $D_{\mathbf t}$ are similar. Their eigenvalues,
counted with multiplicity, therefore agree, so $\mathbf t$ is
obtained from $\mathbf s$ by a permutation of coordinates.
\end{proof}

We now specialize to
\[
 \Phi=W(2,2,3),\qquad \Psi=W(2,3,2).
\]
By Lemma~\ref{lem:appendix-trace-formula}, both potentials are
Jacobi-finite and $\zeta$-homogeneous of weight $5$. Since the
parameter triples differ only by a permutation of coordinates,
Lemma~\ref{lem:appendix-right-equivalence} shows that $\Phi$ and
$\Psi$ are right equivalent. The formulas in
Lemma~\ref{lem:appendix-trace-formula} give
$H_{\zeta,\Phi}(t)=H_{\zeta,\Psi}(t)$, whereas
\[
 T_{\zeta,\Phi}(t)=6+5t^5,\qquad
 T_{\zeta,\Psi}(t)=6+t^4+3t^5+t^6.
\]
In particular,
\[
 a_\zeta^{\mathrm{tr}}(\Phi)=5\ne6
       =a_\zeta^{\mathrm{tr}}(\Psi).
\]
This shows that neither the trace Hilbert series nor its degree
is a right-equivalence invariant, even among Jacobi-finite
$\zeta$-homogeneous potentials of the same $\zeta$-weight.

\section*{Acknowledgments}
We thank Professor Osamu Iyama for raising the questions concerning the Jacobi-finiteness and uniqueness of rigid potentials in the fall of 2025. We are also grateful to Dr. Yuhang Cao of the Shanghai Center for Mathematical Sciences and Professor Christof Geiss of UNAM for helpful discussions. Zheng Hua was supported by University Grants Committee of Hong Kong under GRF grant no. 17305123. Guisong Zhou was supported by the National Natural Science Foundation of China (NSFC) under grant no. 12371039.

\section*{Declaration of AI Use}

The authors benefited from discussions with ChatGPT (OpenAI) during the development of this manuscript, including mathematical discussions and assistance with language editing  and LaTeX formatting. The authors take full responsibility for the  mathematical content, correctness, and integrity of the final manuscript,  including the verification of its statements, proofs, examples, and references.

%\bibliographystyle{alpha}
%\bibliographystyle{plain}
%\bibliographystyle{abbrv}

%\bibliography{main}

\Addresses

\end{document}